% third version for SIAM AG, small final changes (started 01/05/2025)

\documentclass{article}

% Language setting
% Replace `english' with e.g. `spanish' to change the document language
\usepackage[english]{babel}

% Set page size and margins
% Replace `letterpaper' with `a4paper' for UK/EU standard size
\usepackage[letterpaper,top=2cm,bottom=2cm,left=3cm,right=3cm,marginparwidth=1.75cm]{geometry}

% Useful packages
\usepackage{amsmath, amsfonts, amsthm,amssymb}
\usepackage{graphicx}
\usepackage[colorlinks=true, allcolors=blue]{hyperref}
\usepackage{mathtools} %used for := operator h spacing

% Package for blackboard bold 1
\usepackage{bbm}
\usepackage{comment}

% Packages to make "restatable theorems"
\usepackage{thmtools}
\usepackage{thm-restate}

% Package to do diagonal strikethrough
\usepackage{cancel}

% Added theorem styles
\newtheorem{theorem}{Theorem}
\newtheorem{lemma}[theorem]{Lemma}
\newtheorem{proposition}[theorem]{Proposition}
\newtheorem{corollary}[theorem]{Corollary}
\theoremstyle{definition}
\newtheorem{definition}[theorem]{Definition}

\newtheorem{question}[theorem]{Question}
\newtheorem{example}[theorem]{Example}

\newcommand{\R}{\mathbb{R}}
\newcommand{\C}{\mathbb{C}}
\newcommand{\N}{\mathbb{N}}
\newcommand{\Z}{\mathbb{Z}}

\DeclareMathOperator{\val}{val}

\DeclareMathOperator{\trop}{trop}
\DeclareMathOperator{\row}{row}
\DeclareMathOperator{\im}{im}
\DeclareMathOperator{\vol}{vol}
\DeclareMathOperator{\pos}{pos}
\DeclareMathOperator{\conv}{conv}

\newcommand{\RP}{\mathbb{R}\lbrace\!\lbrace t\rbrace\!\rbrace}
\newcommand{\CP}{\mathbb{C}\lbrace\!\lbrace t\rbrace\!\rbrace}

% For revision: use \rev{new text} to keep track of updates in the document. e.g. \rev{new thing}
\newcommand{\rev}[1]{\textcolor{black}{#1}}

\title{Tropical Toric Maximum Likelihood Estimation}
\author{Emma Boniface, Karel Devriendt and Serkan Ho\c{s}ten}

\begin{document}
\maketitle

\begin{abstract}
We consider toric maximum likelihood estimation over the
field of Puiseux series and study critical points of the likelihood function  using tropical methods. This problem translates to finding the intersection points of a tropical affine space with a classical linear subspace. We derive new structural results on tropical affine spaces and use these to give a complete and explicit description of the tropical critical points in certain cases. In these cases, we associate tropical critical points to the simplices in a regular triangulation of the polytope giving rise to the toric model.
\end{abstract}

\section{Introduction}\label{section: Introduction}
Algebraic statistics is concerned with the algebraic, geometric and combinatorial properties of statistical models. In the case of discrete distributions on a finite set $[n]=\lbrace 1,\dots,n\rbrace$, a statistical model corresponds to a subset $\mathcal{M}\subseteq\Delta_{n-1}$ of the probability simplex. If this model is parametrized by the image of a monomial map, it is called a log-linear or toric model \cite{Lectures_on_Alg_Stats, sullivant_2018_algebraic}.

Maximum likelihood estimation is a classical problem in statistics, which asks to find the distributions in the model $\mathcal{M}$ that best explain observed data. These distributions are called the maximum likelihood estimates (MLEs). In the discrete setting, the data is recorded in a vector $u\in\N^n$ and the MLEs are among the critical points $\hat{p}$ of the likelihood function
$$
\ell_u(p) = \frac{p_1^{u_1}\cdots p_n^{u_n}}{(p_1+\dots+p_n)^{\sum u_i}}$$ where $p$ ranges over $\overline{\mathcal{M}}_{reg} \setminus \mathcal{H}$, the regular locus of the projective closure $\overline{\mathcal{M}}$ of the model, minus the hypersurface $\mathcal{H}$ consisting of the poles and zeros of the likelihood function. The critical points of $\ell_u$ and their relation to the geometry of the model are a well-studied topic in algebraic statistics. In particular, the number of complex critical points is independent of the (generic) data $u$, and the resulting invariant is called the ML degree of the variety $\overline{\mathcal{M}}$ \cite{MLdegree, Solving_likelihood, huh_2014_likelihood}.
\\
~
\\
The tropical maximum likelihood estimation problem was introduced by Agostini et al. in \cite{agostini_2023_likelihood} and deals with finding the critical points of the likelihood function for data defined over the field $\RP$ of real Puiseux series. Elements in this field are formal power series $r=\sum_{i=1}^\infty c_it^{\alpha_i}$ with increasing rational exponents of bounded denominator. By thinking of $t$ as a small parameter, this setup can be used to model the asymptotic behavior of the maximum likelihood problem. The field of Puiseux series comes with the $t$-adic valuation $\val:r\mapsto\alpha_1$ that returns the smallest exponent in a given series. Starting from a data vector $u\in \RP^n$, the critical points of the likelihood function will lie in $(K^*)^n:=(\CP\backslash\lbrace 0\rbrace)^n$ and their asymptotic behavior is captured by their valuations. The goal of \emph{tropical maximum likelihood estimation} is to find the tropical critical points $\hat{q}:=\val(\hat{p})$ from the {tropical data vector} $w:=\val(u)$ directly. The adjective ``tropical'' reflects that this problem is naturally addressed using tools from tropical geometry \cite{maclagan_2015_introduction}. Throughout this paper we will assume that $w\geq0$ and there exists at least one index $i\in[n]$ for which $w_i = 0$. 

The tropical maximum likelihood estimation problem was solved for linear models by Agostini et al. \cite{agostini_2023_likelihood} and for a generalization to arbitrary matroids by Ardila, Eur and Penaguiao \cite{ardila_2023_tropical}. Both works find that the ML degree of the respective models is equal to Crapo's beta invariant of the matroid associated with the model. This reconfirms the ML degree computation for linear models found elsewhere; see \cite{MLdegree}. Furthermore, they find an explicit description of the multivalued map taking tropical data vector $w$ to tropical critical points $\hat{q}$ for some particular cases.
\\
~
\\
In this paper, we consider the tropical maximum likelihood estimation problem for toric models. A toric model is parametrized by an integer matrix 
$$ A = \begin{pmatrix} 
a_1 & a_2 & \cdots & a_n
\end{pmatrix}\in\Z^{k\times n}
$$
whose columns define a monomial map $\varphi_{A}$. We assume throughout the article that this matrix has full rank, \rev{that $\Z(A) = \Z^k$} and that its row span\rev{, denoted $\row(A)$,} contains the all ones vector. The monomial map gives rise to a projective toric variety $X_A = \overline{\im \varphi_A}\subseteq (K^*)^n \subset \mathbb{P}_K^{n-1}$ determined by the polytope $Q_A = \mathrm{conv}(a_i\mid i\in[n])$ and whose intersection with the probability simplex is a toric or log-linear statistical model. We recall \emph{Birch's theorem} which characterizes the MLE and the critical points of the likelihood function for toric models. 

\begin{theorem}[Birch's Theorem, {\cite[Ch. 7]{sullivant_2018_algebraic}}]  \label{theorem: Birch}
The MLE of the toric model $X_A \subset \C^n$ and data $u\in\R^n_{\geq 0}$ is the unique positive real point in the intersection of $X_A$ and $Y_{A,u}:= u + \ker(A)$. The critical points of $\ell_u$ on $X_A$ are the complex points in this intersection. When $u\in\RP^n$, the critical points of $\ell_u$ on $X_A \subset \CP^n$ are the intersection points with $Y_{A,u}$ in $\CP^n$.
\end{theorem}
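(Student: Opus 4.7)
The plan is to derive the critical point equations of $\ell_u$ restricted to $X_A$ by a direct computation in the toric parametrization, identify them with the affine-linear slice $Y_{A,u}$, and then invoke a standard concavity argument for uniqueness of the positive real MLE. Throughout I would write $\log \ell_u(p) = \langle u, \log p\rangle - |u|\log|p|$ with $|v|:=\sum_i v_i$, and parametrize $p = \varphi_A(\theta)$ on the dense torus of $X_A$.

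First I would compute the logarithmic derivatives $\theta_j\partial_{\theta_j}\log\ell_u(\varphi_A(\theta))$ for $j=1,\dots,k$. Using $\log\theta^{a_i} = \langle a_i,\log\theta\rangle$, a short calculation yields the Birch equations in the compact form
\[
Au \;=\; \frac{|u|}{|p|}\,Ap,
\]
equivalently $A\bigl(p - \tfrac{|p|}{|u|}u\bigr)=0$, so that after rescaling $p$ to have $|p|=|u|$ the critical point $p$ lies in $u+\ker A=Y_{A,u}$. To see that this rescaling stays inside $X_A$, I would invoke the assumption that $\mathbf{1}$ lies in the row span of $A$: writing $\mathbf{1}^T=c^T A$, one checks that $\lambda\cdot\varphi_A(\theta)=\varphi_A(\mu\cdot\theta)$ with $\mu_j=\lambda^{c_j}$, so $X_A$ is stable under coordinatewise scalar multiplication. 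Since $\ell_u$ is homogeneous of degree zero in $p$, the (complex) critical points of $\ell_u$ on $X_A$ coincide with the (complex) points of $X_A\cap Y_{A,u}$, proving the second assertion. The third assertion follows from the same derivation, as the critical equations are polynomial identities over whatever field we work with and thus carry over verbatim to $\CP$.

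For the first assertion, substitute $\theta_j=e^{x_j}$ and view $\log\ell_u\circ\varphi_A$ as a function of $x\in\R^k$. The first term $\langle Au, x\rangle$ is linear, while $|u|\log\sum_i e^{\langle a_i,x\rangle}$ is $|u|$ times the convex log-sum-exp function, so the whole expression is concave. Strict concavity holds in all directions transverse to the one-dimensional scaling direction $c$, so modulo this invariance there is a unique maximizer. Together with coercivity on the quotient (using $u\geq 0$ and full rank of $A$), this yields a unique positive real critical point of $\ell_u$ on $X_A$, which is therefore the MLE. The main subtlety I anticipate is handling the scaling direction and coercivity rigorously; the critical equation derivation itself is a routine Lagrangian calculation that transfers without change from $\R$ to $\RP$.
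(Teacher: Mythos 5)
The paper offers no proof of this statement---it is quoted verbatim from \cite[Ch.~7]{sullivant_2018_algebraic}---and your argument is essentially the standard one found there: the score equations in the toric parametrization reduce to $Au = \tfrac{|u|}{|p|}Ap$, homogeneity of $\ell_u$ together with $\mathbf{1}\in\row(A)$ lets one normalize to $|p|=|u|$ and land in $Y_{A,u}$, and strict concavity of the log-likelihood in the log-parameters (modulo the one-dimensional scaling direction) gives uniqueness of the positive real point. The one place to be careful is existence rather than uniqueness: coercivity transverse to the scaling direction requires $Au$ to lie in the relative interior of the cone spanned by the columns of $A$, which $u\geq 0$ alone does not guarantee but which holds for the strictly positive or generic data the paper works with---you correctly flag this as the remaining subtlety.
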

We consider toric varieties $cX_A$ scaled by generic coefficients $c\in (K^*)^n$ with $\val(c)=0$. This means that the $i$th monomial in the parametrization of $X_A$ is scaled by $c_i$. In this case, it is known that the ML degree equals the degree $\deg(X_A)$ of the toric variety \cite{amendola_2019_maximum, huh_2014_likelihood}. Our focus will lie on describing the intersection points rather than counting them. We now give an example to illustrate the setup.
\begin{example}[Independent binary random variables]
\label{example: binary random variables - classical solution}
We consider the maximum likelihood problem for
$$
A = \begin{pmatrix}
1&1&1&1\\
0&1&0&1\\
0&0&1&1
\end{pmatrix} \quad\text{~and~}\quad u=(1,t^2,t,t^4)\in\RP^4.
$$
The toric variety $X_A$ is the quadric hypersurface $V(\langle p_1p_4-p_2p_3\rangle)\subseteq (\CP^*)^4$ of singular $2\times 2$ matrices. The corresponding log-linear model $X_A\cap \Delta_3$ describes all distributions on $[4]\cong\lbrace 0,1\rbrace^2$ of two independent binary random variables, which are characterized by a singular joint distribution matrix. The critical points of the likelihood function are found by intersecting $X_A$ with the affine line 
$$
Y_{A,u}=u+\mu\cdot\ker(A)=(1+\mu\,,\, t^2-\mu\,,\, t-\mu\,,\,t^4+\mu) \text{~for $\mu\in \CP$}.
$$
Introducing generic coefficients $c_1,\dots,c_4$, the intersection points are found by solving the quadratic equation $c_2c_3\cdot (1+\mu)(t^4+\mu) - c_1c_4\cdot (t^2-\mu)(t-\mu)=0$ in $\mu$, which results in the $2=\deg(X_A)$ solutions
\begin{align*}
\hat{p}_1 = (1+\alpha\,,\,-\alpha\,,\,-\alpha\,,\,\alpha)\,+\,\dots \quad\text{~and~}\quad
\hat{p}_2 = (1\,,\, t^2\,,\, t\,,\, \beta \cdot t^3)\,+\,\dots
\end{align*}
where higher order terms in $t$ are omitted, and with $\alpha=(c_1c_4/c_2c_3-1)^{-1}$ and $\beta=c_1c_4/c_2c_3$. Taking valuations of these solutions, we find the tropical critical points $\hat{q}_1=(0,0,0,0)$ and $\hat{q}_2=(0,2,1,3)$.
\end{example}

\rev{One motivation for passing to data vectors over the Puiseux series is to model sequences of observations. For instance, in fictitious repeated experiments in the setting of Example \ref{example: binary random variables - classical solution}, one might observe a data sequence $\left(\begin{smallmatrix}
6 & 1\\3 & 0\end{smallmatrix}\right), \left(\begin{smallmatrix}70 & 9\\20 & 1\end{smallmatrix}\right),\left(\begin{smallmatrix}919 & 16\\63 & 2 \end{smallmatrix}\right),\left(\begin{smallmatrix}9834 & 38\\123 & 5 \end{smallmatrix}\right), \dots$, counting outcomes of two binary variables. After normalization, this sequence can be modeled approximately as $u(t)=\big(\begin{smallmatrix}
1&t^2\\t&t^4
\end{smallmatrix}\big)\in\RP^4$ by thinking of $t$ as a small parameter. This is a data vector over the Puiseux series, whose valuation reflects the asymptotic rates of the observed data sequence. The classical critical points, computed for each data point in the observation sequence, will form a sequence which can similarly be modeled as  vectors $\hat{p}_i(t)$ over the Puiseux series. Their valuations, namely, the tropical critical points, in turn correspond to the asymptotic rates of the classical critical points along the sequence of experiments.} 

The tropical toric maximum likelihood estimation problem asks to circumvent computing the classical intersection \rev{as in Theorem \ref{theorem: Birch}} and instead find the tropical critical points $\val(cX_A\cap Y_{A,u})$ directly from the tropical data vector $w$. Because the generically scaled intersection and the valuation map commute in this setting \cite[Theorem 3.6.1]{maclagan_2015_introduction}, this problem translates to finding the intersection points between 
$$
\trop(cX_A):=\overline{\lbrace \val(x)\,:\, x\in cX_A\rbrace}\quad\text{~and~}\quad\trop(Y_{A,u}):=\overline{\lbrace\val(y)\,:\,y\in Y_{A,u}\rbrace}
$$ 
where the Euclidean closure is taken inside $\mathbb{R}^n$. By the fundamental theorem of tropical geometry \cite[Theorem 3.2.5]{maclagan_2015_introduction}, these pointwise valuations are tropical varieties. The tropical toric variety $\trop(cX_A)=\row(A)$ is a classical linear subspace while the tropical affine space $L_{A,u}:=\trop(Y_{A,u})$ is a polyhedral complex whose combinatorial structure is governed by a matroid. The algebraic problem of intersecting a toric variety with an affine subspace is thus replaced by the combinatorial problem of intersecting a linear subspace with a polyhedral complex. Throughout the article, $e_i$ will denote the $i$th basis vector of $\R^n$, $e_\tau=\sum_{i\in\tau}e_i$ the indicator vector of a subset $\tau\subseteq[n]$, and $\pos(e_i \mid i\in\tau)= \{\sum_{i\in\tau}\lambda_ie_i \, : \,  \lambda_i\geq 0\}$ the cone with rays $(e_i)_{i\in\tau}$. We revisit Example \ref{example: binary random variables - classical solution} from the tropical perspective.

\begin{example}[Independent binary random variables revisited]\label{example: binary random variables - tropical solution}
We consider the tropical maximum likelihood estimation problem for the independent binary random variables model, with $A$ as defined in Example \ref{example: binary random variables - classical solution} and tropical data vector $w=(0,2,1,4)$ equal to $\val(1,t^2, t, t^4)$. The tropical toric variety is the linear subspace $\row(A)$ and the tropical affine space $L_{A,u}$ is a one-dimensional polyhedral complex in $\R^4$ with three vertices
$$
v_0=(0, \, 1, \, 1, \,1),\qquad v_1=(0,\, 0, \, 0, \, 0),\qquad v_2=(0,\, 2,\, 1,\, 2),
$$
two bounded line segments $\conv(v_0,v_1)$ and $\conv(v_0,v_2)$ and five cones
$$
v_0+\pos(e_3),  \qquad v_1+\pos(e_1),  \qquad v_2+\pos(e_2),  \qquad v_2+\pos(e_4),  \qquad 
v_1 - \pos(e_{[4]}).
$$
We computed this complex using \texttt{polymake} \cite{gawrilow_2000_polymake} based on Proposition \ref{proposition: homogenization}, which expresses $L_{A,u}$ as the coordinate subspace of a tropical linear space.
The polyhedral complex $L_{A,u}$ intersects $\row(A)$ in two points
$$
\hat{q}_1 = (0, \, 0, \, 0,\, 0),\qquad\hat{q}_2 = (0,\, 2,\, 1,\, 3),
$$
which lie on the vertex $v_1$ and the cone $v_2+\pos(e_4)$. These points correspond to the valuation of the critical points $\hat{p}_1$ and $\hat{p}_2$ computed in Example \ref{example: binary random variables - classical solution} by intersecting $cX_A$ and $Y_{A,u}$.  
\end{example}

Tropical affine spaces were studied by Braden et al. \cite{braden_2022_semismall} as ``augmented Bergman fans" and their connection to tropical geometry was made explicit in \cite[\S 5.3]{eur_2023_stellahedral}. The tropical affine spaces considered in this article are more general than augmented Bergman fans: the latter arise as $\val(Y_{A,u})$ with both $A$ and $u$ defined over fields with trivial valuation, while in our case the data lives in $u\in\RP^n$ with nontrivial valuation in general. This distinction is similar to the difference between Bergman fans and general tropical linear spaces arising from valuated matroids; see for instance \cite[\S 4.4]{maclagan_2015_introduction}. \rev{Analogously to the linear setting, augmented Bergman fans appear as recession fans of general tropical affine spaces \cite[Theorem 3.5.6 and Theorem 4.4.4]{maclagan_2015_introduction}.}
\\
~
\\
The tropical affine space $L_{A,u}$ is a pure $(n-k)$-dimensional polyhedral complex in $\R^n$ whose structure is determined by the matroid $M(A)$ associated with matrix $A$. This matroid records all $k$-subsets of $[n]$ for which the corresponding columns in $A$ are linearly independent; these are called bases and are denoted $\tau\in M(A)$. As our first contribution, we give a new structural result on tropical affine spaces by explicitly identifying \rev{a subset of the} cones in $L_{A,u}$ in terms of the data $A$ and $u$. 
\begin{restatable}{theorem}{conesLAu}\label{theorem: cones in LAu}
The tropical affine space $L_{A,u}$ contains the \rev{cones} $C_\tau:=w^{(\tau)}+\pos(e_i\mid i\in[n]\backslash\tau)$ for every basis $\tau\in M(A)$. Here, $w^{(\tau)}$ is a vertex of $L_{A,u}$ with coordinates
$$
\begin{cases}
w^{(\tau)}_j := \min\lbrace w_j\,,\, w_i\,:\, i\in [n]\setminus\tau\text{~s.t.~}\tau-j+i\in M(A) \rbrace &\textup{~for all $j\in\tau$}\\
w^{(\tau)}_i := \max\lbrace w_j^{(\tau)}  \,:\, j\in\tau\text{~s.t.~}\tau-j+i\in M(A)\rbrace &\textup{~for all $i\in[n]\backslash\tau$}
\end{cases}
$$
\end{restatable}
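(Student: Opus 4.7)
The plan is to exhibit, for every point $q$ of the proposed cone $C_\tau$, an explicit $y \in Y_{A,u}$ with $\val(y) = q$. Because $\tau \in M(A)$, the submatrix $A_\tau$ is invertible, and $Y_{A,u} = u + \ker(A)$ is parametrized by its $\tau^c$-coordinates via
\[
y_\tau \;=\; A_\tau^{-1}\bigl(Au - A_{\tau^c}\, y_{\tau^c}\bigr).
\]
Given $q = w^{(\tau)} + \sum_{i \notin \tau} \lambda_i e_i$ with $\lambda_i \ge 0$, I set $y_i := c_i\, t^{q_i}$ for $i \in \tau^c$ with scalars $c_i$ to be chosen generically, and define $y_\tau$ by the formula above. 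This gives $y \in Y_{A,u}$ with $\val(y_i) = q_i$ for $i \in \tau^c$ automatically, so the whole content of the theorem reduces to showing that $\val(y_j) = w^{(\tau)}_j$ for each $j \in \tau$.

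The key input is the classical matroid identity (via Cramer's rule) that $(A_\tau^{-1} A_{\tau^c})_{ji}$ is nonzero precisely when $\tau - j + i \in M(A)$. Writing $y = u + z$ with $z \in \ker(A)$ and using this sparsity yields
\[
y_j \;=\; u_j \;+\; \sum_{i \in C(\tau, j)} (A_\tau^{-1} A_{\tau^c})_{ji}\,\bigl(u_i - c_i\, t^{q_i}\bigr),
\qquad
C(\tau, j) := \bigl\{i \in \tau^c : \tau - j + i \in M(A)\bigr\}.
\]
The three types of summands have valuations $w_j$, $w_i$, and $q_i = w^{(\tau)}_i + \lambda_i$, respectively, all bounded below by $w^{(\tau)}_j$: the first two by the minimum defining $w^{(\tau)}_j$, and the third because the maximum defining $w^{(\tau)}_i$ ranges over a set containing $j$ whenever $i \in C(\tau, j)$. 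This yields the lower bound $\val(y_j) \ge w^{(\tau)}_j$.

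For equality, I argue that the coefficient of $t^{w^{(\tau)}_j}$ in $y_j$, viewed as a polynomial in the scalars $c_i$, does not vanish identically and therefore does not vanish for generic $c$. Each $i \in C(\tau, j)$ with $\lambda_i = 0$ and $w^{(\tau)}_i = w^{(\tau)}_j$ contributes $-(A_\tau^{-1} A_{\tau^c})_{ji}\, c_i$ to this leading coefficient, and the basis-exchange nonvanishing of $(A_\tau^{-1} A_{\tau^c})_{ji}$ prevents universal cancellation against the data-only part $u_j + \sum_{i \in C(\tau, j)} (A_\tau^{-1} A_{\tau^c})_{ji}\, u_i$; any remaining degenerate configurations (no such $i$, or accidental cancellation among data terms) are resolved either by a direct check or by invoking the closedness of $L_{A,u}$ together with a perturbation of $c$. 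This gives $\val(y) = q$, hence $C_\tau \subseteq L_{A,u}$, and varying $\lambda$ covers the whole cone. Finally, the vertex claim follows from $\dim C_\tau = n-k = \dim L_{A,u}$, so $C_\tau$ is a maximal cell with $w^{(\tau)}$ as its apex; the extra step of verifying that $w^{(\tau)}$ is a genuine zero-dimensional face of $L_{A,u}$ (rather than sitting in the relative interior of a larger face) requires ruling out a line through $w^{(\tau)}$ inside $L_{A,u}$. I anticipate the main obstacle to be precisely this combined genericity-and-vertex analysis, while the core algebraic identity and valuation lower bound are essentially forced by the matroid structure.
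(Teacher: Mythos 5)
Your direct parametrization $y_\tau = u_\tau + A_\tau^{-1}A_{\tau^c}(u_{\tau^c}-y_{\tau^c})$, combined with the Cramer's-rule observation that $(A_\tau^{-1}A_{\tau^c})_{ji}\neq 0$ exactly when $\tau-j+i\in M(A)$, is a legitimate and genuinely different route to the containment $C_\tau\subseteq L_{A,u}$: the paper instead homogenizes $L_{A,u}$ to a tropical linear space, computes the tropical Pl\"ucker vector of the free coextension $M(B^h)$, and uses the covering criterion of \cite[Lemma 4.4.6]{maclagan_2015_introduction}. Your valuation lower bound is correct, and the equality step is fine after the routine genericity bookkeeping you sketch (when no $c_i$-term sits at order $w^{(\tau)}_j$, the definition of $w^{(\tau)}_j$ as a minimum guarantees at least one data term at that exact order, and generic leading coefficients of $u$ prevent cancellation). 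Since you produce an actual preimage for every point of the cone, you get $C_\tau\subseteq\val(Y_{A,u})\subseteq L_{A,u}$ without even needing the closure.

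The genuine gap is the vertex claim, which is part of the theorem statement and is where essentially all of the paper's work lies (its two technical lemmas). Your argument ``$\dim C_\tau=n-k=\dim L_{A,u}$, so $C_\tau$ is a maximal cell with $w^{(\tau)}$ as its apex'' is false as stated: a full-dimensional cone contained in a pure polyhedral complex need not be a face of that complex. The maximal cell of $L_{A,u}$ containing the relative interior of $C_\tau$ is some polyhedron $P$ with affine hull $w^{(\tau)}+\mathrm{span}(e_i\mid i\in\sigma)$ and $P\supseteq C_\tau$, but nothing in your construction rules out $P\supsetneq C_\tau$ (e.g.\ $P=\tilde w+\pos(e_i\mid i\in\sigma)$ with $\tilde w\lneq w^{(\tau)}$), in which case $w^{(\tau)}$ would lie in the relative interior of a positive-dimensional face and would not be a vertex. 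Certifying the vertex requires an \emph{upper} bound, i.e.\ showing that the set of all $x$ with $x+\pos(e_i\mid i\in\sigma)\subseteq L_{A,u}$ is exactly $C_\tau$ and nothing more; exhibiting preimages can only prove membership in $L_{A,u}$, never non-membership or the finer face structure. This is precisely what the paper's Lemmas \ref{claim: one-dimensional inequalities imply all others} and \ref{claim: expression for vertex} accomplish, by translating membership into the system of inequalities $\langle e_\gamma,x\rangle-\pi_\gamma\leq\langle e_\sigma,x\rangle$ over all bases $\gamma$ of $M(B^h)$ and reducing it, via repeated symmetric basis exchanges, to the equalities $x_j=\pi_{\sigma+j}$ and the bounds $x_i\geq\max\{\pi_{\sigma+j}\}$ defining $w^{(\tau)}$. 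Your proposed fix of ``ruling out a line through $w^{(\tau)}$ inside $L_{A,u}$'' is also not the right criterion: vertexhood in the canonical polyhedral structure is about $w^{(\tau)}$ not lying in the relative interior of a single positive-dimensional cell, which you cannot detect without controlling which bases achieve the maximum at $w^{(\tau)}$, i.e.\ without something equivalent to the paper's analysis.
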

The main novelty in this theorem is the expression for the vertices $w^{(\tau)}$. We arrive at this result by homogenizing $L_{A,u}$ to a tropical linear space and then using certain properties that this space inherits from the homogenization procedure. We also study the endomorphism of $\R^n$ which takes a tropical data vector $w$ to a vertex $w^{(\tau)}$, and show that this map is continuous, piecewise linear, shift invariant, idempotent and contractive. 

Theorem \ref{theorem: cones in LAu} is relevant for the tropical maximum likelihood estimation problem because the intersection of $C_{\tau}$ and $\row(A)$ can be controlled by considering certain subdivisions of the polytope $Q_A \rev{ = \mathrm{conv}(a_i\mid i\in[n])}$. This leads to explicit expressions for some or all tropical critical points in terms of $w$. A polyhedral subdivision $\Delta$ of $Q_A$ is a collection of subsets of $[n]$, called cells, such that the polytopes $(\conv(a_i\mid i\in\sigma))_{\sigma\in\Delta}$ cover $Q_A$ with certain additional criteria; see Section \ref{section: subdivisions and tropical critical points} for further details. A regular subdivision $\Delta_\omega$ of $Q_A$ induced by the weight vector $(\omega_{i})_{i\in[n]}$ is the polyhedral subdivision whose cells correspond to the lower faces in the convex hull of  $\{(a_i,\omega_i)\}_{i\in[n]}\subseteq\mathbb{R}^{k+1}$. We say that $\tau$ lies in a cell if $\tau\subseteq\sigma$ for some cell $\sigma$. The normalized volume of $\tau$, $\vol_A(\tau)$, is equal to $|\det(A_\tau)|$.
\begin{restatable}{proposition}{ConeIntersection}\label{proposition:subdivision condition for cone intersection}
The linear subspace $\row(A)$ intersects $C_{\tau}$ if and only if $\tau$ lies in a cell of $\Delta_{-w^{(\tau)}}$. The intersection point, if it exists, is $\hat{q}(\tau):=A^T(A_{\tau}^T)^{-1}w^{(\tau)}_\tau$ and has multiplicity $\vol_A(\tau)$.
\end{restatable}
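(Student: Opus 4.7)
The plan is to parametrize $C_\tau \cap \row(A)$ by a single linear equation, translate the resulting feasibility condition into the cell condition for $\Delta_{-w^{(\tau)}}$, and deduce the multiplicity from a lattice index computation.

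First I would note that every point of $C_\tau$ has its $\tau$-coordinates equal to $w^{(\tau)}_\tau$, since the cone generators $e_i$ only involve indices in $[n]\setminus\tau$. Imposing $p = A^T y \in \row(A)$ therefore reduces to $A_\tau^T y = w^{(\tau)}_\tau$, which has the unique solution $y = (A_\tau^T)^{-1}w^{(\tau)}_\tau$ because $\tau$ is a basis of $M(A)$ and so $A_\tau$ is invertible. This shows that $\row(A) \cap C_\tau$ is either empty or equal to $\{\hat{q}(\tau)\}$, reducing the proposition to an analysis of when the remaining half-space constraints $\hat{q}(\tau)_i \geq w^{(\tau)}_i$ for $i \in [n]\setminus\tau$ are satisfied.

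To handle these constraints, I would set $z := (A_\tau^T)^{-1}w^{(\tau)}_\tau$, so that $\hat{q}(\tau)_i = \langle z, a_i\rangle$ and $\langle z, a_j\rangle - w^{(\tau)}_j = 0$ automatically for $j \in \tau$. The inequalities $\hat{q}(\tau)_i \geq w^{(\tau)}_i$ then say exactly that the linear functional $i \mapsto \langle z, a_i\rangle - w^{(\tau)}_i$ is minimized on some set $\sigma \supseteq \tau$, which is the lower-face certificate for $\sigma$ being a cell of $\Delta_{-w^{(\tau)}}$ with realizer $v = z$. For the converse, a cell $\sigma \supseteq \tau$ comes with a realizer $v$ and minimum value $m$ satisfying $\langle v, a_i\rangle - w^{(\tau)}_i \geq m$, with equality on $\sigma$; the $\tau$-equations $A_\tau^T v = w^{(\tau)}_\tau + m\mathbf{1}_\tau$ uniquely determine $v = z + mr$, where $r$ is chosen so that $A^T r = \mathbf{1}$ -- such $r$ exists by the standing hypothesis $\mathbf{1} \in \row(A)$. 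Substituting back and using $\langle r, a_i\rangle = 1$ collapses the remaining cell inequalities to $\hat{q}(\tau)_i \geq w^{(\tau)}_i$, giving $\hat{q}(\tau) \in C_\tau$.

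For the multiplicity, $\row(A)$ and the affine hull of $C_\tau$ have complementary dimensions $k$ and $n-k$ and meet transversally at $\hat{q}(\tau)$. The standard tropical stable intersection formula, applied with the trivial weight $1$ on maximal cones of the tropical linear space $L_{A,u}$, expresses the intersection multiplicity as the lattice index $[\Z^n : A^T\Z^k + \Z^{[n]\setminus\tau}]$. Assembling the rows of $A$ together with $\{e_i : i \in [n]\setminus\tau\}$ as columns of an $n \times n$ integer matrix and permuting rows so that $\tau$ appears first yields a block lower-triangular matrix with diagonal blocks $A_\tau^T$ and $I_{n-k}$, whose determinant is $\pm\det(A_\tau)$; hence the multiplicity equals $|\det(A_\tau)| = \vol_A(\tau)$.

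The step I expect to be most delicate is the converse direction in the second paragraph: producing the specific shift $v = z + mr$ hinges on the existence of $r$ with $A^T r = \mathbf{1}$, which is precisely where the hypothesis $\mathbf{1} \in \row(A)$ enters. The rest of the argument is then essentially linear algebra plus one clean lattice computation.
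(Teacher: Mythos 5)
Your proof is correct and follows essentially the same route as the paper: reduce to the unique intersection point of $\row(A)$ with the affine hull of $C_\tau$, identify the membership condition $\hat{q}(\tau)\geq w^{(\tau)}$ with the cell condition for $\Delta_{-w^{(\tau)}}$, and compute the multiplicity as the lattice index $\vert\det(A_\tau)\vert$. The only difference is that where the paper simply invokes Theorem \ref{theorem: definition of regular subdivision} for the equivalence with the cell condition, you re-derive that characterization inline via the lower-face certificate and the shift $v=z+mr$ (correctly using $\mathbbm{1}\in\row(A)$); this is a valid, self-contained substitute for the citation.
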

This proposition reflects the interaction between the geometry of the polytope $Q_A$ and the tropical data $w$ in the tropical toric maximum likelihood estimation problem. It leads to the following sufficient criterion that determines whether all tropical critical points lie on the cones $C_\tau$. A triangulation is a subdivision whose cells are simplices.
\begin{restatable}{theorem}{ConditionForAllIntersections}\label{theorem: tropical critical points from triangulation}
If $Q_A$ admits a regular triangulation $\Delta$ such that every maximal cell $\tau\in\Delta$ lies in a cell of $\Delta_{-w^{(\tau)}}$, then the tropical critical points are $\hat{q}(\tau)$, each with multiplicity $\vol_A(\tau)$, where $\tau$ runs over all maximal cells in $\Delta$.
\end{restatable}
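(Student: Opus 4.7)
The plan is to combine the cone description of $L_{A,u}$ from Theorem \ref{theorem: cones in LAu} with the intersection criterion in Proposition \ref{proposition:subdivision condition for cone intersection}, and then close the argument by a global multiplicity count. The key observation is that a regular triangulation partitions the normalized volume of $Q_A$ among its maximal simplices, and this total normalized volume equals the ML degree of the toric model, so if we can exhibit contributions to the tropical intersection summing to $\vol_A(Q_A)$, we must have accounted for every tropical critical point.

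First, I would note that since $\Delta$ is a triangulation of $Q_A$, every maximal cell $\tau \in \Delta$ has $|\tau| = k$ and the columns $(a_i)_{i\in\tau}$ are affinely independent; since $\row(A)$ contains the all-ones vector, these columns are in fact linearly independent and $\tau$ is a basis of $M(A)$. Theorem \ref{theorem: cones in LAu} then gives $C_\tau \subseteq L_{A,u}$, and the standing hypothesis that $\tau$ lies in a cell of $\Delta_{-w^{(\tau)}}$ triggers Proposition \ref{proposition:subdivision condition for cone intersection}, producing the intersection point $\hat{q}(\tau) \in C_\tau \cap \row(A)$ with tropical multiplicity $\vol_A(\tau)$.

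Next, I would sum up: the tropical intersection $\trop(cX_A) \cap L_{A,u}$ has total multiplicity equal to the ML degree, which for generically scaled toric models equals $\deg(X_A) = \vol_A(Q_A)$, as recalled before Example \ref{example: binary random variables - classical solution}. Since $\Delta$ triangulates $Q_A$, the standard additivity of normalized volume gives
$$
\sum_{\tau \textup{ maximal in }\Delta} \vol_A(\tau) \;=\; \vol_A(Q_A) \;=\; \deg(X_A).
$$
Thus the multiplicities collected from the cones $C_\tau$ already saturate the upper bound on the total tropical intersection multiplicity. It follows that $\row(A)$ meets $L_{A,u}$ only inside these cones, and the tropical critical points are exactly the points $\hat{q}(\tau)$ listed with multiplicities $\vol_A(\tau)$ (with multiplicities adding in the event that $\hat{q}(\tau) = \hat{q}(\tau')$ for distinct maximal cells).

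The step I expect to be most delicate is the last one: invoking the global multiplicity bound from tropical intersection theory to rule out \emph{additional} intersections in faces of $L_{A,u}$ outside the chosen cones $C_\tau$. This relies on the fact that the degree of the stable tropical intersection of $\trop(cX_A)$ and $\trop(Y_{A,u})$ agrees with the classical intersection degree, guaranteed here because the generic scaling $c$ ensures that $cX_A \cap Y_{A,u}$ is a finite transverse intersection and the valuation map commutes with the intersection. Once this equality of total multiplicities is in hand, the argument is a clean accounting: the cones $C_\tau$ contribute exactly $\vol_A(Q_A)$ to the intersection, leaving no room for further critical points elsewhere in $L_{A,u}$.
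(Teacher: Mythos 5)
Your overall strategy (exhibit one intersection point per maximal simplex, then close by comparing $\sum_\tau \vol_A(\tau)$ with the total intersection number $\vol_A(Q_A)$) is the same as the paper's, and the counting step at the end is fine. But there is a genuine gap in the middle: you treat the set-theoretic intersection point $\hat{q}(\tau)\in \row(A)\cap C_\tau$ from Proposition \ref{proposition:subdivision condition for cone intersection} as automatically contributing $\vol_A(\tau)$ to the \emph{stable} intersection $\row(A)\cap_{st}L_{A,u}$. By Proposition \ref{prop: computing multiplicity}, the lattice-index $\vol_A(\tau)$ is only the contribution of the face $C_\tau$ \emph{if that face is actually involved in the stable intersection}, i.e.\ if for the generic perturbation $v$ the shifted subspace $\varepsilon v+\row(A)$ still meets $C_\tau$ (in its relative interior) near $\hat{q}(\tau)$. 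This is not automatic: $\hat{q}(\tau)$ very often lies on the boundary of $C_\tau$ --- typically at the vertex $w^{(\tau)}$ itself, as in the Hirzebruch example where $\hat{q}(234)=w^{(234)}$ --- and a boundary intersection can vanish under perturbation. Without this, your inequality ``claimed contributions $\leq$ total multiplicity'' is not justified, and the saturation argument becomes circular; a priori some of your cones could fail to contribute while bounded faces or other cones of $L_{A,u}$ pick up the missing multiplicity.

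The paper closes exactly this gap by a specific choice of perturbation: take $v$ to be a weight vector with $\Delta=\Delta_v$. Since $\tau$ lies in a cell of $\Delta_{-w^{(\tau)}}$, it also lies in a cell of $\Delta_{-w^{(\tau)}+\varepsilon v}$ for small $\varepsilon>0$, which by the argument of Proposition \ref{proposition:subdivision condition for cone intersection} gives an honest intersection point $\hat{q}_\varepsilon(\tau)$ of $\varepsilon v+\row(A)$ with $C_\tau$, simultaneously for all maximal $\tau\in\Delta$, converging to $\hat{q}(\tau)$ as $\varepsilon\to 0$. Only then does the volume count $\sum_\tau\vol_A(\tau)=\vol_A(Q_A)$ rule out any further contributions. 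To repair your proof you need this (or an equivalent) perturbation step; note that the hypothesis that $\Delta$ is \emph{regular} is used precisely here, and your argument as written never uses regularity --- a sign that something is missing.
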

In general such a triangulation may not exist, but when it does, Theorem \ref{theorem: tropical critical points from triangulation} provides a full description of all tropical critical points in terms of the tropical data. We use this theorem to give complete and explicit solutions in two specific cases. Both cases again display interactions between the geometry of $Q_A$, the combinatorics of $M(A)$ and the choice of tropical data $w$. Let $\mathcal{O}(w):=\lbrace i\in[n]\,:\, w_i=0\rbrace$ be the indices where the tropical data is minimal. As a first result, we show that the tropical critical points are trivial when this minimal set contains a basis.
\begin{restatable}{theorem}{OContainsBasis}\label{theorem: O contains basis zero solutions}
If $\mathcal{O}(w)$ contains a basis of $M(A)$, then $\hat{q}=0$ is the unique tropical critical point \rev{and it has} multiplicity $\vol_A(Q_A)$.
\end{restatable}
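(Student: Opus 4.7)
The plan is to reduce to Theorem~\ref{theorem: tropical critical points from triangulation} by establishing that the hypothesis forces $w^{(\tau)}=0$ for \emph{every} basis $\tau\in M(A)$. Once this is proven, the sufficient condition of that theorem is satisfied vacuously by any regular triangulation of $Q_A$, and the conclusion follows.

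The crux is a short basis-exchange argument. Fix a basis $\tau_0 \subseteq \mathcal{O}(w)$ guaranteed by the hypothesis, and let $\tau$ be any basis of $M(A)$. For $j \in \tau$: if $j \in \tau_0$ then $w_j = 0$ directly; if $j \in \tau \setminus \tau_0$, the basis-exchange axiom applied to $\tau$ and $\tau_0$ furnishes an $i \in \tau_0 \setminus \tau \subseteq [n]\setminus\tau$ with $\tau - j + i \in M(A)$, and $i \in \mathcal{O}(w)$ forces $w_i=0$. In either case the set over which the minimum defining $w^{(\tau)}_j$ in Theorem~\ref{theorem: cones in LAu} is taken contains a zero entry; combined with $w\geq 0$ this yields $w^{(\tau)}_j = 0$ for all $j\in\tau$. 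For $i\in[n]\setminus\tau$, the standing assumption that the all-ones vector lies in the row span of $A$ precludes zero columns, so $i$ is not a loop of $M(A)$; hence some $j\in\tau$ satisfies $\tau-j+i\in M(A)$ and the max formula yields $w^{(\tau)}_i = 0$. Therefore $w^{(\tau)} = 0$.

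With $w^{(\tau)} = 0$ identically, the regular subdivision $\Delta_{-w^{(\tau)}}$ collapses to the trivial subdivision of $Q_A$ whose only cell is $[n]$. Choose any regular triangulation $\Delta$ of $Q_A$, which exists by taking a generic height function on $\{a_1,\ldots,a_n\}$. Every maximal cell $\tau\in\Delta$ then trivially lies in the unique cell of $\Delta_{-w^{(\tau)}}$, so Theorem~\ref{theorem: tropical critical points from triangulation} applies and identifies the tropical critical points as $\hat{q}(\tau) = A^T(A_\tau^T)^{-1} w^{(\tau)}_\tau = 0$, one for each maximal $\tau\in\Delta$, each with multiplicity $\vol_A(\tau)$. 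Summing these multiplicities gives $\sum_\tau \vol_A(\tau) = \vol_A(Q_A)$, so $\hat{q}=0$ is the unique tropical critical point with the claimed multiplicity.

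The proof is essentially a clean application of the machinery built up in the preceding results; the main substantive step is the matroid-exchange calculation, and the single easy-to-miss ingredient is the no-loop consequence of the all-ones hypothesis, needed to ensure that the set defining $w^{(\tau)}_i$ is nonempty.
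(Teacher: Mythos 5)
Your proposal is correct and follows essentially the same route as the paper: both show that $w^{(\tau)}=0$ for every maximal simplex of an arbitrary regular triangulation via a basis exchange between $\tau$ and a basis contained in $\mathcal{O}(w)$ (the paper packages this as ``$\tau$ has an $\mathcal{O}(w)$-basis exchange''), and then invoke Theorem~\ref{theorem: tropical critical points from triangulation} with the trivial subdivision $\Delta_{0}$. Your write-up merely spells out the exchange argument and the no-loop point in more detail than the paper does.
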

For the second result, consider a matrix $A$ whose columns are  in general position. Here, all $k$-subsets of columns form a basis and $M(A)$ is a uniform matroid. In this setting, we find a complete solution for the tropical maximum likelihood problem if some further restrictions on the ``spread'' of the $k$ smallest entries of the tropical data vector $w$ are satisfied.
\begin{restatable}{theorem}{SolutionForUniformMatroids}\label{theorem: solution for uniform matroids}
For $M(A)$ uniform of rank $k$, the following holds:
\begin{enumerate}
    \item If $\mathcal{O}(w)$ is not a face of $Q_A$, then the \rev{unique} tropical critical point is $\hat{q}=0$ with multiplicity $\vol_A(Q_A)$.
    
    \item If $\mathcal{O}(w)$ is a face of $Q_A$, then there exists a constant $c_{A,\mathcal{O}}\geq 1$ such that for all tropical data vectors that have $k$ entries with $w_j\leq c_{A,\mathcal{O}}\cdot\min\{w_i:w_i>0\}$, the tropical critical points are $\hat{q}(\tau)$, each with multiplicity $\vol_A(\tau)$, where $\tau$ runs over all maximal cells in an \rev{arbitrary} regular triangulation that refines $\Delta_{e_{\mathcal{O}(w)}}$.
\end{enumerate}
\end{restatable}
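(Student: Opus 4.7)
The plan is to derive both parts by verifying the hypothesis of Theorem \ref{theorem: tropical critical points from triangulation}, namely by producing a regular triangulation $\Delta$ of $Q_A$ each of whose maximal cells $\tau$ lies in a cell of $\Delta_{-w^{(\tau)}}$. The first step is to specialize Theorem \ref{theorem: cones in LAu} to the uniform case: since $\tau - j + i$ is automatically a basis for every $j \in \tau$ and $i \notin \tau$, the formulas collapse to
\[
w^{(\tau)}_j = \min(w_j, m_\tau) \text{ for } j \in \tau, \qquad w^{(\tau)}_i = \min(M_\tau, m_\tau) \text{ for } i \in [n] \setminus \tau,
\]
with $m_\tau := \min_{i \notin \tau} w_i$ and $M_\tau := \max_{j \in \tau} w_j$. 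This yields a crucial dichotomy: if $\mathcal{O}(w) \not\subseteq \tau$ then $m_\tau = 0$, so $w^{(\tau)} = 0$ (using $w \geq 0$); this forces $\hat{q}(\tau) = 0$ and the cone condition at $\tau$ holds automatically since $\Delta_{-w^{(\tau)}} = \Delta_0$ is the trivial subdivision.

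For Part 1, if $|\mathcal{O}(w)| \geq k$ then uniformity implies $\mathcal{O}(w)$ contains a basis and Theorem \ref{theorem: O contains basis zero solutions} applies directly. If $|\mathcal{O}(w)| < k$ and $\mathcal{O}(w)$ is not a face of $Q_A$, I would construct a regular triangulation $\Delta$ whose maximal cells never contain $\mathcal{O}(w)$ as a subset. Such a triangulation is produced by a weight vector $\omega$ lifting the points $\{a_i : i \in \mathcal{O}(w)\}$ high: when $|\mathcal{O}(w)| = 1$ the non-face hypothesis means $a_j$ is not a vertex of $Q_A$, so a sufficiently large lift excludes $j$ from every maximal cell, while for $|\mathcal{O}(w)| \geq 2$ generic distinct large lifts force each maximal cell to contain at most one index from $\mathcal{O}(w)$. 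The dichotomy then gives $\hat{q}(\tau) = 0$ for every maximal $\tau \in \Delta$, and the multiplicities sum to $\sum_\tau \vol_A(\tau) = \vol_A(Q_A)$ as desired.

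For Part 2, fix any regular triangulation $\Delta$ refining $\Delta_{e_{\mathcal{O}(w)}}$. Maximal cells $\tau$ with $\mathcal{O}(w) \not\subseteq \tau$ are handled by the dichotomy. For the remaining cells, $\mathcal{O}(w) \subseteq \tau$, and Proposition \ref{proposition:subdivision condition for cone intersection} reduces the hypothesis to $\hat{q}(\tau)_i \geq \min(M_\tau, m_\tau)$ for all $i \in [n] \setminus \tau$. Expanding $\hat{q}(\tau) = A^T (A_\tau^T)^{-1} w^{(\tau)}_\tau$ and using that $w^{(\tau)}_j = 0$ on $\mathcal{O}(w)$ and $w^{(\tau)}_j \leq c_{A,\mathcal{O}} \cdot \min\{w_i : w_i>0\}$ on the remaining indices of $\tau$ (by the $k$ small-entry hypothesis on $w$), the required inequality becomes a linear condition. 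The constant $c_{A,\mathcal{O}}$ is then extracted from a uniform operator-norm bound on the matrices $A^T (A_\tau^T)^{-1}$ ranging over the finitely many bases $\tau$ that can occur in any regular triangulation refining $\Delta_{e_{\mathcal{O}(w)}}$.

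The main obstacle is Part 2: the inequality $\hat{q}(\tau)_i \geq \min(M_\tau, m_\tau)$ must hold uniformly over all such triangulations and all tropical data with $k$ small entries, even when the set of small entries does not coincide with $\tau$ but straddles $\tau$ and $[n] \setminus \tau$. Controlling this requires a careful analysis tying the linear combination defining $\hat{q}(\tau)_i$ to the scalar $\min(M_\tau, m_\tau)$, exploiting the geometric placement of $\tau$ inside a cell of $\Delta_{e_{\mathcal{O}(w)}}$ to ensure the constant $c_{A,\mathcal{O}}$ can be chosen independently of the specific refining triangulation.
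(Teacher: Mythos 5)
Your overall strategy is the same as the paper's: specialize $w^{(\tau)}$ to the uniform case (your closed form $w^{(\tau)}_j=\min(w_j,m_\tau)$ for $j\in\tau$ and $w^{(\tau)}_i=\min(M_\tau,m_\tau)$ for $i\notin\tau$ is correct and a nice simplification), observe that $\mathcal{O}(w)\not\subseteq\tau$ forces $w^{(\tau)}=0$, and feed a suitable regular triangulation into Theorem \ref{theorem: tropical critical points from triangulation}. However, there are two genuine gaps. In Part 1 with $\vert\mathcal{O}(w)\vert\geq 2$, the claim that generic distinct large lifts of $\lbrace a_i : i\in\mathcal{O}(w)\rbrace$ force each maximal cell to contain at most one index of $\mathcal{O}(w)$ is false: for the unit square with a fifth point at its center and $\mathcal{O}(w)=\lbrace 1,2,5\rbrace$ ($1,2$ adjacent vertices, $5$ the center), the lifted center drops off the lower hull and some maximal triangle necessarily contains both $1$ and $2$. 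More seriously, your argument for $\vert\mathcal{O}(w)\vert\geq 2$ never invokes the hypothesis that $\mathcal{O}(w)$ is not a face, yet this hypothesis is indispensable: if $\mathcal{O}(w)$ is an edge of a polygon, \emph{every} triangulation has a maximal cell containing it. What you actually need is only that no maximal cell contains all of $\mathcal{O}(w)$, and the paper proves this by taking any regular triangulation refining $\Delta_{e_{\mathcal{O}(w)}}$ and arguing by duality: if some basis $\tau\supseteq\mathcal{O}(w)$ lay in a cell, Theorem \ref{theorem: definition of regular subdivision} would give $A^T(A_\tau^T)^{-1}(e_{\mathcal{O}(w)})_\tau\leq e_{\mathcal{O}(w)}$, so the linear functional $\langle (A_\tau^T)^{-1}(e_{\mathcal{O}(w)})_\tau,\cdot\rangle$ would be maximized exactly on $\mathcal{O}(w)$, exhibiting it as a face. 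That step is missing from your argument.

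In Part 2 the proof stops exactly where the work is: you neither construct $c_{A,\mathcal{O}}$ nor show $c_{A,\mathcal{O}}\geq 1$, and a ``uniform operator-norm bound on $A^T(A_\tau^T)^{-1}$'' is the wrong tool. The needed inequality $\hat{q}(\tau)_i\geq w^{(\tau)}_i$ is one-sided and the entries $(A^T(A_\tau^T)^{-1})_{ij}$ can be negative, so one must split $\tau\setminus\mathcal{O}(w)$ by the sign of these entries, bound the positive part below by $\min\lbrace w_z>0\rbrace$ and the negative part by $W=c_{A,\mathcal{O}}\cdot\min\lbrace w_z>0\rbrace$, arriving at a ratio $c(A,\mathcal{O},\Delta,\tau,i)=\sum_{\mathcal{J}_+}(A^T(A_\tau^T)^{-1})_{ij}\,/\,\bigl(1-\sum_{\mathcal{J}_-}(A^T(A_\tau^T)^{-1})_{ij}\bigr)$ whose minimum over the finitely many $(\Delta,\tau,i)$ defines $c_{A,\mathcal{O}}$. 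Showing this is at least $1$ is again where the face hypothesis enters: since $\tau$ lies in a cell of $\Delta_{e_{\mathcal{O}(w)}}=\Delta_{-e_{[n]\setminus\mathcal{O}(w)}}$, the row sums of $A^T(A_\tau^T)^{-1}$ over $\tau\setminus\mathcal{O}(w)$ are at least $1$. You correctly flag this as ``the main obstacle,'' but flagging it is not resolving it, so the proof is incomplete precisely at the two points where the face/non-face dichotomy must be used.
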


\begin{example}[Independent binary random variables revisited]\label{example: illustration for uniform matroids} The matrix $A$ in Example \ref{example: binary random variables - classical solution} corresponds to a configuration of four points in the plane whose convex hull $Q_A$ is the unit square.
$$
A \,=\, \begin{pmatrix}
1&1&1&1\\0&1&0&1\\0&0&1&1
\end{pmatrix}\quad\quad Q_A \,\,=\,\, \raisebox{-0.35\height}{\includegraphics[width=0.08\textwidth]{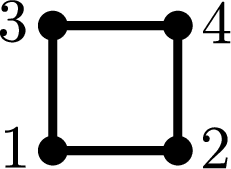}}
$$
The matroid $M(A)$ is uniform and for tropical data $w=(0,2,1,4)$ we find that $\mathcal{O}(w)=\lbrace 1\rbrace$ is a face of $Q_A$. Using details from the proof of Theorem \ref{theorem: solution for uniform matroids} in Section \ref{section: subdivisions and tropical critical points}, we compute the constant $c_{A,\mathcal{O}}=2$. Since $w$ has $k=3$ entries smaller than or equal to $c_{A,\mathcal{O}}\cdot\min\{w_j>0\}=2\cdot 1$, namely $(w_1,w_2,w_3)=(0,2,1)$, Theorem \ref{theorem: solution for uniform matroids} applies. The computation of the tropical critical points based on the simplices $\tau_1=123$ and $\tau_2=234$ in the triangulation $\Delta_{e_1}$ is shown below. 
$$
\raisebox{-0.35\height}{\includegraphics[width=0.085\textwidth]{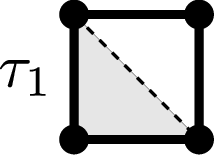}} 
 \quad \begin{array}{c}
w^{(\tau_1)}=(0,2,1,2)\\
\hat{q}(\tau_1)=(0,2,1,3)
\end{array}\qquad\qquad\qquad
\raisebox{-0.35\height}{\includegraphics[width=0.085\textwidth]{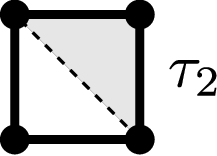}} 
 \quad \!\!\begin{array}{c}
w^{(\tau_2)}=(0,0,0,0)\\
\hat{q}(\tau_2)=(0,0,0,0)
\end{array}
$$
\end{example}
~\\
\noindent \textbf{Organization.}
The rest of this article is organized as follows: Section \ref{section: tropical toric MLE} formally introduces the tropical toric maximum likelihood problem and the necessary background on tropical geometry. In Section \ref{section: tropical affine spaces}, we study in more detail the tropical affine space $L_{A,u}$ and prove our main structural result, Theorem \ref{theorem: cones in LAu}. We then use this result in Section \ref{section: subdivisions and tropical critical points} to show how certain subdivisions of the polytope associated with the toric model can be used to find the tropical critical points. This leads to a complete solution of the tropical toric maximum likelihood problem in specific cases. Section \ref{section: further examples} contains a number of examples to illustrate and apply the main results. In Section \ref{section: tIPS} we consider a tropicalization of the classical ``iterative proportional scaling'' algorithm (IPS) to compute toric MLEs. We introduce the tropical IPS and discuss some experimental observations. \rev{Finally, in Section \ref{section: general position} we show that when the columns of $A$ are in general position, Theorem \ref{theorem: tropical critical points from triangulation} applies for a full-dimensional set of tropical data vectors $w$. }
\\~\\
\textbf{Acknowledgements} 
We thank Bernd Sturmfels for proposing the problem that lead to this paper.
%%%

%%%
\section{The tropical toric maximum likelihood estimation problem}\label{section: tropical toric MLE}
In this section we formally introduce the tropical toric maximum likelihood estimation problem. This problem is given by two pieces of data:
\begin{itemize}
    \item[(1)] An integer matrix $A\in\mathbb{Z}^{k\times n}$ of full rank $k$ and with $(1,\dots,1)\in\row(A)$.
    \item[(2)] A data vector $u\in \RP^n$ which we assume to be generic and to satisfy $\val(\sum_{i=1}^n u_i) = 0$.
\end{itemize}
Classically, the maximum likelihood estimation problem is based on data defined over $\R$ and asks to identify the complex critical points of the likelihood function $\ell_u$. In our setting, the data is defined over the field of real Puiseux series $\RP$ and the critical points will lie in its algebraic closure $K:=\CP$. Elements of $\RP$ are formal power series $\sum_{i=1}^{\infty}c_it^{\alpha_i}$ with coefficients in $\R$ and rational exponents $\alpha_1<\alpha_2<\dots$ that have a bounded common denominator. This field comes with a valuation $\val:\RP\rightarrow \R\cup\{\infty\}$, defined as
$$
\val\big(\,\sum_{i=1}^\infty c_it^{\alpha_i}\big) \,=\, \min\lbrace \infty\,,\,\alpha_i : i\geq 0\text{~s.t.~}c_i\neq 0\rbrace.
$$ 
See \cite[Chapter 2.1]{maclagan_2015_introduction} for more details. We recall from Birch's theorem that the critical points for a toric model are found by intersecting the underlying toric variety with an affine subspace. This still applies when working over $\CP$. More precisely, our varieties live in the algebraic torus $(K^*)^n$ with $K^*=K\setminus\{0\}$, and are defined by ideals in the ring of Laurent polynomials $K[x_1^{\pm 1},\dots,x_n^{\pm 1}]$. We use the standard abbreviation $x^{v}=x_1^{v_1}\cdots x_{n}^{v_n}$ for a monomial with exponent vector $v\in\Z^n$.

\begin{definition}
The scaled toric variety $cX_A=V(I_{A,c})\subseteq (K^*)^n$ is defined by the binomial ideal
$$
I_{A,c}:=\left\langle c^b\cdot x^a-c^a\cdot x^b\,:\, a,b\in\N^n\text{~s.t.~}A(a-b)=0 \right\rangle\subseteq K[x_1^{\pm 1},\,\dots,\,x_n^{\pm 1}].
$$
Here, we assume that the coefficients $c\in(K^*)^n$ are generic and have $\val(c)=0$. The affine subspace $Y_{A,u}=V(J_{A,u})\subseteq(K^*)^n$ is defined by the inhomogeneous ideal
$$
J_{A,u} := \langle f-f(u)\,:\,f\in\row(A)\rangle \subseteq K[x_1^{\pm 1},\,\dots,\,x_n^{\pm 1}].
$$
\end{definition}
By Birch's theorem, the critical points of the likelihood function on $cX_A$ are the intersection points of $cX_A$ with the affine subspace $Y_{A,u}$. These are points in $(K^*)^n$ and thus have valuations in $\mathbb{R}^n$.
\begin{definition}
The \textit{tropical critical points} of $cX_A$ and data $u \in \RP^n$ are the vectors $\hat{q}=\val(\hat{p})\in\mathbb{R}^n$, where $\hat{p}$ runs over all intersection points of $cX_A$ and $Y_{A,u}$ in $(K^*)^n$. The number of times a given tropical critical point $\hat{q}$ appears as the valuation of an intersection point is called its \textit{multiplicity}.
\end{definition}
The main goal of \emph{tropical toric maximum likelihood estimation} is to find the tropical critical points \rev{and their multiplicities} directly from the valuation \rev{$w=\val(u)$ of the data vector; we call this third piece of data the tropical data vector:
\begin{itemize}
    \item[(3)] A tropical data vector $w\in\R^n$ is the valuation of a data vector $u$ and satisfies $\min_{i\in [n]}w_i = 0.$
\end{itemize}}
We now introduce the relevant background on tropical geometry; see \cite{maclagan_2015_introduction} for details. For a Laurent polynomial $f=\sum_{a\in\mathbb{Z}^n}c_ax^a$ in $K[x_1^{\pm 1},\dots,x_n^{\pm 1}]$, the tropical hypersurface $\trop(V(f))$ is defined as
$$
\trop(V(f)) := \Big\lbrace x \in \R^n : \text{~the minimum is achieved twice in~}\min_{a\in\mathbb{Z}^n} (\langle a,x\rangle + \val(c_a))\Big\rbrace
$$
with $\langle\cdot,\cdot\rangle$ the standard inner product on $\R^n$. These are the building blocks of tropical varieties.
\begin{definition}
Let $X$ be a variety in the algebraic torus defined by the ideal $I\subseteq K[x_1^{\pm 1},\dots,x_n^{\pm 1}]$. The \textit{tropicalization} of $X$ is defined as
$ \trop(X) := \bigcap_{f\in I}\trop(V(f))\subseteq\R^n$. A tropical variety is any subset of $\R^n$ that arises as the tropicalization of a subvariety of $(K^*)^n$.
\end{definition}
\emph{The fundamental theorem of tropical algebraic geometry} bridges classical algebraic varieties and their tropical counterparts through valuations. 
\begin{theorem}[{\cite[Theorem 3.2.5]{maclagan_2015_introduction}}]
Let $I$ be an ideal in $K[x_1^{\pm 1},\dots,x_n^{\pm 1}]$ and $X=V(I) \subset (K^*)^n$ be the variety defined by $I$. Then the following subsets of $\R^n$ coincide:
\begin{enumerate}
    \item The tropical variety $\trop(X)$ and
    \item The Euclidean closure $ \overline{\lbrace\val(x)\,:\, x\in X\rbrace}$ of pointwise valuations of $X$.
\end{enumerate}
\end{theorem}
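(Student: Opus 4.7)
The plan is to establish the two inclusions separately, the standard approach for this bridge between algebraic and tropical geometry.

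For the easy inclusion $\overline{\{\val(x):x\in X\}}\subseteq\trop(X)$, I would argue directly from the definitions. Fix $x\in X$ and any $f=\sum_a c_a x^a\in I$. Since $f(x)=0$ in $K$, the valuations $\val(c_a x^a)=\val(c_a)+\langle a,\val(x)\rangle$ of the terms cannot have a unique minimum: were some exponent $a^*$ to strictly dominate, the sum $f(x)$ would inherit the finite valuation of $c_{a^*}x^{a^*}$, contradicting $f(x)=0$. Hence the minimum is attained at least twice, so $\val(x)\in\trop(V(f))$ for every $f\in I$, and $\val(x)\in\trop(X)$. Since each $\trop(V(f))$ is closed, so is $\trop(X)$, and the Euclidean closure of $\{\val(x):x\in X\}$ still sits inside it.

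For the hard inclusion $\trop(X)\subseteq\overline{\{\val(x):x\in X\}}$, since both sides are closed it suffices to realize every rational point $w\in\trop(X)\cap\Q^n$ as $\val(x)$ for some $x\in X$. The strategy is via initial ideals: to a weight $w$ one associates $\mathrm{in}_w(I)\subseteq k[x_1^{\pm 1},\dots,x_n^{\pm 1}]$ over the residue field $k=\C$, obtained by performing the monomial change of variables $x_i\mapsto t^{w_i}x_i$, extracting the leading $t$-coefficient of each element of $I$, and reducing modulo the maximal ideal. The central claim is that $w\in\trop(X)$ if and only if $\mathrm{in}_w(I)$ contains no monomial. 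Granted this, a Hilbert Nullstellensatz argument produces a zero $y\in(k^*)^n$ of $\mathrm{in}_w(I)$, which can then be lifted via a Hensel-type deformation, using that $K=\CP$ is algebraically closed by Newton--Puiseux, to a point $x\in X$ with leading coefficient $y$ and $\val(x)=w$.

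The main obstacle is precisely this lifting step together with the equivalence between $w\in\trop(X)$ and $\mathrm{in}_w(I)$ being monomial-free. The easy direction of the equivalence follows from the first inclusion applied to monomial elements, but the reverse direction and the lifting both rely on the theory of Gr\"obner bases over valued fields, which requires a careful filtration argument and the preservation of finite generation when passing to initial ideals. This is the technical heart of the theorem and corresponds to Chapters 2--3 of \cite{maclagan_2015_introduction}; in the context of this paper I would quote the result rather than redevelop the machinery.
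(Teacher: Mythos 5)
The paper does not prove this theorem at all: it is quoted verbatim as the Fundamental Theorem of Tropical Algebraic Geometry with a citation to \cite[Theorem 3.2.5]{maclagan_2015_introduction}, and the authors never reprove it. Your sketch is therefore strictly more detailed than what the paper offers, and it faithfully reproduces the standard Maclagan--Sturmfels strategy: the easy inclusion via the ultrametric inequality (your argument there is complete and correct --- a unique minimizing term would force $\val(f(x))$ to be finite, contradicting $f(x)=0$), and the hard inclusion via the equivalence of $w\in\trop(X)$ with $\mathrm{in}_w(I)$ being monomial-free, a Nullstellensatz over the residue field $\C$, and a lifting argument. Two small points to keep honest if you were to write this out: (a) reducing to rational $w$ requires knowing that $\trop(X)\cap\Q^n$ is dense in $\trop(X)$, which is itself a consequence of the Gr\"obner complex structure and not merely of closedness of both sides; (b) in Maclagan--Sturmfels the lifting is carried out by an induction on dimension using projections rather than a direct Hensel deformation, though Hensel-type arguments do handle the hypersurface case. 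Since you explicitly defer the Gr\"obner machinery to the citation --- exactly as the paper does --- your proposal is appropriate in this context.
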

\rev{In the next corollary the intersection $\cap_{st}$ refers to the stable intersection of two tropical varieties as balanced polyhedral complexes \cite[Definition 3.6.5]{maclagan_2015_introduction}.}
\begin{corollary} \label{cor: intersection commutes with trop}
The set of tropical critical points of $cX_A$ for the data $u$ is equal to 
$$ \trop \left( cX_A \cap Y_{A,u}\right) \, = \, \trop(cX_A) \cap_{st} \trop(Y_{A,u}) \, = \, \row(A) \cap_{st} L_{A,u}.$$
Moreover, for generic data $u$, the number of the tropical critical points counted with multiplicity is equal to $\deg(cX_A) = \vol_A(Q_A)$.
\end{corollary}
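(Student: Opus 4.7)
The plan is to chain together Birch's theorem with two standard facts from tropical geometry: the fundamental theorem of tropical algebraic geometry (quoted just above the statement) and the stable intersection theorem of \cite[Theorem~3.6.1]{maclagan_2015_introduction}, and then to finish with the classical degree formula for projective toric varieties.

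First, I would unpack the left-hand equality. By Birch's theorem (Theorem~\ref{theorem: Birch}), the critical points of $\ell_u$ on $cX_A$ are precisely the points of $cX_A\cap Y_{A,u}$ inside $(K^*)^n$. For generic $u$ the two varieties have complementary dimensions in $(K^*)^n$, so their intersection is zero-dimensional and hence finite. In particular, the finite set $\{\val(\hat{p}):\hat{p}\in cX_A\cap Y_{A,u}\}$ is its own Euclidean closure, so by the fundamental theorem of tropical algebraic geometry it coincides with $\trop(cX_A\cap Y_{A,u})$. This is exactly what the definition of tropical critical point requires.

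Next, for the middle and right-hand equalities I would invoke \cite[Theorem~3.6.1]{maclagan_2015_introduction}, which states that when two subvarieties of the algebraic torus meet properly, the tropicalization of their intersection agrees with the stable intersection of their tropicalizations, with intersection multiplicities preserved. The complementary dimension count $\dim(cX_A)+\dim(Y_{A,u})=k+(n-k)=n$ together with the genericity of $c$ (with $\val(c)=0$) and of $u$ ensures that the intersection is a proper, transverse intersection and that the hypothesis of the stable intersection theorem is met. Combined with $\trop(cX_A)=\row(A)$, which is the standard tropicalization of a generically scaled projective toric variety \cite[\S 6.2]{maclagan_2015_introduction}, and $\trop(Y_{A,u})=L_{A,u}$ by definition of the tropical affine space, this yields the remaining two equalities in the chain.

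Finally, for the count with multiplicities, I would use the classical Bézout-type statement for projective toric varieties: $cX_A$ meets a generic linear subspace of complementary dimension in exactly $\deg(cX_A)$ points, counted with multiplicity, and a generic translate of $\ker(A)$ by $u$ provides such a subspace. Since $c$ has valuation zero, $\deg(cX_A)=\deg(X_A)$, and the classical identification $\deg(X_A)=\vol_A(Q_A)$ with the normalized volume of the defining polytope completes the count. The main delicate point throughout is verifying that the combined genericity of $c$ and $u$ really does force the intersection in $(K^*)^n$ to be proper and to transfer multiplicities under tropicalization; I would handle this by noting that the family of intersections parametrized by $(c,u)$ is flat over a Zariski-open locus of parameters, so generic fibers inherit the expected count.
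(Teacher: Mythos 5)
Your proposal is correct and follows essentially the same route as the paper: both equalities come from $\trop(cX_A)=\row(A)$, $\trop(Y_{A,u})=L_{A,u}$, and the stable intersection theorem \cite[Theorem 3.6.1]{maclagan_2015_introduction}, with the count reduced to $\deg(cX_A)=\vol_A(Q_A)$ (the paper simply cites \cite[Theorem 13]{amendola_2019_maximum} where you sketch a B\'ezout-type argument). One small imprecision: the hypothesis of the stable intersection theorem is that one variety is replaced by a \emph{generic multiplicative translate} (which is exactly what the generic scaling $c$ with $\val(c)=0$ supplies), rather than that the intersection is proper, so you should phrase the appeal to that theorem in terms of the genericity of $c$ as the paper does.
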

\begin{proof}
The first equality follows from the stable intersection theorem for tropical varieties \cite[Theorem 3.6.1]{maclagan_2015_introduction}. This theorem requires $c$ to be generic with $\val(c) = 0$ as we assume in our setting. From the monomial parametrization of $cX_A$ it follows that $\trop(cX_A)$ is equal to 
$\row(A)$, the row span of $A$, and $L_{A,u}=\trop(Y_{A,u})$ by definition. This gives the second equality. 
The last statement is a consequence of \cite[Theorem 13]{amendola_2019_maximum} and the genericity of $c$ and $u$.
\end{proof}
\begin{proposition} \label{prop: computing multiplicity}
If a tropical critical point in $\trop(cX_A \cap Y_{A,u})$ is obtained from
the intersection $\row(A) \cap_{st} \Gamma$ where $\Gamma$ is a maximal face of $L_{A,u}$, the contribution to its multiplicity is equal to the index of the lattice generated by the rows of $A$ and the lattice of integer vectors in the linear subspace parallel to the affine hull of $\Gamma$. The multiplicity of a tropical critical point is the sum of these contributions.
\end{proposition}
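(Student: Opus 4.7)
The plan is to apply the lattice-index formula for stable intersections of balanced polyhedral complexes, \cite[Definition 3.6.5]{maclagan_2015_introduction}. For two rational balanced polyhedral complexes $\Sigma_1, \Sigma_2 \subseteq \mathbb{R}^n$ of complementary dimension with maximal cells $\sigma_1, \sigma_2$ meeting transversally at an isolated intersection point $q$, the contribution of this pair to the multiplicity of $q$ in the stable intersection is
$$
m_1(\sigma_1) \cdot m_2(\sigma_2) \cdot [\mathbb{Z}^n : (\mathbb{Z}^n \cap \mathrm{lin}(\sigma_1)) + (\mathbb{Z}^n \cap \mathrm{lin}(\sigma_2))],
$$
where $m_i$ denotes the weight on $\Sigma_i$, and the total multiplicity at $q$ is the sum over all such contributing pairs. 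First I would isolate this formula and reduce the statement to a per-face computation.

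In our setting, $\trop(cX_A) = \row(A)$ is a single $k$-dimensional linear subspace, and as the tropicalization of the toric variety $cX_A$ defined by the integer matrix $A$ it carries weight $m_1 = [\mathbb{Z}^n \cap \row(A) : \row_\mathbb{Z}(A)]$ on this unique maximal cell (see \cite[Chapter 6]{maclagan_2015_introduction}). The tropical affine space $L_{A,u} = \trop(Y_{A,u})$, being the tropicalization of an affine translate of $\ker A$, carries weight $m_2(\Gamma) = 1$ on each maximal face $\Gamma$, as is standard for tropicalizations of affine linear spaces \cite[Section 4.4]{maclagan_2015_introduction}. Since $\trop(cX_A)$ has only one maximal cell, the total contribution at a tropical critical point $\hat q$ reduces to a sum over the maximal faces $\Gamma$ of $L_{A,u}$ whose closures contain $\hat q$.

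Substituting these weights into the formula and using transversality $\row(A) + \mathrm{lin}(\Gamma) = \mathbb{R}^n$, which forces $\row(A) \cap \mathrm{lin}(\Gamma) = \{0\}$, one obtains the elementary lattice identity
$$
m_1 \cdot [\mathbb{Z}^n : (\mathbb{Z}^n \cap \row(A)) + L_\Gamma^\mathbb{Z}] \;=\; [\mathbb{Z}^n : \row_\mathbb{Z}(A) + L_\Gamma^\mathbb{Z}],
$$
where $L_\Gamma^\mathbb{Z} := \mathbb{Z}^n \cap \mathrm{lin}(\Gamma)$. This identity is easily proved by checking that the natural map $\mathbb{Z}^n \cap \row(A) \to ((\mathbb{Z}^n\cap\row(A))+L_\Gamma^\mathbb{Z})/(\row_\mathbb{Z}(A)+L_\Gamma^\mathbb{Z})$ is surjective with kernel $\row_\mathbb{Z}(A)$, where the triviality of $\row(A) \cap \mathrm{lin}(\Gamma)$ enters. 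The resulting index on the right matches the claimed per-face contribution, and summation over $\Gamma$ completes the argument. The main subtlety is pinning down the tropicalization multiplicity $m_1$ of $\trop(cX_A)$ and verifying the lattice identity above; both are standard but require careful bookkeeping of saturated versus spanned lattices.
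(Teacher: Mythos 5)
Your overall strategy coincides with the paper's: both arguments reduce the claim to the stable-intersection multiplicity formula of \cite[Definition 3.6.5]{maclagan_2015_introduction}, note that $\trop(cX_A)=\row(A)$ consists of a single maximal cell so that the total multiplicity at $\hat q$ is a sum over the maximal faces $\Gamma$ of $L_{A,u}$ meeting $\hat q$, and assign weight $1$ to each such $\Gamma$ (the paper justifies this by observing that $\mathrm{in}_w(J_{A,u})$ is linear, hence prime). Your closing lattice identity is also correct as stated.

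The genuine issue is the weight you assign to $\trop(cX_A)$. You take $m_1=[\Z^n\cap\row(A):\row_{\Z}(A)]$, which is the degree of the monomial parametrization $\varphi_A$ onto its image; but the paper defines $cX_A$ as the reduced variety $V(I_{A,c})$, and the multiplicity of a maximal cell of a tropicalization is defined through the initial degeneration of the defining ideal (\cite[Definition 3.4.3]{maclagan_2015_introduction}), not through a parametrization. Since $\mathrm{in}_w(I_{A,c})=I_{A,c}$ for every $w\in\row(A)$ and $I_{A,c}$ is prime, the correct weight is $m_1=1$; this is exactly how the paper argues. With $m_1=1$, Definition 3.6.5 yields the index in $\Z^n$ of the \emph{saturated} lattice $\Z^n\cap\row(A)$ plus the lattice of integer vectors parallel to $\Gamma$, whereas your factor $m_1$ converts this into the index of the unsaturated sum $\row_{\Z}(A)+(\Z^n\cap\mathrm{lin}(\Gamma))$. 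The two answers agree precisely when $\row_{\Z}(A)$ is saturated in $\Z^n$, equivalently when the columns of $A$ generate $\Z^k$ as a lattice; this is the setting implicitly assumed throughout the paper, and it is also what makes $\deg(cX_A)=\vol_A(Q_A)$ in Corollary \ref{cor: intersection commutes with trop}. So in the intended setting your extra factor is $1$ and your final formula is the right one, but the justification you give for $m_1$ does not match the definition of multiplicity for the tropicalization of the reduced toric variety, and outside the saturated case it would overcount the intersection points of $V(I_{A,c})$ with $Y_{A,u}$.
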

\begin{proof}
Following \cite[Proposition 3.6.12]{maclagan_2015_introduction}, the stable intersection $\row(A) \cap_{st} L_{A,u}$ can be computed  via 
$$\lim_{\varepsilon \rightarrow 0} (\varepsilon v + \row(A)) \cap L_{A,u}$$
where $v \in \R^n$ is general. 
If the maximal face $\Gamma$ of $L_{A,u}$ is involved in this intersection, the multiplicity of the corresponding stable intersection point is computed as follows. According to Definition 3.4.3 in \cite{maclagan_2015_introduction} one first needs to determine the multiplicities of the associated primes of the initial ideal $\mathrm{in}_w(J_{A,u})$, for any $w$ in the relative interior of $\Gamma$. Since $J_{A,u}$ is a linear ideal, so is the initial ideal $\mathrm{in}_w(J_{A,u})$ and thus it is prime. This means that its multiplicity is equal to one. Similarly, $\mathrm{in}_w(I_{A,c}) = I_{A,c}$
for any $w \in \row(A)$, and because $I_{A,c}$ is also a prime ideal this multiplicity is also one. Finally, it follows from \cite[Definition 3.6.5]{maclagan_2015_introduction} that the multiplicity of $\row(A) \cap \Gamma$ is the index of the lattice generated by the rows of $A$ plus the lattice of integer vectors in the linear subspace parallel to the affine hull of $\Gamma$. 
\end{proof}
Computing the multiplicities of tropical critical points comes down to computing a certain determinant, as shown in the following example.
\begin{example}[Independent binary random variables continued]
We continue with Example \ref{example: binary random variables - tropical solution}. 
For the choice of $v=(1,2,-1,3)$ and $0 < \varepsilon \ll 1$, 
the stable intersection $\row(A) \cap_{st} L_{A,u}$  
yields two points:
$$ (3\varepsilon, 0,0,0) = \varepsilon(1,2,-1,3) + 2\varepsilon(1,1,1,1) - 4\varepsilon(0,1,0,1) - \varepsilon(0,0,1,1) = (0,0,0,0) + 3\varepsilon(1,0,0,0)$$
$$ (0,2,1,3+3\varepsilon) = \varepsilon(1,2,-1,3) - \varepsilon (1,1,1,1) + (2 - \varepsilon)(0,1,0,1) + (1+2\varepsilon)(0,0,1,1) = (0,2,1,2) + (1+3\varepsilon)(0,0,0,1)$$ 
The first point lies on the cone 
$(0,0,0,0) + \lambda (1,0,0,0)$ 
and the second on the cone 
$(0,2,1,2) + \lambda(0,0,0,1)$
belonging to $L_{A,u}$. As $\varepsilon \rightarrow 0$, we recover the tropical critical points $\hat{q}_1 = (0,0,0,0)$
and $\hat{q}_2 = (0,2,1,3)$.
We compute the respective multiplicities by
$$ \operatorname{mult}(\hat{q}_1) = \det \begin{pmatrix}
    1 & 1 & 1 & 1 \\
    0 & 1 & 0 & 1 \\
    0 & 0 & 1 & 1 \\
    1 & 0 & 0 & 0
\end{pmatrix} = 1, 
\qquad\qquad
\operatorname{mult}(\hat{q}_2) = \det \begin{pmatrix}
    1 & 1 & 1 & 1 \\
    0 & 1 & 0 & 1 \\
    0 & 0 & 1 & 1 \\
    0 & 0 & 0 & 1
\end{pmatrix} = 1.
$$
We note that to compute the stable intersection, one needs a generic, i.e., random  vector $v$. In practice one would choose a few random vectors and try each of them. If they all give the same result one can be almost sure that any one of them works. We chose this strategy in this example.
\end{example}
Corollary \ref{cor: intersection commutes with trop} forces us to study the tropical 
affine space $L_{A,u}$ which we will take on in the next section.   

\section{Tropical affine spaces}
\label{section: tropical affine spaces}
We will now construct the tropical affine space $L_{A,u}=\trop(Y_{A,u})$ more carefully, leading up to the proof of our main structural result, Theorem \ref{theorem: cones in LAu}. To simplify our analysis, we will express $L_{A,u}$ in terms of its homogenization $L_{A,u}^h$, which is a tropical linear space. 
\begin{proposition}\label{proposition: homogenization}
The tropical affine space $L_{A,u}$ is equal to the \rev{ intersection $L_{A,u}^h 
\cap \{x_{n+1} = 0\}$ where $L_{A,u}^h =\trop(\row(B^h))\subseteq \R^{n+1}$}. Here, $B^h=\left(\begin{smallmatrix*}[r]
B&0\\u^T&-1
\end{smallmatrix*}\right)$ is a $(n-k+1)\times(n+1)$ matrix and $B$ is a matrix whose rows span $\ker(A)$.
\end{proposition}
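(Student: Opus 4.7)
The plan is to express $\row(B^h)$ parametrically as a linear subspace of $K^{n+1}$, identify $Y_{A,u}$ with a suitable dehomogenization, and then lift this correspondence to the tropical setting using the fundamental theorem of tropical geometry.

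First I would describe $\row(B^h)$ explicitly. Since the rows of $B$ span $\ker(A)$, an arbitrary element of $\row(B^h)$ has the form $\sum_j\alpha_j(b_j,0)+\beta(u^T,-1)=(z+\beta u,-\beta)$ with $z\in\ker(A)$ and $\beta\in K$. Reparametrizing by $\mu=-\beta$ yields
$$\row(B^h)=\{(y,\mu)\in K^{n+1}\,:\,y+\mu u\in\ker(A)\}.$$
Setting $\mu=-1$ gives exactly $y\in u+\ker(A)=Y_{A,u}$, so the map $y\mapsto(y,-1)$ is a bijection from $Y_{A,u}$ onto $\row(B^h)\cap\{x_{n+1}=-1\}$. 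Moreover, any $(y,\mu)\in\row(B^h)$ with $\mu\neq 0$ is a nonzero scalar multiple of $(-y/\mu,-1)$, with $-y/\mu\in Y_{A,u}$.

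Next I would pass to tropicalizations via the fundamental theorem. For the inclusion $L_{A,u}\subseteq L_{A,u}^h\cap\{x_{n+1}=0\}$ (under the identification $\R^n\cong\R^n\times\{0\}$), any $w\in L_{A,u}$ is a limit $w=\lim_k\val(y^{(k)})$ with $y^{(k)}\in Y_{A,u}\cap(K^*)^n$; the lifts $(y^{(k)},-1)\in\row(B^h)\cap(K^*)^{n+1}$ then have valuations $(\val(y^{(k)}),0)$ converging to $(w,0)$, witnessing $(w,0)\in L_{A,u}^h$. Conversely, any $(w,0)\in L_{A,u}^h$ is a limit $(w,0)=\lim_k\val((y^{(k)},\mu^{(k)}))$ with $(y^{(k)},\mu^{(k)})\in\row(B^h)\cap(K^*)^{n+1}$; since $\val(\mu^{(k)})\to 0$ forces $\mu^{(k)}\neq 0$, the rescalings $-y^{(k)}/\mu^{(k)}$ are well-defined elements of $Y_{A,u}\cap(K^*)^n$, and their valuations $\val(y^{(k)})-\val(\mu^{(k)})\cdot\mathbf{1}$ converge to $w-0\cdot\mathbf{1}=w$, placing $w\in L_{A,u}$.

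The main subtlety lies in controlling when scalings leave the algebraic torus, since $\val$ is only defined on $K^*$. The key observation is that $\val(\mu^{(k)})\to 0$ guarantees $\mu^{(k)}\neq 0$, so the scalings $-y^{(k)}/\mu^{(k)}$ stay in $(K^*)^n$ whenever $(y^{(k)},\mu^{(k)})\in(K^*)^{n+1}$; the Euclidean closures in the definitions of $L_{A,u}$ and $L_{A,u}^h$ are then matched on both sides. The sign choice in $B^h$ (with $-1$ in the final entry) corresponds to dehomogenizing at $x_{n+1}=-1$ rather than $x_{n+1}=1$, but since $\val(-1)=0$ this convention has no effect on the tropical picture.
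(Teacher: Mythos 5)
Your proof is correct, but it takes a genuinely different route from the paper's. The paper argues at the level of defining ideals: it writes $L_{A,u}$ and $L_{A,u}^h$ as intersections of tropical hypersurfaces of the affine-linear forms $f=\sum_i c_ix_i+d$ in $J_{A,u}$ and of their homogenizations $g=\sum_i c_ix_i+dy$, observes that the two ``minimum achieved twice'' conditions coincide on the slice $x_{n+1}=0$ (since the valuation contribution of the homogenizing coordinate is then zero), and finally identifies $Y_{A,u}^h=\ker(A^h)$ with $\row(B^h)$ for $A^h=(A\mid -Au)$ by checking that the rows of $B^h$ lie in $\ker(A^h)$. You instead work parametrically: you describe $\row(B^h)$ explicitly, identify $Y_{A,u}$ with the affine slice $\{x_{n+1}=-1\}$ of $\row(B^h)$, and transfer everything to the tropical side via the fundamental theorem, using the $K^*$-scaling action on the linear space $\row(B^h)$ (which tropicalizes to translation by multiples of the all-ones vector) to handle points with last coordinate different from $-1$. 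Your route makes the dehomogenization geometrically explicit and avoids the mild implicit step in the paper's argument that $\trop(Y_{A,u}^h)$ is already cut out by the homogenizations of the degree-$\leq 1$ elements of $J_{A,u}$; the paper's route is shorter and produces the matrix $A^h$ directly, which explains why $\row(B^h)$ is the correct homogenization. One cosmetic remark: in your converse direction, the observation that $\val(\mu^{(k)})\to 0$ ``forces'' $\mu^{(k)}\neq 0$ is unnecessary, since $\mu^{(k)}\neq 0$ already follows from $(y^{(k)},\mu^{(k)})\in(K^*)^{n+1}$; the convergence $\val(\mu^{(k)})\to 0$ is needed only to conclude that the valuations of the rescaled points converge to $w$.
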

\begin{proof}
We recall the definition of $L_{A,u}$ as a tropical variety
$$
L_{A,u} = \bigcap_{f\in J_{A,u}} \big\lbrace x\in\R^n \,:\, \text{min. is achieved twice in}\, \min(\val(d)\,,\,x_i+\val(c_i): i\in [n])\big\rbrace\subseteq \R^n.
$$
Here, we write the inhomogeneous polynomials $f\in J_{A,u}$ as $f=\sum_{i=1}^n c_i x_i + d$. The homogenization of $J_{A,u}$ is a linear ideal $J_{A,u}^h\subseteq K[x_1^{\pm 1},\dots,x_n^{\pm 1},\rev{x_{n+1}^{\pm 1}}]$ whose elements can be written as $g=\sum_{i=1}^nc_ix_i+d \rev{x_{n+1}}$. This ideal determines the tropical linear space $L_{A,u}^h=\trop(Y_{A,u}^h)$, equal to
$$
L_{A,u}^h = \bigcap_{g\in J_{A,u}^h}\big\lbrace (x,\rev{x_{n+1}})\in\R^{n+1} \,:\, \text{min. is achieved twice in}\, \min(\rev{x_{n+1}}+\val(d)\,,\,x_i + \val(c_i) : i\in [n])\big\rbrace \subseteq\R^{n+1}.
$$
Clearly a point $x$ lies in $L_{A,u}$ if and only if $(x,0)$ lies in $L_{A,u}^h$. We now show that $Y_{A,u}^h=\row(B^h)$. The affine space $Y_{A,u}=u+\ker(A)$ is homogenized to the linear space $Y_{A,u}^h=\ker(A^h)\subseteq (K^*)^{n+1}$, where $A^h=(A\mid -Au)$. The proposed matrix $B^h$ satisfies $(A^h)^TB^h=0$, and thus $\row(B^h)=\ker(A^h)$.
\end{proof}
Tropical linear spaces are well studied and their structure depends on the matroid associated with the linear space.
\begin{definition}
A \textit{matroid} $M$ on $[n]$ is a nonempty collection of subsets of $[n]$ which are called the \textit{bases} of $M$ such that the following \emph{basis-exchange axiom} is satisfied: for any two bases $\tau_1,\tau_2$ of $M$ and all $i_1\in\tau_1\setminus \tau_2$, there exists $i_2\in\tau_2\setminus\tau_1$ such that $\tau_1-i_1+i_2$ is a basis of $M$.    
\end{definition} 
It follows from the definition that all bases of $M$ have the same size, which is called the \textit{rank} of the matroid. We will write $\tau\in M$ to denote that the subset $\tau\subseteq [n]$ is a basis of $M$ and make use of the following stronger version of the basis-exchange axiom \cite{brualdi_1969_comments}.
\begin{proposition}[symmetric basis exchange]
For any two bases $\tau_1,\tau_2$ of a matroid $M$ and all $i_1\in\tau_1\setminus \tau_2$, there exists $i_2\in\tau_2\setminus\tau_1$ such that $\tau_1-i_1+i_2$ and $\tau_2-i_2+i_1$ are bases of $M$.
\end{proposition}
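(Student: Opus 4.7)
The plan is to combine the fundamental circuit and fundamental cocircuit associated with $i_1$, and then invoke the classical circuit-cocircuit orthogonality relation to produce a single witness $i_2$ that makes both swaps valid simultaneously. The ordinary basis-exchange axiom produces some $i_2 \in \tau_2 \setminus \tau_1$ with $\tau_1 - i_1 + i_2 \in M$, but a priori this $i_2$ need not also satisfy $\tau_2 - i_2 + i_1 \in M$; the whole point is to find one element that lies in both ``allowed swap'' sets at once.

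Fix bases $\tau_1, \tau_2 \in M$ and an element $i_1 \in \tau_1 \setminus \tau_2$. First, I would form the \emph{fundamental circuit} $C := C(i_1, \tau_2)$, the unique minimal dependent subset of $\tau_2 \cup \{i_1\}$ containing $i_1$; a standard consequence of the basis-exchange axiom is that $\tau_2 - j + i_1 \in M$ for every $j \in C \setminus \{i_1\}$. Dually, I would introduce the \emph{fundamental cocircuit} $C^* := \lbrace j \in [n] : \tau_1 - i_1 + j \in M\rbrace$, which is a cocircuit of $M$ containing $i_1$, and by definition every $j \in C^* \setminus \{i_1\}$ gives a basis $\tau_1 - i_1 + j$.

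The key step is to invoke the matroid orthogonality principle: for any circuit $C$ and any cocircuit $C^*$, $|C \cap C^*| \neq 1$. Since $i_1$ lies in both $C$ and $C^*$, their intersection must contain a second element $i_2 \neq i_1$. From $i_2 \in C \subseteq \tau_2 \cup \{i_1\}$ we deduce $i_2 \in \tau_2$; since $\tau_1 - i_1 + i_2$ is a basis, $i_2 \notin \tau_1$. Hence $i_2 \in \tau_2 \setminus \tau_1$, and by the defining properties of $C$ and $C^*$ both $\tau_1 - i_1 + i_2$ and $\tau_2 - i_2 + i_1$ are bases, as required.

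The main obstacle is justifying the orthogonality lemma $|C \cap C^*| \neq 1$ from the basis-exchange axiom as stated in the paper, since circuits and cocircuits are not defined in the excerpt. This is a classical matroid-theoretic fact which I would either cite (e.g. \cite{brualdi_1969_comments} or any standard matroid reference) or prove in a short self-contained lemma: assuming $C \cap C^* = \{i_1\}$, one pairs $C \setminus \{i_1\} \subseteq \tau_2 \setminus \tau_1$ against $C^* \setminus \{i_1\} \subseteq [n] \setminus \tau_1$ and applies basis exchange to reach a contradiction.
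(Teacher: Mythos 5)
Your argument is correct. Note, however, that the paper does not actually prove this proposition: it is stated as a known strengthening of the basis-exchange axiom and attributed to \cite{brualdi_1969_comments}, so there is no in-paper proof to compare against. What you have written is essentially the classical proof of Brualdi's symmetric exchange theorem: take the fundamental circuit $C(i_1,\tau_2)$ and the fundamental cocircuit $\{j : \tau_1-i_1+j\in M\}$ (the complement of the closure of $\tau_1-i_1$, hence a cocircuit), and use circuit--cocircuit orthogonality $|C\cap C^*|\neq 1$ to extract a common witness $i_2$. All the individual steps check out: $i_2\in C\setminus\{i_1\}$ forces $i_2\in\tau_2$ and makes $\tau_2-i_2+i_1$ a basis (the only circuit of $\tau_2+i_1$ is $C$, and removing $i_2$ destroys it), while $i_2\in C^*$ makes $\tau_1-i_1+i_2$ a basis and rules out $i_2\in\tau_1$. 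The one place I would tighten the write-up is the closing sketch of the orthogonality lemma: rather than ``pairing'' elements and applying basis exchange, the cleanest self-contained argument is that $C\cap C^*=\{i_1\}$ would give $C-i_1\subseteq \operatorname{cl}(\tau_1-i_1)$, hence $i_1\in\operatorname{cl}(C-i_1)\subseteq\operatorname{cl}(\tau_1-i_1)$, contradicting the independence of $\tau_1$. With that adjustment your proof is complete and is exactly the standard route the cited reference takes.
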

The matroid $M(A)$ associated with a nonsingular matrix $A\in\Z^{k\times n}$ is the matroid on $[n]$ of rank $k$ whose bases are given by those $k$-subsets for which the corresponding columns are linearly independent, i.e., $\tau\in M(A) \iff \det(A_\tau)\neq 0$. The following three matroids will be relevant to us: $M(A)$ is the rank $k$ matroid on $[n]$ determined by the matrix $A$ that specifies the toric model; bases of $M(A)$ will be denoted by $\tau$. The matroid $M(B)$ on $[n]$ has rank $(n-k)$ and is dual to $M(A)$, which means that its bases are given by $\sigma = [n]\setminus \tau$ for all $\tau\in M(A)$. By our assumption that $\row(A)$ contains the all ones vector, matrix $A$ has no zero columns. As a result, every element of $[n]$ is contained in at least one basis of $M(A)$ and no element of $[n]$ is contained in every basis of $M(B)$; we say that $M(A)$ has no \textit{loops} and $M(B)$ has no \textit{coloops}. Finally $M(B^h)$ is the matroid on $[n+1]$ of rank $(n-k+1)$ whose bases will be denoted by $\gamma$.
\begin{proposition}\label{proposition: matroid M(B')}
For generic $u$, the bases of $M(B^h)$ are $\lbrace \sigma+j\,:\, \sigma\in M(B),\,j\in [n+1]\setminus \sigma\rbrace$.
\end{proposition}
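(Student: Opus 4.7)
The plan is to directly analyze the maximal minors of $B^h$. A subset $\gamma\subseteq[n+1]$ of size $n-k+1$ is a basis of $M(B^h)$ precisely when the square submatrix $B^h_\gamma$ is nonsingular, so I will split into cases based on whether the index $n+1$ lies in $\gamma$.

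If $n+1 \in \gamma$, write $\gamma = \sigma + (n+1)$ with $\sigma \subseteq [n]$ of size $n-k$. The submatrix $B^h_\gamma$ then has a last column equal to $(0,\dots,0,-1)^T$, so cofactor expansion along this column gives $\det(B^h_\gamma) = \pm \det(B_\sigma)$. Hence $\gamma$ is a basis of $M(B^h)$ if and only if $\sigma$ is a basis of $M(B)$, which matches the description $\gamma = \sigma + j$ with $j = n+1 \in [n+1]\setminus\sigma$. This case does not depend on $u$.

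If $n+1 \notin \gamma$, then $\gamma \subseteq [n]$ and $B^h_\gamma$ consists of the columns of $B$ indexed by $\gamma$ with the corresponding entries of $u^T$ appended as a last row. Expanding the determinant along this last row produces a homogeneous linear form in $u$:
\[
\det(B^h_\gamma) \,=\, \sum_{i \in \gamma} \pm\, u_i \det(B_{\gamma\setminus i}).
\]
This linear form is identically zero precisely when every coefficient $\det(B_{\gamma\setminus i})$ vanishes, i.e., when no $(n-k)$-subset $\gamma\setminus i$ belongs to $M(B)$. For generic $u$, the linear form evaluates to a nonzero value if and only if it is not identically zero, so $\gamma \in M(B^h)$ if and only if $\gamma = \sigma + j$ for some $\sigma \in M(B)$ and $j \in \gamma\setminus\sigma \subseteq [n]\setminus\sigma$. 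Combining both cases yields exactly the claimed family of bases.

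The one subtlety is justifying the genericity argument uniformly across all $\gamma$. There are at most $\binom{n+1}{n-k+1}$ subsets to consider, and each one whose linear form is not identically zero vanishes only on a proper hyperplane in the space of data vectors. A generic $u$ can therefore be chosen to lie outside the finite union of these hyperplanes, so all relevant determinants are simultaneously nonzero; this is the main (and only mildly subtle) step of the argument.
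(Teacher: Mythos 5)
Your proof is correct and follows essentially the same route as the paper: split on whether $n+1\in\gamma$, reduce the first case to $\det(B_\sigma)$ by expanding along the last column, and in the second case Laplace-expand along the row $u^T$ and invoke genericity of $u$ to conclude the determinant is nonzero exactly when some coefficient $\det(B_{\gamma\setminus i})$ is. Your closing remark about taking $u$ outside a finite union of hyperplanes just makes explicit a point the paper leaves implicit.
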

\begin{proof}
By genericity of $u$, the rank of $B^h$ is $n-k+1$ and the bases of the matroid $M(B^h)$ are given by the $(n-k+1)$-subsets $\gamma\subseteq [n+1]$ of columns for which the determinant $\det(B^h_\gamma)$ is nonzero. 
If $\gamma$ contains $(n+1)$ then $\det(B^h_\gamma) = \pm\det(B_{\gamma-(n+1)})$. Hence $\gamma$ is a basis of $M(B^h)$ if and only if $\gamma-(n+1)$ is a basis of $M(B)$. Otherwise we write the determinant using Laplace expansion along the last row as
\begin{equation}\label{eq: Laplace expansion of determinant}
\det(B^h_{\gamma}) = \sum_{j\in\gamma} (-1)^{t(j)} \det(B_{\gamma-j})\cdot u_j
\end{equation}
for some bijection $t:\gamma\mapsto[n-k+1]$. By genericity of $u$, this determinant is nonzero if and only if $\det(B_{\gamma-j})\neq 0$ for some $j$ or, equivalently, if there exists $j\in\gamma$ such that $\gamma-j=\sigma\in M(B)$.
\end{proof}
The matroid $M(B^h)$ is called the free coextension of $M(B)$; see \cite{eur_2023_stellahedral}. We note that the element $(n+1)$ plays a special role for this matroid. This is related to the role of the $(n+1)$st coordinate of $\R^{n+1}$ as a homogenization variable. The final ingredient to define tropical linear spaces are tropical Pl\"{u}cker coordinates.
\begin{definition}
Let $\row(\rev{\Lambda})\subseteq (K^*)^n$ be a linear subspace determined by matrix $\rev{\Lambda}$. The \textit{tropical Pl\"{u}cker vector} is the function $\pi:\lbrace\text{bases of $M(\rev{\Lambda})$}\rbrace\rightarrow \R$ determined by $\pi_\tau = \val(\det(\rev{\Lambda}_\tau))$.
\end{definition}
\begin{proposition}
\label{proposition: tropical plucker vector}
\rev{For generic $u$}, the tropical Pl\"{u}cker vector of $\row(B^h)\subseteq (K^*)^n$ is equal to
$$
\pi_{\gamma} = \min\{ w_i\,:\, i\in\gamma\text{~s.t.~}\gamma-i\in M(B) \},
$$
for all bases $\gamma$ that do not contain $(n+1)$, and $\pi_{\gamma}=0$ for all bases $\gamma$ that contain $(n+1)$.
\end{proposition}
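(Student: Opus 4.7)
The plan is to compute $\pi_\gamma = \val(\det(B^h_\gamma))$ directly for each basis $\gamma$ of $M(B^h)$, splitting into two cases according to whether $(n+1)\in\gamma$, and then to invoke the genericity of $u$ to conclude.

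First, suppose $\gamma$ contains $(n+1)$. The $(n+1)$st column of $B^h$ is $-e_{n-k+1}$, so expanding the determinant along this column yields $\det(B^h_\gamma) = \pm\det(B_{\gamma-(n+1)})$. By Proposition~\ref{proposition: matroid M(B')}, the complement $\gamma-(n+1)$ is a basis of $M(B)$, and since $B$ has integer entries this determinant is a nonzero integer. Its valuation is therefore $0$, giving $\pi_\gamma=0$.

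Next, suppose $(n+1)\notin\gamma$. Reusing the Laplace expansion along the last row of $B^h$ from the proof of Proposition~\ref{proposition: matroid M(B')},
$$
\det(B^h_\gamma) \;=\; \sum_{j\in\gamma} (-1)^{t(j)}\det(B_{\gamma-j})\,u_j.
$$
Each summand has valuation $\val(\det(B_{\gamma-j})\cdot u_j)$, which equals $w_j$ when $\gamma-j\in M(B)$ (since then $\det(B_{\gamma-j})$ is a nonzero integer and $\val(u_j)=w_j$) and is $+\infty$ otherwise. By the ultrametric property of the valuation,
$$
\val(\det(B^h_\gamma)) \;\geq\; \min\{w_j : j\in\gamma,\ \gamma-j\in M(B)\}.
$$
To get equality I would use that $u$ is generic. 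Writing $u_j = c_j t^{w_j}+\cdots$ with nonzero leading coefficients $c_j$, the coefficient of $t^{w_{\min}}$ in $\det(B^h_\gamma)$ equals
$$
\sum_{\substack{j\in\gamma,\ \gamma-j\in M(B)\\ w_j=w_{\min}}} (-1)^{t(j)}\det(B_{\gamma-j})\,c_j,
$$
a nonzero integer linear combination of generic scalars $c_j$. For generic $u$ this sum does not vanish, so no cancellation occurs in the lowest order term and $\val(\det(B^h_\gamma))=w_{\min}$, as claimed.

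The only delicate point is justifying the lack of cancellation in the final step, which amounts to a standard genericity argument: the set of tuples $(c_j)$ making the leading coefficient vanish is cut out by a single nonzero linear equation with integer coefficients, hence forms a proper Zariski-closed subset, which is avoided under the genericity hypothesis on $u$.
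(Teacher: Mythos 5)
Your proof is correct and follows essentially the same route as the paper's: Laplace expansion of $\det(B^h_\gamma)$ along the last row (restricting to terms with $\gamma-j\in M(B)$) for bases avoiding $(n+1)$, and the reduction $\det(B^h_\gamma)=\pm\det(B_\sigma)$ for bases containing it. In fact you spell out the genericity argument ruling out cancellation in the lowest-order term more explicitly than the paper does.
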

\begin{proof}
We recall expression \eqref{eq: Laplace expansion of determinant} for the determinant $\det(B^h_\gamma)$ when $\gamma$ does not contain $(n+1)$
$$
\det(B^h_\gamma) = \sum_{i\in\gamma}(-1)^{t(i)}\det(B_{\gamma-i})\cdot u_i = \sum_{\substack{i\in\gamma\,:\,\gamma-i\in M(B)}}(-1)^{t(i)}\det(B_{\gamma-i})\cdot u_i.
$$
\rev{By genericity of $u$,} the valuation of this sum gives the proposed minimization expression for $\pi_{\gamma}$.  For a basis $\gamma=\sigma+(n+1)$ with $\sigma\in M(B)$, we have $\det(B^h_\gamma) =  \pm\det(B_\sigma)$ whose valuation is zero and thus $\pi_{\gamma}=0$.  \rev{We note that for any fixed valuation $w=\val(u)$, the required genericity condition on the data vector $u$ is that the expression above for the determinant $\det(B^h_{\gamma})$ has no unexpected cancellations. For instance, if $u$ satisfies $(-1)^{t(i)}\det(B_{\gamma-i})u_i+(-1)^{t(i')}\det(B_{\gamma-i'})u_{i'}=0$ for some $i,i'\in\gamma$, this would lead to $w_i,w_{i'}$ not appearing in the expression for $\pi_{\gamma}$. Such cancellations occur when $u$ lies on a union of hypersurfaces cut out by equations as above, which is avoided by a generic data vector.}
\end{proof}
We will use the following characterization of tropical linear spaces.
\begin{lemma}[{\cite[Lemma 4.4.7]{maclagan_2015_introduction}}]\label{lemma: tropical linear space initial matroid}
Let $L$ be a tropical linear space associated with matroid $M$ on $[n]$ and $\pi$ the tropical Pl\"{u}cker vector. Then a point $x\in\mathbb{R}^n$ lies in $L$ if and only if the collection of bases
$$
\Big\lbrace \tau \in M \,:\, \langle e_{\tau},x\rangle - \pi_\tau \textup{~is maximal \rev{over all bases of} M}\Big\rbrace
$$
covers every element of $[n]$. 
\end{lemma}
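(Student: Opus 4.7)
The plan is to verify Theorem \ref{theorem: cones in LAu} by lifting everything to the homogenization $L_{A,u}^h$ of Proposition \ref{proposition: homogenization}, where Lemma \ref{lemma: tropical linear space initial matroid} reduces membership to a combinatorial covering condition on the bases of $M(B^h)$ (described in Proposition \ref{proposition: matroid M(B')}) weighted by the Pl\"ucker vector $\pi$ (computed in Proposition \ref{proposition: tropical plucker vector}). Fix a basis $\tau\in M(A)$ and write $\sigma=[n]\setminus\tau\in M(B)$. The design of the statement is that the min-expression for $w^{(\tau)}_j$ with $j\in\tau$ reproduces exactly $\pi_{\sigma\cup\{j\}}$ (since $\sigma\cup\{j\}-i\in M(B)\iff \tau-j+i\in M(A)$), while the max-expression for $w^{(\tau)}_i$ with $i\in\sigma$ is engineered to make the maximality step of the argument below go through.

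For the cone inclusion $C_\tau\subseteq L_{A,u}$, I take $x=w^{(\tau)}+\sum_{i\in\sigma}\lambda_i e_i$ with $\lambda_i\geq 0$ and lift to $(x,0)\in\R^{n+1}$. I single out the family $\mathcal{T}=\{\gamma_*\}\cup\{\gamma_j:j\in\tau\}$ with $\gamma_*=\sigma\cup\{n+1\}$ and $\gamma_j=\sigma\cup\{j\}$, all bases of $M(B^h)$ by Proposition \ref{proposition: matroid M(B')}. Using $\pi_{\gamma_*}=0$, $\pi_{\gamma_j}=w^{(\tau)}_j$, and $x_j=w^{(\tau)}_j$ for $j\in\tau$ (since $\lambda_j=0$), every basis in $\mathcal{T}$ attains the common value $\langle e_\gamma,(x,0)\rangle-\pi_\gamma=\sum_{i\in\sigma}x_i$. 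Since $\bigcup_{\gamma\in\mathcal{T}}\gamma=\sigma\cup\tau\cup\{n+1\}=[n+1]$, the covering condition of Lemma \ref{lemma: tropical linear space initial matroid} holds automatically provided this common value is globally maximal over $M(B^h)$.

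To prove maximality I split on whether a test basis $\gamma'$ contains $n+1$. If $\gamma'=\sigma'\cup\{n+1\}$ with $\sigma'\in M(B)$, then $\pi_{\gamma'}=0$ and the claim reduces to $\sum_{i\in\sigma'}x_i\leq\sum_{i\in\sigma}x_i$; writing $\tau':=[n]\setminus\sigma'\in M(A)$ and applying symmetric basis exchange iteratively between $\tau$ and $\tau'$, I build a bijection $\phi:\tau'\setminus\tau\to\tau\setminus\tau'$ with $\tau-\phi(i)+i\in M(A)$, after which the max-formula yields $w^{(\tau)}_i\geq w^{(\tau)}_{\phi(i)}$ and summation with $\lambda_i\geq 0$ closes the case. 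If $\gamma'=\sigma'\cup\{i^*\}$ with $n+1\notin\gamma'$, Proposition \ref{proposition: tropical plucker vector} expresses $\pi_{\gamma'}$ as a min over exchange-compatible entries, and the same bijection technique together with careful bookkeeping of the extra $x_{i^*}$ term reduces to the previous case. The vertex claim then follows by examining the tight-basis collection $\mathcal{T}$ at $(w^{(\tau)},0)$: any direction $d\in\R^n$ whose signed perturbations $\pm\varepsilon d$ both preserve the covering of Lemma \ref{lemma: tropical linear space initial matroid} must keep every $\gamma\in\mathcal{T}$ tight for both signs, which forces $d_j=0$ for all $j\in\tau$ (to maintain the tie between $\gamma_*$ and $\gamma_j$) and $d_i=0$ for the indices $i\in\sigma$ realizing the min that defines some $\pi_{\gamma_j}$; the balance between the min and max halves of the definition of $w^{(\tau)}$ rules out any nonzero $d$.

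The main obstacle is the maximality step: the uniform inequality $\langle e_{\gamma'},(x,0)\rangle-\pi_{\gamma'}\leq\sum_{i\in\sigma}x_i$ must hold across every basis of $M(B^h)$, not just the distinguished ones in $\mathcal{T}$, and it is visible only by simultaneously invoking both the min-formula (which controls the Pl\"ucker values $\pi_{\gamma_j}$) and the max-formula (which controls the shifts needed when $\sigma'\neq\sigma$) in the definition of $w^{(\tau)}$. Packaging the comparison into a matroid bijection via iterated symmetric basis exchange, so that the max-formula telescopes the resulting sum, is the cleanest route and the technical heart of the proof.
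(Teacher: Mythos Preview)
Your proposal is not a proof of the stated lemma at all. Lemma~\ref{lemma: tropical linear space initial matroid} is quoted verbatim from \cite[Lemma 4.4.6]{maclagan_2015_introduction} and carries no proof in this paper; it is used as a black box. What you have written is instead a proof sketch of Theorem~\ref{theorem: cones in LAu}, which \emph{applies} the lemma. So there is nothing to compare: the paper does not prove the statement you were asked about, and you did not attempt to prove it either.

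Treating your write-up as an alternative proof of Theorem~\ref{theorem: cones in LAu}, it differs from the paper's argument in organization. The paper defines the set $C$ of points whose entire $\sigma$-orthant translate stays in $L_{A,u}^h$, pins down $x_j=\pi_{\sigma+j}$ for $j\in\tau$ by sending $\lambda\to\infty$, and then reduces the remaining infinite family of inequalities $\langle e_\gamma,\hat{x}\rangle-\pi_\gamma\le\langle e_\sigma,\hat{x}\rangle$ to the case $|\sigma\setminus\gamma|=1$ via the two technical Lemmas~\ref{claim: one-dimensional inequalities imply all others} and~\ref{claim: expression for vertex}. You instead try to verify the covering criterion of Lemma~\ref{lemma: tropical linear space initial matroid} directly, arguing that the distinguished family $\mathcal{T}$ attains the global maximum. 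Your treatment of bases $\gamma'=\sigma'\cup\{n{+}1\}$ via a Brualdi-type bijective exchange is clean and arguably more transparent than the paper's case analysis. However, two steps are genuinely incomplete. First, for bases $\gamma'\not\ni n{+}1$ you assert that ``careful bookkeeping of the extra $x_{i^*}$ term reduces to the previous case''; this is exactly where the paper needs the full strength of Lemmas~\ref{claim: one-dimensional inequalities imply all others}--\ref{claim: expression for vertex}, and your one-line reduction does not go through without an argument of comparable length. Second, your vertex argument only forces $d_i=0$ for those $i\in\sigma$ that realize some minimum defining a $\pi_{\gamma_j}$, not for all $i\in\sigma$; the paper avoids this by showing $C$ is already a full $(n{-}k)$-dimensional cone inside the pure $(n{-}k)$-dimensional complex $L_{A,u}$, which forces its apex to be a vertex.
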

We are now ready to prove our main structural result on the tropical affine space $L_{A,u}$.

\conesLAu*
\begin{proof}
We will prove that the tropical linear space $L_{A,u}^h$ contains the cone $C_\tau$ in the coordinate subspace $x_{n+1}=0$. This implies the theorem by \rev{Proposition \ref{proposition: homogenization}}. Let $\tau\in M(A)$ and $\sigma=[n]\setminus \tau$ and consider the set
\begin{equation}\label{eq: definition set C}
C = \big\lbrace(x,0)\in\mathbb{R}^{n+1}\,:\, (x,0)+\sum_{i\in\sigma}\lambda_ie_i\in L_{A,u}^h\text{~for all~}\lambda\geq 0\big\rbrace.
\end{equation}
Write $\hat{x}=(x,0)\in C$ and consider the quantity
\begin{equation}\label{eq: sum for tropical linear space}
S_\gamma = \langle e_\gamma, \hat{x}+\sum_{i\in\sigma}\lambda_ie_i\rangle-\pi_{\gamma}=\langle e_\gamma,\hat{x}\rangle-\pi_{\gamma}+\sum_{i\in\gamma\cap\sigma}\lambda_i,
\end{equation} 
for some basis $\gamma$  of $M(B^h)$. If $\lambda\gg0$ is chosen large enough, then $S_{\gamma}$ in \eqref{eq: sum for tropical linear space} will always be larger for bases with the largest possible intersection $\gamma\cap\sigma$ compared to other bases. These bases of maximal intersection are of the form $\gamma=\sigma+j$ with $j\in\tau\cup(n+1)$. Since $\hat{x}+\sum_{i\in\sigma}\lambda_ie_i\in L_{A,u}^h$ by definition of the set $C$, \rev{ Lemma \ref{lemma: tropical linear space initial matroid} applied to $L_{A,u}^h$ implies} that the union of all bases $\gamma$ for which $S_\gamma$ is maximal must equal $[n+1]$. This implies that $S_{\sigma+j}$ must be maximal for all $j\in\tau\cup(n+1)$\rev{---since otherwise $j$ would not be covered by a maximal basis---}, and thus equal. In particular, since $\pi_{\sigma+(n+1)}=0$, we find
$$
S_{\sigma+j}=S_{\sigma+(n+1)}\iff \langle e_{\sigma+j},\hat{x}+\sum_{i\in\sigma}\lambda_i e_i\rangle-\pi_{\sigma+j} = \langle e_{\sigma+(n+1)},\hat{x}+\sum_{i\in\sigma}\lambda_i e_i\rangle\iff x_j=\pi_{\sigma+j}=:w^{(\tau)}_j,
$$
for all $j\in\tau$ and for every $(x,0)\in C$. \rev{The last equality $\pi_{\sigma+j} = w_j^{(\tau)}$ follows from Proposition \ref{proposition: tropical plucker vector} and matroid duality.}

The tropical linear space $L_{A,u}^h$ is a polyhedral complex dual to the regular subdivision of the matroid polytope $\conv(e_{\gamma}:\gamma\in M(B^h))$ induced by the weight vector $\pi$, restricted to the faces that correspond to loopless matroids; see for instance \cite[Lemma 4.4.7]{maclagan_2015_introduction}. As a result, two points  in the same cell of $L_{A,u}^h$ maximize $S_\gamma$ over the same collection of bases $\gamma$ of $M(B^h)$, and any point  in a lower-dimensional face of this cell will maximize $S_\gamma$ over the same bases and potentially others. For any $(x,0)\in C$ we can always choose $\lambda\gg0$ such that $(x',0)=(x,0)+\sum_{i\in\sigma}\lambda_ie_i$ maximizes $S_\gamma$ \rev{on the bases} $\{\sigma+j \, : \, j\in\tau\cup(n+1)\}$ \rev{among all bases of $M(B^h)$}. Since the points $(x,0),(x',0)$ are in the same cell, with $(x,0)$ potentially being in a lower-dimensional face, we know that $(x,0)$ also maximizes $S_\gamma$ over the same bases and thus in particular
$$
S_\gamma\leq S_{\sigma+(n+1)}\iff \langle e_\gamma,(x,0)\rangle-\pi_{\gamma}\leq \langle e_\sigma,x\rangle \text{~for all $\gamma\in M(B^h)$ and all $(x,0)\in C$}.
$$
By Lemmas \ref{claim: one-dimensional inequalities imply all others} and \ref{claim: expression for vertex} below, we know that this is equivalent to
$$
x_i\geq \max\lbrace\pi_{\sigma+j}\,:\,j\in\tau\text{~s.t.~}\sigma+j-i\in M(B) \rbrace =: w^{(\tau)}_i \text{~for all $i\in\sigma$}.
$$
The set $C$ can thus be characterized as
\begin{align*}
C &= \big\lbrace (x,0)\in\mathbb{R}^n\,:\, x_j=w^{(\tau)}_j\text{~and~}x_i\geq w^{(\tau)}_i\text{~for all~}j\in\tau,i\in\sigma\big\rbrace
\\
&=\big\lbrace (w^{(\tau)},0)+\sum_{i\in\sigma}\lambda_ie_i\text{~for~}\lambda\geq 0\big\rbrace.
\end{align*}
This is a cone with vertex $w^{(\tau)}$ and rays $(e_i)_{i\in\sigma}$ and thus must lie in a top-dimensional cone of $L_{A,u}^h$ with vertex $(\tilde{w},0)$ and rays $(e_i)_{i\in\sigma}$. The definition of $C$ in \eqref{eq: definition set C} implies that $(\tilde{w},0)$ must lie in $C$ as well, and thus that $\tilde{w}=w^{(\tau)}$. This shows that $w^{(\tau)}$ is a vertex in $L_{A,u}^h$. Dehomogenizing $L_{A,u}^h$ to $L_{A,u}$ by forgetting the $(n+1)$st coordinate completes the proof.
\end{proof}
We now prove the two technical lemmas.
\begin{lemma}\label{claim: one-dimensional inequalities imply all others}
Fix $\tau\in M(A)$ and let $(x,0)\in\mathbb{R}^{n+1}$ with $x_j=\pi_{\sigma+j}$ for all $j\in\tau$. Then the inequality $\langle e_\gamma, (x,0)\rangle-\pi_\gamma\leq \langle e_\sigma,(x,0)\rangle$ holds for all $\gamma\in M(B^h)$  if and only if it holds for all $\gamma\in M(B^h)$ with $\vert\sigma\backslash\gamma\vert=1$
\rev{where $\sigma = [n] \setminus \tau$}.
\end{lemma}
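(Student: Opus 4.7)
The forward direction of the biconditional is immediate, so all the work lies in the backward implication. My first step would be to rewrite everything in terms of $h(\gamma) := \langle e_\gamma, (x,0)\rangle - \pi_\gamma$. Using $x_j = \pi_{\sigma+j}$ for $j \in \tau$ together with $x_{n+1}=0$ and $\pi_{\sigma+(n+1)}=0$ from Proposition~\ref{proposition: tropical plucker vector}, a short calculation shows $h(\sigma+j) = \sum_{i \in \sigma} x_i = \langle e_\sigma,(x,0)\rangle$ for every $j \in \tau \cup \{n+1\}$. Setting $\gamma_0 := \sigma+(n+1)$, the target inequality $h(\gamma) \leq \langle e_\sigma,(x,0)\rangle$ then reads $h(\gamma) \leq h(\gamma_0)$, so the lemma reduces to proving that $\gamma_0$ is a global maximizer of $h$ over all bases of $M(B^h)$.

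Next I would check that the hypothesis already forces $\gamma_0$ to be a \emph{local} maximizer of $h$ under single-element basis exchange. A neighbor of $\gamma_0$ takes the form $\gamma_0 - l + m$ with $l \in \gamma_0$ and $m \in [n+1] \setminus \gamma_0 = \tau$. If $l = n+1$, the neighbor is $\sigma+m$ and ties with $\gamma_0$ by the equality above. If $l \in \sigma$, the neighbor is $(\sigma-l) \cup \{m, n+1\}$, which has $|\sigma \setminus \gamma'|=1$, so the hypothesis yields $h(\gamma') \leq h(\gamma_0)$ directly. Thus $h(\gamma_0) \geq h(\gamma_0 - l + m)$ for every single-element exchange out of $\gamma_0$.

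To upgrade local to global, I would invoke the tropical basis-exchange axiom for the valuated matroid $\pi$, a consequence of the tropical Pl\"ucker relations: if there existed a basis $\gamma^*$ with $h(\gamma^*) > h(\gamma_0)$, take one minimizing $|\gamma^* \triangle \gamma_0|$ and any $j \in \gamma^* \setminus \gamma_0$. The axiom produces $i \in \gamma_0 \setminus \gamma^*$ with both $\gamma^*-j+i$ and $\gamma_0-i+j$ bases and $\pi_{\gamma^*} + \pi_{\gamma_0} \geq \pi_{\gamma^*-j+i} + \pi_{\gamma_0-i+j}$. Rewriting in terms of $h$ gives $h(\gamma^*-j+i) + h(\gamma_0-i+j) \geq h(\gamma^*) + h(\gamma_0)$; combining with the local maximality $h(\gamma_0-i+j) \leq h(\gamma_0)$ yields $h(\gamma^*-j+i) \geq h(\gamma^*) > h(\gamma_0)$, while $|(\gamma^*-j+i) \triangle \gamma_0| < |\gamma^* \triangle \gamma_0|$, contradicting minimality of the symmetric difference.

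The main obstacle I expect is keeping the asymmetric role of the homogenizing coordinate $n+1$ straight: both when enumerating exchanges out of $\gamma_0$ and when applying the tropical exchange axiom, I must track whether $n+1$ enters or leaves the basis, because this flips the value of $\pi$ between the minimization formula and zero per Proposition~\ref{proposition: tropical plucker vector}. Once this case split is handled carefully, the rest is standard valuated-matroid bookkeeping.
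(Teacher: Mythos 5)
Your proof is correct, but it takes a genuinely different route from the paper's. The paper first converts the one-exchange hypothesis into coordinatewise lower bounds $x_i\geq\max\lbrace \pi_{\sigma+j}:j\in\tau\text{ s.t. }\sigma+j-i\in M(B)\rbrace$ for $i\in\sigma$, and then verifies the inequality for an arbitrary basis $\gamma$ by hand: it expands $\pi_\gamma$ via its minimization formula, splits into cases according to where that minimum is attained, and runs a chain of symmetric basis exchanges in the \emph{ordinary} matroid $M(B)$ to match each $\pi_{\sigma+j_r}$ against the corresponding $x_{i_r}$. You instead recast the claim as global maximality of $\gamma_0=\sigma+(n+1)$ for $h(\gamma)=\langle e_\gamma,(x,0)\rangle-\pi_\gamma$, observe that the hypothesis (in fact only the sub-collection of bases containing $n+1$) gives local maximality under single-element exchange, and then upgrade local to global via the exchange inequality for the \emph{valuated} matroid $\pi$, with a descent on $\vert\gamma^*\triangle\gamma_0\vert$. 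Both arguments are sound; the one point you should make explicit is that the inequality $\pi_{\gamma^*}+\pi_{\gamma_0}\geq\pi_{\gamma^*-j+i}+\pi_{\gamma_0-i+j}$ (for some $i$ making both exchanges bases) is available here because $\pi$ is realizable --- it is the valuation of the maximal minors of $B^h$, so the inequality follows by valuating the classical Grassmann--Pl\"ucker relation; the three-term tropical Pl\"ucker relations alone are a weaker statement. Your approach is shorter and more conceptual (it is exactly the Dress--Wenzel local-to-global optimality principle for valuated matroids), at the cost of importing that machinery; the paper's approach is self-contained, uses only the ordinary symmetric basis exchange, and its intermediate step \eqref{eq:bound for wi} is reused to set up the companion Lemma \ref{claim: expression for vertex}, which your reformulation would still need to prove separately.
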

\begin{proof}
The ``only if'' direction is immediate; we show the ``if'' direction. We use the abbreviation $\hat{x}=(x,0)$. Let $\gamma\in M(B^h)$ be a basis such that $\vert\sigma\backslash\gamma\vert=1$. This basis can always be written as $\gamma=\sigma+j+k-i$ for some $i\in\sigma$ and $j,k\in\tau\cup(n+1)$ and, moreover, by the basis exchange axiom and the earlier observation that $M(B)$ has no coloops, such an exchange exists for every element $i\in\sigma$. We now rewrite the inequalities from the claim as
\begin{align}
&\langle e_\sigma,\hat{x}\rangle \geq \langle e_\gamma,\hat{x}\rangle-\pi_\gamma &\text{~$\forall\gamma\in M(B^h)$ s.t. $\vert\sigma\backslash\gamma\vert=1$}\nonumber
\\
\iff& \langle e_\sigma,\hat{x}\rangle \geq \langle e_\sigma,\hat{x}\rangle -\pi_{\sigma+j+k-i}+\pi_{\sigma+j}+\pi_{\sigma+k}-\hat{x}_i &\text{~$\forall \sigma+j+k-i\in M(B^h)$}\nonumber
\\
\iff& \hat{x}_i \geq \pi_{\sigma+j} + \pi_{\sigma+k} - \pi_{\sigma+j+k-i}\nonumber
\\
\Longrightarrow~&
\hat{x}_i \geq \max\lbrace \pi_{\sigma+j}\,:\, j\in\tau\text{~s.t.~} \sigma+j-i\in M(B)\rbrace &\text{~$\forall i\in\sigma$},\label{eq:bound for wi}
\end{align}
where the last implication follows from choosing $k=(n+1)$ and then maximizing over all possible choices of $j$. 

We now show that the inequalities \eqref{eq:bound for wi} for $\hat{x}_i$ imply the inequalities from the claim for all bases of $M(B^h)$. For any basis $\gamma\in M(B^h)$ with $k=\vert\sigma\backslash\gamma\vert\geq 2$, we can write
$$
\gamma = \sigma+j_1+\dots+j_k+j_{k+1} - i_1 - \dots - i_k
$$
with $\{j_1,\dots,j_{k+1} \}\subseteq \tau\cup(n+1)$ and $\{i_1,\dots,i_k \} \subseteq \sigma$ all distinct. In order to show that
\begin{align*}
\text{\eqref{eq:bound for wi}}&\,\,\,\Longrightarrow \langle e_\sigma,\hat{x}\rangle\geq \langle e_\gamma,\hat{x}\rangle - \pi_\gamma  &\forall\gamma\in M(B^h)
\end{align*}
we need to prove 
\begin{align*}
\text{\eqref{eq:bound for wi}}&\,\,\,\Longrightarrow\underbrace{\hat{x}_{i_1} + \dots + {\hat{x}}_{i_{k}}}_{(i)} \geq \underbrace{\pi_{\sigma+j_1}+\dots+\pi_{\sigma+j_{k+1}} - \pi_{\gamma}}_{(ii)} &\forall\gamma\in M(B^h).
\end{align*}
First, using \eqref{eq:bound for wi} we can bound the sum $(i)$ by
$$
(i)=\sum_{r\in [k]} \hat{x}_{i_r} \geq \sum_{r\in[k]}\max\lbrace \pi_{\sigma+j}\,:\, j\in\tau\text{~s.t.~}\sigma+j-i_{r}\in M(B)\rbrace.
$$
We will now show that this is an upper bound for $(ii)$. Recall that $\pi_{\gamma}$ is the following minimum:
$$
\pi_{\gamma} = \min\lbrace w_z \,:\, z\in\gamma\text{~s.t.~} \sigma+j_1+\dots+j_{k+1}-i_1-\dots-i_k-z\in M(B)\rbrace.
$$
There are two cases for the index $z$ at which the minimum is achieved: either $z=j_{k+1}$ (or, equivalently, any other element in $j_1,\dots,j_{k+1}$), or $z\in\sigma\setminus\lbrace i_1,\dots,i_k\rbrace$.
\\
\textbf{Case 1}: $\pi_\gamma$ achieves its minimum at $j_{k+1}$. First, we note that this implies that
$$
\pi_{\gamma} = w_{j_{k+1}} \geq \min\lbrace w_{j_{k+1}} \,,\, w_r \,:\, r\in\sigma \text{~s.t.~}\sigma+j_{k+1}-r\in M(B)\rbrace = \pi_{\sigma+j_{k+1}},
$$
which further implies that
$$
(ii) =\pi_{\sigma+j_1}+\dots+\pi_{\sigma+j_k}+(\pi_{\sigma+j_{k+1}}-\pi_{\gamma})\leq \pi_{\sigma+j_1}+\dots+\pi_{\sigma + {j_k}}.
$$
Second, if the minimum in $\pi_\gamma$ is achieved at $j_{k+1}$ then we must have $\gamma-j_{k+1}\in M(B)$ by definition of $\pi_{\gamma}$. By the symmetric basis exchange property applied to $\gamma-j_{k+1}$ and $\sigma$, there exists an index in $\lbrace j_1,\dots,j_k\rbrace$, say $j_k$, such that both $\gamma_k:=\gamma-j_{k+1}-j_k+i_k\in M(B)$ and $\sigma_k:=\sigma+j_k-i_k\in M(B)$. We repeat the symmetric basis exchange, now applied to $\gamma_k$ and $\sigma$. This says that there exists an index in $\lbrace j_1,\dots,j_{k-1}\rbrace$, say $j_{k-1}$, such that both $\gamma_{k-1}:=\gamma_{k}-j_{k-1}+i_{k-1}\in M(B)$ and $\sigma_{k-1}:=\sigma+j_{k-1}-i_{k-1}\in M(B)$. We repeat this until we obtain $\gamma_1=\sigma$, at which point we have shown
$$
\sigma_r = \sigma + j_{r} - i_{r}\in M(B) \text{~for all $r\in[k]$.}
$$
Taking these conditions on $j_1,\dots,j_k$ into account, we can further bound the sum $(ii)$ as 
$$
(ii)\leq \pi_{\sigma+j_1}+\dots + \pi_{\sigma+j_{k}} \leq \sum_{r\in[k]}\max\lbrace\pi_{\sigma+j}\,:\,j\in\tau\text{~s.t.~}\sigma+j-i_r\in M(B) \rbrace\leq (i).
$$
This completes the proof for the first case.
\\
\textbf{Case 2:} $\pi_{\gamma}$ achieves its minimum at $z\in\sigma\backslash \lbrace i_1,\dots,i_k\rbrace$. First, this implies that $\gamma-z\in M(B)$. By the symmetric basis exchange property applied to $\gamma-z$ and $\sigma$, we know that there exists an index in $\lbrace j_1,\dots,j_{k+1}\rbrace$, say $j_{k+1}$, such that both $\gamma-j_{k+1}\in M(B)$ and $\sigma+j_{k+1}-z\in M(B)$. The latter implies that
$$
\pi_{\sigma+j_{k+1}}=\min\lbrace w_r \,:\, r\in\sigma+j_{k+1}\text{~s.t.~} \sigma+j_{k+1}-r\in M(B)\rbrace\leq  w_z=\pi_{\gamma},
$$
and thus that
$$
(ii)=\pi_{\sigma+j_1}+\dots+\pi_{\sigma+j_k}+(\pi_{\sigma+j_{k+1}}-\pi_{\gamma})\leq \pi_{\sigma+j_1}+\dots+\pi_{\sigma+j_{k}}.
$$
Second, as in case 1 above, we can use the fact that $\gamma-j_{k+1}\in M(B)$ and repeated symmetric basis exchanges with $\sigma$ to show that $\sigma+j_{r}-i_r\in M(B)$ for all $r\in[k]$. This again implies that
$$
(i) \leq \sum_{r\in [k]}\max\lbrace \pi_{\sigma+j}\,:\, j\in\tau\text{~s.t.~} \sigma+j-i_r\in M(B)\rbrace \leq (ii).
$$
This completes the proof of the second case, and thus the proof of the claim.
\end{proof}
\begin{lemma}\label{claim: expression for vertex}
Fix $\tau\in M(A)$ and let $(x,0)\in\mathbb{R}^{n+1}$ with $x_j=\pi_{\sigma+j}$ for all $j\in\tau$. Then
the inequality $\langle e_\gamma,(x,0)\rangle-\pi_\gamma\leq\langle e_\sigma,(x,0)\rangle$ holds for all $\gamma\in M(B^h)$ with $\vert\sigma\backslash\gamma\vert=1$ if and only if $x_i\geq \max\lbrace \pi_{\sigma+j}:j\in\tau\text{~s.t.~}\sigma+j-i\in M(B)\rbrace$ for all $i\in\sigma$.
\end{lemma}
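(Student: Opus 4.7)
My proof proposal begins by reformulating the inequality. Any basis $\gamma\in M(B^h)$ with $|\sigma\setminus\gamma|=1$ can be written as $\gamma=\sigma+j+k-i$ for some $i\in\sigma$ and distinct $j,k\in\tau\cup\{n+1\}$. Using the hypothesis $x_j=\pi_{\sigma+j}$ for $j\in\tau$ together with the convention $\pi_{\sigma+(n+1)}=0$ from Proposition \ref{proposition: tropical plucker vector}, which matches the last coordinate of $(x,0)$, the inequality $\langle e_\gamma,(x,0)\rangle-\pi_\gamma\leq\langle e_\sigma,(x,0)\rangle$ reformulates as
\[
x_i\,\geq\,\pi_{\sigma+j}+\pi_{\sigma+k}-\pi_\gamma.
\]
From this point the two directions can be treated separately.

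The forward direction $(\Rightarrow)$ is essentially the content of the derivation ending at \eqref{eq:bound for wi} inside the preceding lemma: choosing $k=n+1$ makes $\pi_{\sigma+k}=\pi_\gamma=0$ and reduces the reformulated inequality to $x_i\geq\pi_{\sigma+j}$, valid whenever $\sigma+j-i\in M(B)$ (which is exactly the condition for $\gamma\in M(B^h)$ by Proposition \ref{proposition: matroid M(B')}); maximizing over all admissible $j\in\tau$ produces the claimed bound on $x_i$.

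For the reverse direction $(\Leftarrow)$, I would fix $\gamma=\sigma+j+k-i\in M(B^h)$ and split into cases. When $k=n+1$, membership $\gamma\in M(B^h)$ forces $\sigma+j-i\in M(B)$, and the inequality $x_i\geq\pi_{\sigma+j}$ follows directly from the hypothesis on $x_i$. When $j,k\in\tau$, let $z^*\in\gamma$ realize $\pi_\gamma=w_{z^*}$ with $\gamma-z^*\in M(B)$. If $z^*\in\{j,k\}$, say $z^*=j$, then $\gamma-j=\sigma+k-i\in M(B)$ gives $x_i\geq\pi_{\sigma+k}$ by hypothesis, and combining with $\pi_{\sigma+j}\leq w_j=\pi_\gamma$ (which holds because $\sigma\in M(B)$ makes $w_j$ admissible in the minimum defining $\pi_{\sigma+j}$) yields the bound.

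The main obstacle is the remaining subcase $z^*\in\sigma\setminus\{i\}$. Here I would apply the symmetric basis exchange axiom to the two $M(B)$-bases $\sigma$ and $\sigma+j+k-i-z^*$ at the element $i\in\sigma$ to obtain some $f\in\{j,k\}$ satisfying both $\sigma+f-i\in M(B)$ and $\sigma+(j+k-f)-z^*\in M(B)$. The former yields $x_i\geq\pi_{\sigma+f}$ by the hypothesis, and the latter yields $\pi_{\sigma+(j+k-f)}\leq w_{z^*}=\pi_\gamma$ by admissibility of $z^*$ in the minimum defining $\pi_{\sigma+(j+k-f)}$; adding these gives $x_i\geq\pi_{\sigma+j}+\pi_{\sigma+k}-\pi_\gamma$, completing the argument. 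The crucial point, which is where I expect the real difficulty to lie, is that the ordinary basis exchange axiom would supply only one of the two needed exchanges, whereas the symmetric version produces a single element $f$ witnessing both exchanges simultaneously and thus allows both inequalities to be extracted at once.
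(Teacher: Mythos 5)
Your proposal is correct and follows essentially the same route as the paper: the same reformulation to $x_i\geq\pi_{\sigma+j}+\pi_{\sigma+k}-\pi_{\sigma+j+k-i}$, the same specialization $k=n+1$ for one direction, and the same case split on whether the minimizer defining $\pi_\gamma$ lies in $\{j,k\}$ or in $\sigma-i$, with the symmetric basis exchange supplying both needed exchanges in the latter case exactly as in the paper. The only difference is presentational (the paper phrases the argument as computing the maximum $(\star)$ via matching upper and lower bounds rather than verifying each inequality individually).
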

\begin{proof}
We write $\hat{x}=(x,0)\in\R^{n+1}$. As in the proof of Lemma \ref{claim: one-dimensional inequalities imply all others}, we know that 
\begin{align*}
&\langle e_\sigma,\hat{x}\rangle\geq\langle e_\gamma,\hat{x}\rangle-\pi_{\gamma} &\text{~$\forall\gamma\in M(B^h)$}
\\
\iff & \hat{x}_i\geq \pi_{\sigma+j}+\pi_{\sigma+k}-\pi_{\sigma+j+k-i} &\text{~$\forall\sigma+j+k-i\in M(B^h)$}
\\
\iff & \hat{x}_i\geq \underbrace{\max\lbrace \pi_{\sigma+j}+\pi_{\sigma+k}-\pi_{\sigma+j+k-i}\,:\,j,k\text{~s.t.~}\sigma+j+k-i\in M(B^h)\rbrace}_{(\star)} &\text{~$\forall i\in\sigma$}.
\end{align*}
We now show that $(\star)=\max\lbrace \pi_{\sigma+j}\,:\,j\in\tau\cup(n+1)\text{~s.t.~}\sigma+j-i\in M(B)\rbrace$ to prove the claim. First, by choosing $k=(n+1)$ such that $\pi_{\sigma+k}=\pi_{\sigma+j+k-i}=0$, we find that
\begin{equation}\label{eq: lower bound for star}
(\star)\geq \max\lbrace \pi_{\sigma+j}\,:\,j\in\tau\text{~s.t.~}\sigma+j-i\in M(B)\rbrace.
\end{equation}
Second, assume that the maximum in $(\star)$ is achieved for some $j',k'$ and recall that 
$$
\pi_{\sigma+j'+k'-i} = \min\lbrace w_z\,:\, z\in\sigma+j'+k'\text{~s.t.~}\sigma+j'+k'-i-z\in M(B)\rbrace.
$$
There are two cases for the index $z$ at which the minimum is achieved: either $z=k'$ (which is equivalent to $z=j'$) or $z\in\sigma-i$.\\
\textbf{Case 1:} $\pi_{\sigma+j'+k'-i}$ achieves its minimum at $k'$. First, this immediately implies that 
$$
\pi_{\sigma+j'+k'-i} = w_{k'} \geq \min\lbrace w_{k'}\,,\, w_r : r\in\sigma\text{~s.t.~} \sigma+k'-r\in M(B)\rbrace = \pi_{\sigma+k'}
$$
and thus that
$$
(\star)=\pi_{\sigma+j'}+(\pi_{\sigma+k'}-\pi_{\sigma+j'+k'-i})\leq \pi_{\sigma+j'}.
$$
Furthermore, since the minimum is achieved at $k'$, this implies that $\sigma+j'-i\in M(B)$ and thus that
$$
(\star)\leq\pi_{\sigma+j'}\leq \max\lbrace \pi_{\sigma+j}\,:\,j\in\tau\text{~s.t.~}\sigma+j-i\in M(B)\rbrace.
$$
Together with \eqref{eq: lower bound for star}, this implies that $(\star)=\max\lbrace \pi_{\sigma+j}:j\in\tau\text{~s.t.~}\sigma+j-i\in M(B)\rbrace$, which proves the claim in this case.
\\
\textbf{Case 2:} $\pi_{\sigma+j'+k'-i}$ achieves its minimum at $z\in\sigma-i$. 
This implies that $\sigma+j'+k'-i-z\in M(B)$. By the symmetric basis exchange property applied to this basis and $\sigma$, we find that there exists an index in $\lbrace j',k'\rbrace$, say $k'$, such that both $\sigma+k'-z\in M(B)$ and $\sigma+j'-i \in M(B)$. Since $\sigma+k'-z\in M(B)$ we find that
$$
\pi_{\sigma+k'}=\min\lbrace w_r \,:\,r\in\sigma+k'\text{~s.t.~}\sigma+k'-r\in M(B)\rbrace\leq w_z =\pi_{\sigma+j'+k'-i}
$$
and thus that
$$
(\star)=\pi_{\sigma+j'}+(\pi_{\sigma+k'}-\pi_{\sigma+j'+k'-i})\leq \pi_{\sigma+j'}.
$$
Furthermore, because $\sigma+j'-i\in M(B)$ holds we can further write
$$
(\star)\leq\pi_{\sigma+j'}\leq\max\lbrace\pi_{\sigma+j}\,:\,j\in\tau\text{~s.t.~}\sigma+j-i\in M(B)\rbrace.
$$
Together with \eqref{eq: lower bound for star}, this implies that $(\star)=\max\lbrace \pi_{\sigma+j}:j\in\tau\text{~s.t.~}\sigma+j-i\in M(B)\rbrace$ also for this case, and this completes the proof of the claim.
\end{proof}
\begin{example}[Hirzebruch surface] \label{ex: Hirzebruch}
We give a detailed example that illustrates the computation of $w^{(\tau)}$ as defined in Theorem \ref{theorem: cones in LAu}. 
We consider a Hirzebruch surface with data
$$
A = \begin{pmatrix}
1&1&1&1&1&1\\
0&1&0&1&2&3\\
0&0&1&1&1&1
\end{pmatrix}
\quad\text{~and~}\quad w=(6,8,7,6,4,0)\quad\quad\quad \raisebox{-0.35\height}{\includegraphics[width=0.13\textwidth]{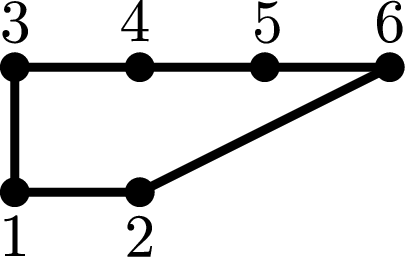}}
$$ 
The matroid $M(A)$ is a rank $3$ matroid on $[6]$ with $16$ bases out of the possible $20=\binom{6}{3}$. The four non-bases are $\lbrace 345,346,356,456\rbrace$ and correspond to the collinear triples of points \rev{among $\{a_i\}_{i\in [n]}$} in $Q_A$. We first consider the basis $256\in M(A)$ and compute the vertex $w^{(256)}$:
\begin{align*}
 w_2^{(256)} &= \min \{w_2, w_1,\cancel{w_3},\cancel{w_4}\} = 6 
 & w_1^{(256)} &= \max \{w_2^{(126)}, w_5^{(126)}, w_6^{(126)}\} = 6 \\
 w_5^{(256)} &= \min\{w_5, w_1,w_3,w_4\} = 4 & w_3^{(256)} &= \max\{\cancel{w_2^{(126)}}, w_5^{(126)}, w_6^{(126)}\} = 4 \\ 
 w_6^{(256)} &= \min\{w_6, w_1,w_3,w_4\} = 0 & w_4^{(256)} &= \max\{\cancel{w_2^{(126)}}, w_5^{(126)}, w_6^{(126)}\} = 4. 
\end{align*}
We briefly explain the terms that do not appear in the minimization or maximization. Recall that for $j\in 256$, $w^{(256)}_j$ is the minimum of $w_j$ and all $w_i$ with $i\in 134$ such that $256-j+i\in M(A)$. For $w^{(256)}_2$ we have that $256-2+3=356$ is not a basis of $M(A)$, hence $w_3$ does not appear in this minimization, and similarly $256-2+4=456$ is not a basis of $M(A)$ and thus $w_4$ does not appear. For $i\in 134$, $w^{(256)}_i$ is the maximum of $w_j^{(256)}$ for all $j\in 256$ such that $256-j+i\in M(A)$. For $w_3^{(256)}$ we have that $256-2+3=356$ is not a basis of $M(A)$ and thus $w_2^{(126)}$ does not appear in the maximization. For the same reason, $w_2^{(126)}$ does not appear in the maximization formula for $w_4^{(256)}$. We conclude that $w^{(256)} = (6,\, 6,\, 4,\, 4,\, 4,\, 0)$. 

For the basis $126\in M(A)$, we compute the vertex $w^{(126)}$ as:
\begin{align*}
 w_1^{(126)} &= \min(w_1, w_3,w_4,w_5) = 4 & w_3^{(126)} &= \max(w_1^{(126)}, w_2^{(126)}, w_6^{(126)}) = 4 \\
 w_2^{(126)} &= \min(w_2, w_3,w_4,w_5) = 4 & w_4^{(126)} &= \max(w_1^{(126)}, w_3^{(126)}, w_6^{(126)}) = 4 \\
 w_6^{(126)} &= \min(w_6, w_3,w_4,w_5) = 0 & w_5^{(126)} &= \max(w_1^{(126)}, w_2^{(126)}, w_6^{(126)}) = 4. 
\end{align*}
We conclude that $w^{(126)} = (4,\, 4,\, 4,\, 4,\, 4,\, 0)$.
\end{example}

\begin{example}\label{example: square with central point}
We give an example to illustrate that some vertices in the tropical affine space $L_{A,u}$ may not appear as $w^{(\tau)}$ corresponding to a basis $\tau\in M(A)$.
For this, we consider the tropical affine space $L_{A,u}$ defined by 
$$
A = \begin{pmatrix}
1&1&1&1&1\\
1&0&2&2&0\\
1&0&0&2&2
\end{pmatrix}\quad\text{~and~}\quad w=(0,1,2,3,4)\quad\quad\quad\raisebox{-0.35\height}{\includegraphics[width=0.09\textwidth]{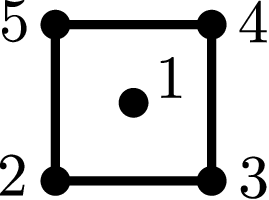}}
$$
The matroid $M(A)$ has $8$ bases. These are all \rev{the} $3$-element subsets of $[5]$ except $124$ and $135$. The tropical affine space $L_{A,u}$
is a pure two-dimensional polyhedral complex with four vertices
$$ v_0 = (0, \, 0, \, 0, \, 0, \,0), \quad  v_1 = (0, \, 1, \, 2, \, 1, \,2), \quad 
v_2 = (0, \, 1, \, 0, \, 1, \,0), \quad
v_3 = (0, \, 0, \, 2, \, 0, \,2).$$
The maximal faces consist of the bounded 
face $\conv(v_0,v_1,v_2,v_3)$ and the six 
faces $\conv(v_0,v_2) + \pos(e_1)$, $\conv(v_0,v_3) + \pos(e_1)$, $\conv(v_1,v_2) + \pos(e_2)$, $\conv(v_1,v_2) + \pos(e_4)$, $\conv(v_1,v_3) + \pos(e_3)$, and $\conv(v_1, v_3) + \pos(e_5)$ together
with the eight cones $C_\tau$ corresponding to the bases of $M(A)$. The vertices of these
eight cones are 
\begin{align*}
v_1 &=w^{(125)}=w^{(145)}=w^{(134)}=w^{(123)},\\
v_2 &=w^{(235)}=w^{(345)}, \\
v_3 &=w^{(245)}=w^{(234)}.
\end{align*}
The fourth vertex $v_0 = (0,0,0,0,0)$ does not appear as $w^{(\tau)}$ for any $\tau\in M(A)$.
\end{example}
%%%%

\subsection{The $\tau$-operator}
To conclude this section, we spend a bit more time on the expression of the vertices $w^{(\tau)}$. To this end, we study properties of the endomorphism of $\R^n$ that maps a tropical data vector $w$ to $w^{(\tau)}$. 
\begin{definition}
Let $M$ be a matroid on $[n]$ and let $\tau$ be a basis of $M$. The $\tau$-operator is the map $(\,\cdot\,)^{(\tau)}:\R^n\rightarrow\R^n$ where
$$
x\mapsto \begin{cases}
x^{(\tau)}_j = \min\lbrace x_j\,,\, x_i\,:\, i\in [n]\setminus\tau\text{~s.t.~}\tau-j+i\in M \rbrace &\textup{~for all $j\in\tau$}\\
x^{(\tau)}_i = \max\lbrace x_j^{(\tau)} \,:\, j\in\tau\text{~s.t.~}\tau-j+i\in M\rbrace &\textup{~for all $i\in[n]\backslash\tau$}.
\end{cases}
$$
\end{definition}
\begin{proposition}
The $\tau$-operator is 
\begin{itemize}
    \item[i)] piecewise linear and continuous,
    \item[ii)] shift invariant: $(x+\alpha\cdot \mathbbm{1})^{(\tau)}=x^{(\tau)}+\alpha\cdot \mathbbm{1},$
    \item[iii)] contractive: $\min \{x_i \, : \, i \in \tau\}  \leq x^{(\tau)}_{\rev{j}}\leq x_{\rev{j}}$ \rev{for all $j\in[n]$}, and 
    \item[iv)] idempotent: $(x^{(\tau)})^{(\tau)}=x^{(\tau)}$
\end{itemize}
for all $x\in \R^n$ and $\alpha\in\R$.
\end{proposition}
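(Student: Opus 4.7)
I will verify the four properties one at a time, leveraging only the definition of the $\tau$-operator and the basis-exchange axiom for $M$.

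\textbf{Piecewise linearity and continuity (i), and shift invariance (ii).} For each $j\in\tau$, the coordinate $x_j^{(\tau)}$ is a minimum of a finite collection of coordinate projections, hence continuous and piecewise linear in $x$. For each $i\in[n]\setminus\tau$, $x_i^{(\tau)}$ is a maximum of a finite collection of functions of the form $x_j^{(\tau)}$, which inherits these properties. Shift invariance is immediate, since $\min\{x_j+\alpha,x_i+\alpha:\dots\}=\min\{x_j,x_i:\dots\}+\alpha$ and similarly for the maximum, so both defining expressions commute with adding the constant $\alpha$ to every coordinate.

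\textbf{Contractivity (iii).} For the upper bound $x^{(\tau)}\leq x$, the inequality is clear for $j\in\tau$ since $x_j$ appears in the minimization defining $x_j^{(\tau)}$. For $i\in[n]\setminus\tau$, I would show that every term $x_j^{(\tau)}$ in the maximum defining $x_i^{(\tau)}$ satisfies $x_j^{(\tau)}\leq x_i$: by the condition $\tau-j+i\in M$, the coordinate $x_i$ appears among the terms minimized to produce $x_j^{(\tau)}$, so $x_j^{(\tau)}\leq x_i$. Taking the maximum over valid $j$ gives $x_i^{(\tau)}\leq x_i$. For the lower bound, I would use shift invariance (property (ii)) to reduce to the case where all coordinates of $x$ are nonnegative, by subtracting $\alpha:=\min\{x_k:k\in[n]\}$ times $\mathbbm{1}$. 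In that reduced case all the minima and maxima involved are nonnegative, so $x^{(\tau)}\geq 0$ coordinate-wise; shifting back yields $x^{(\tau)}\geq \alpha\,\mathbbm{1}$, which gives the claimed lower bound (with $\min$ taken over the appropriate index set).

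\textbf{Idempotence (iv).} Set $y:=x^{(\tau)}$ and compute $y^{(\tau)}$. For $j\in\tau$ and any $i\in[n]\setminus\tau$ with $\tau-j+i\in M$, the maximum defining $y_i=x_i^{(\tau)}$ explicitly includes the term $x_j^{(\tau)}=y_j$, so $y_i\geq y_j$. Therefore the minimum in the formula for $y_j^{(\tau)}$ is achieved at $y_j$ itself, giving $y_j^{(\tau)}=y_j$. For $i\in[n]\setminus\tau$, the formula then becomes $y_i^{(\tau)}=\max\{y_j^{(\tau)}:j\in\tau,\tau-j+i\in M\}=\max\{y_j:j\in\tau,\tau-j+i\in M\}=\max\{x_j^{(\tau)}:j\in\tau,\tau-j+i\in M\}=x_i^{(\tau)}=y_i$. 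Hence $(x^{(\tau)})^{(\tau)}=x^{(\tau)}$.

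The potential obstacle is property (iii): a priori the lower bound is not coordinate-wise obvious because the minimum in $x_j^{(\tau)}$ ranges over coordinates $x_i$ with $i\notin\tau$, which could in principle be smaller than every $x_k$ with $k\in\tau$. The clean resolution is to reduce to the nonnegative case via the shift invariance already proved in (ii), rather than trying to prove the lower bound directly from the basis-exchange structure. The other three items follow cleanly from the definition and a single application of the basis-exchange axiom.
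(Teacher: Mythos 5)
Your proof is correct and follows essentially the same route as the paper's: the same basis-exchange observation (that $x_i$ appears among the terms minimized to form $x_j^{(\tau)}$ whenever $\tau-j+i\in M$) for the upper bound, the same shift-to-nonnegative reduction for the lower bound, and the same reduction of idempotence to the coordinates indexed by $\tau$. One remark: exactly like the paper's own argument, your shift argument yields the lower bound $\min\{x_k : k\in[n]\}\cdot\mathbbm{1}\leq x^{(\tau)}$ rather than the stated $\min\{x_i : i\in\tau\}$ (which is false in general, e.g.\ for the uniform rank-one matroid on $\{1,2\}$ with $\tau=\{1\}$ and $x=(5,0)$), and you correctly flagged this with your parenthetical about the index set.
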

\begin{proof}
\emph{i)} This follows because the $\tau$-operator is linear on the closed cones defined by the inequalities $x_{\mu(1)} \leq x_{\mu(2)} \leq \cdots \leq x_{\mu(n)}$ for each permutation $\mu$ of $[n]$. 

\emph{ii)} Follows directly from the definition. By this invariance, the $\tau$-operator can also be defined as an endomorphism of $\R^n/\R \mathbbm{1}$.

\emph{iii)} By shift invariance we assume $x\geq 0$. The entries of $x^{(\tau)}$ are a subset of the entries of $x$ and thus $x\geq 0$ implies $x^{(\tau)}\geq 0$. Also, for $j\in\tau$ we have

$$x^{(\tau)}_j=\min\lbrace x_j\,,\, x_i\,:\, i\in [n]\setminus\tau\text{~s.t.~}\tau-j+i\in M\rbrace\leq x_j.$$
Now let $i\in\sigma \rev{= [n] \setminus \tau} $ and define $\mathcal{J}_i=\lbrace j\in\tau\,:\, \tau-j+i\in M\rbrace$. We have $x^{(\tau)}_j\leq x_i$ for all $j\in\mathcal{J}_i$ and thus
$x^{(\tau)}_i = \max\lbrace x^{(\tau)}_j\,:\, j\in\mathcal{J}_i\rbrace\leq x_i$.
This implies $x^{(\tau)}_{\rev{j}}\leq x_{\rev{j}}$ \rev{for all $j$} as required.

\emph{iv)} Let $x\in\mathbb{R}^n$, $y=x^{(\tau)}$ and $z=y^{(\tau)}=(x^{(\tau)})^{(\tau)}$. To show that $z=y$, it suffices to prove that $z_j=y_j$ for all $j\in\tau$. By contractivity we know that $z_j\leq y_j$. Fix $j\in\tau$ and let $\mathcal{I}_j=\lbrace i\in[n]\setminus\tau\,:\,\tau-j+i\in M\rbrace$. For all $i\in\mathcal{I}_j$ we have
$$
y_i=\max\lbrace y_k\,:\, k\in\tau\text{~s.t.~}\tau-k+i\in M\rbrace\geq y_j.
$$
Thus we find
$z_j=\min\lbrace y_i:i\in\mathcal{I}_j\rbrace\geq y_j$
which completes the proof.
\end{proof}
\begin{proposition}
The image of the cone $C_\tau\subseteq L_{A,u}$ 
under the $\tau$-operator
is its vertex $w^{(\tau)}$.
\end{proposition}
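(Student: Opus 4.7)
The plan is to show directly that for every $x\in C_\tau$, we have $x^{(\tau)}=w^{(\tau)}$, which immediately gives that the image of the cone under the $\tau$-operator is the single point $\{w^{(\tau)}\}$. Since $C_\tau=w^{(\tau)}+\pos(e_i\mid i\in\sigma)$ with $\sigma=[n]\setminus\tau$, any $x\in C_\tau$ has the form $x_j=w^{(\tau)}_j$ for all $j\in\tau$ and $x_i=w^{(\tau)}_i+\lambda_i$ with $\lambda_i\geq 0$ for all $i\in\sigma$.

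First I would compute $x^{(\tau)}_j$ for $j\in\tau$. By definition,
\[
x^{(\tau)}_j=\min\{x_j,\ x_i:i\in\sigma\text{ s.t. }\tau-j+i\in M(A)\}.
\]
The key observation is that for any $i\in\sigma$ with $\tau-j+i\in M(A)$, the definition of $w^{(\tau)}_i$ as a maximum over an index set containing $j$ gives $w^{(\tau)}_i\geq w^{(\tau)}_j$, so $x_i=w^{(\tau)}_i+\lambda_i\geq w^{(\tau)}_j=x_j$. Hence the minimum is attained at $x_j$ and $x^{(\tau)}_j=w^{(\tau)}_j$.

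Next I would handle $i\in\sigma$. Since we have just shown $x^{(\tau)}_j=w^{(\tau)}_j$ for all $j\in\tau$, the definition of the $\tau$-operator applied to $x$ yields
\[
x^{(\tau)}_i=\max\{x^{(\tau)}_j:j\in\tau\text{ s.t. }\tau-j+i\in M(A)\}=\max\{w^{(\tau)}_j:j\in\tau\text{ s.t. }\tau-j+i\in M(A)\}=w^{(\tau)}_i,
\]
where the last equality is precisely the defining formula for $w^{(\tau)}_i$. Combining both cases gives $x^{(\tau)}=w^{(\tau)}$ for every $x\in C_\tau$, which proves the proposition.

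There is no real obstacle here: the statement is essentially an immediate consequence of the definition of $w^{(\tau)}$ together with the defining inequalities of the cone $C_\tau$. The only subtlety, which it is worth flagging, is the compatibility between the min-definition of $w^{(\tau)}_j$ for $j\in\tau$ and the max-definition of $w^{(\tau)}_i$ for $i\in\sigma$, which ensures $w^{(\tau)}_i\geq w^{(\tau)}_j$ whenever $\tau-j+i\in M(A)$; this is what makes the minimum in the first step collapse to $x_j$. One might also note that this proposition is consistent with the idempotency of the $\tau$-operator proved just above, since $w^{(\tau)}$ itself lies in $C_\tau$.
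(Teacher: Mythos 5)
Your proof is correct and follows essentially the same route as the paper: show that for $j\in\tau$ the minimum defining $x^{(\tau)}_j$ collapses to $x_j=w^{(\tau)}_j$ because $w^{(\tau)}_i\geq w^{(\tau)}_j$ whenever $\tau-j+i\in M(A)$, and then the maximum defining $x^{(\tau)}_i$ for $i\in\sigma$ reproduces $w^{(\tau)}_i$. The only cosmetic difference is that you read the key inequality $w^{(\tau)}_i\geq w^{(\tau)}_j$ directly off the max-definition of $w^{(\tau)}_i$, whereas the paper derives it from idempotence of the $\tau$-operator; both are valid.
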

\begin{proof}
By \rev{contractivity} we find for all $j\in\tau$ that
$$
w^{(\tau)}_j\rev{\leq\min\{w_j, w_i^{(\tau)}:i\in\sigma\text{~s.t.~}\sigma+j-i\in M(B)\}} \leq w^{(\tau)}_i \qquad\forall i\in\sigma\text{~s.t.~}\sigma+j-i\in M(B).
$$
Let $y=w^{(\tau)}+\sum_{i\in\sigma}\lambda_ie_i\in C_\tau$ with $\lambda\geq 0$. We then find again for all $j\in\tau$ that
$$
y_j=w^{(\tau)}_j\leq w^{(\tau)}_i\leq y_i\qquad\forall i\in\sigma\text{~s.t.~}\sigma+j-i\in M(B).
$$
This implies $y^{(\tau)}_j = y_j = w^{(\tau)}_j$ for all $j\in\tau$ and thus $y^{(\tau)}=w^{(\tau)}$, which completes the proof.
\end{proof}
\begin{question}
The underlying matroid for the tropical affine space $L_{A,u}$ is representable. It would be good to know how much is true otherwise. For instance, does the function $\pi$ in Proposition \ref{proposition: tropical plucker vector} satisfy the tropical Pl\"{u}cker relations \cite[Equation 4.4.2]{maclagan_2015_introduction} if $M$ is not representable? If the answer is positive, then $w\in\R^n$ induces a tropical affine space $L$ associated with any matroid $M$. Is $w^{(\tau)}$ a vertex of $L$ in this case? 
\end{question}

%%%%
\section{Subdivisions and tropical critical 
points} \label{section: subdivisions and tropical critical points}
We now apply our results on $L_{A,u}$ to the tropical toric maximum likelihood estimation problem. We study the intersection of $\row(A)$ with the cones $C_\tau =w^{(\tau)}+\pos(e_i\mid i\in[n]\backslash\tau)$ which, if this intersection occurs, contributes a tropical critical point that is a linear function of the vertex $w^{(\tau)}$. Furthermore, the bases $\tau$ for which $C_\tau$ contributes a tropical critical point, and the multiplicity of this point, can be related to the geometry of $Q_A$ by certain subdivisions. With these results, we find explicit expressions for the tropical critical points in terms of the tropical data vector in a number of general situations.

We start with the relevant background on polyhedral subdivisions. For $A=(a_1~\cdots~a_n)$ with columns indexed by $[n]$, we write $\conv_A(\sigma):=\conv(a_i\mid i\in\sigma)$ for $\sigma\subseteq[n]$. We say that $\tau\subseteq \sigma$ is a \emph{face} of $\sigma$ if there exists a linear form $\psi$ on $\R^k$ such that $(\psi(a_i))_{i\in\sigma}$ is maximized precisely for $i\in\tau$.

\begin{definition}[polyhedral subdivision]
Let $A\in\Z^{k\times n}$ and $Q_A=\conv_A([n])\subseteq \R^k$. A \textit{polyhedral subdivision} $\Delta$ of $Q_A$ is a collection of subsets $\sigma$ of $[n]$, which are called \emph{cells} (and are identified with $\conv_A(\sigma)$),
that satisfy:
\begin{itemize}
    \item {Closure:} If $\sigma\in\Delta$ and $\tau$ is a face of $\sigma$, then $\tau\in\Delta$. 
    \item {Union:} $\bigcup_{\sigma\in\Delta}\conv_A(\sigma) = Q_A$.
    \item {Intersection:} If $\sigma_1,\sigma_2\in\Delta$ distinct, then $\operatorname{relint}(\conv_A(\sigma_1))\cap\operatorname{relint}(\conv_A(\sigma_2))=\emptyset$.
\end{itemize}
\end{definition}
All subdivisions in this article are polyhedral so we will omit the adjective. A subdivision is called a \textit{triangulation} if all cells are simplices, i.e., if $\lbrace a_i\rbrace_{i\in\sigma}$ is affinely independent for every cell $\sigma$. We say that a subset $\tau\subseteq [n]$ \textit{lies in a cell} of $\Delta$ if there exists a cell $\sigma$ such that $\tau\subseteq\sigma$. A subdivision $\Delta'$ of $Q_A$ \textit{refines} another subdivision $\Delta$ if every cell of $\Delta'$ lies in a cell of $\Delta$. A cell in a subdivision is called \emph{maximal} if it is maximal with respect to the subset partial order and the \textit{volume} of a maximal simplex $\tau$ is $\vol_A(\tau):=\vol(\conv_A(\tau))=\vert\det(A_\tau)\vert$. We note that the maximal simplices in a triangulation of $Q_A$ are bases of the matroid $M(A)$.

The following class of subdivisions will be particularly relevant. A \emph{regular subdivision} $\Delta_{\omega}$ induced by weights $(\omega_i)_{i\in[n]}$ is a subdivision whose cells are given by the lower faces of the convex hull of the lifted points $\lbrace(a_i,\omega_i)\rbrace_{i\in[n]}\subseteq\R^{n+1}$. The following theorem can serve as a definition.
\begin{theorem}[{\cite[Theorem 2.3.20, Corollary 5.2.7]{deloera_2010_triangulations}}]\label{theorem: definition of regular subdivision}
The regular subdivision $\Delta_{\omega}$ of $Q_A$ induced by the weight vector $\omega$ satisfies the entrywise inequality
$$
A^T(A_\tau^T)^{-1}\omega_{\tau}\leq \omega
$$
for all simplices $\tau$ that lie in a cell of $\Delta_{\omega}$. Equality holds for the $i$th entry if and only if $\tau+i$ lies in a cell of $\Delta_{\omega}$.
\end{theorem}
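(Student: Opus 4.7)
The plan is to translate the hypothesis that $\tau$ lies in a cell of $\Delta_\omega$ into the existence of an affine function interpolating $\omega$ on $\tau$ and bounded above by $\omega$ everywhere else, and then solve for this function explicitly using that $A_\tau$ is invertible. The standing assumption $(1,\ldots,1)\in\row(A)$ will allow me to work with a linear rather than an affine interpolant, which is what turns the solution into the expression $A^T(A_\tau^T)^{-1}\omega_\tau$.

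First I would recall the standard characterization of cells in a regular subdivision: a subset $\sigma\subseteq[n]$ is a cell of $\Delta_\omega$ if and only if there exists an affine function $\phi\colon\R^k\to\R$ with $\phi(a_i)=\omega_i$ for $i\in\sigma$ and $\phi(a_i)\leq\omega_i$ for all $i\in[n]$. This follows directly from the definition of cells as projections of lower faces of $\conv\{(a_i,\omega_i):i\in[n]\}$, since such a $\phi$ is precisely the function whose graph is the supporting hyperplane of the lower face. Using $(1,\ldots,1)\in\row(A)$, I pick $u\in\R^k$ with $\langle u,a_j\rangle=1$ for all $j$; any constant term in $\phi$ can then be absorbed by replacing the linear part $c$ by $c+du$, so without loss of generality $\phi(x)=\langle c,x\rangle$ is linear.

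Now suppose the simplex $\tau$ lies in some cell $\sigma\supseteq\tau$ of $\Delta_\omega$ with supporting linear functional $\phi(x)=\langle c,x\rangle$. The identities $\langle c,a_i\rangle=\omega_i$ for $i\in\tau$ read $A_\tau^T c=\omega_\tau$. Because $\tau$ is a simplex with $k$ elements whose points $\{a_i\}_{i\in\tau}$ are affinely independent and lie in the affine hyperplane $\langle u,\cdot\rangle=1$ avoiding the origin, they are also linearly independent, so $A_\tau$ is invertible. Hence $c=(A_\tau^T)^{-1}\omega_\tau$, and applying $A^T$ to both sides together with the bounds $\phi(a_i)\leq\omega_i$ yields the entrywise inequality $A^T(A_\tau^T)^{-1}\omega_\tau\leq\omega$. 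For the equality clause, the $i$-th entry coincides with $\omega_i$ precisely when $\phi(a_i)=\omega_i$, i.e.\ when $i\in\sigma$, which by definition is exactly what ``$\tau+i$ lies in a cell of $\Delta_\omega$'' means. The only real content is the affine-to-linear reduction via the all-ones vector; once this is in place, the argument is a matter of unfolding definitions and inverting $A_\tau$.
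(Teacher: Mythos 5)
The paper does not actually prove this statement—it is imported verbatim from De Loera–Rambau–Santos with the remark that it ``can serve as a definition''—so there is no in-paper argument to compare against; what matters is whether your self-contained derivation is sound, and it essentially is. Your route is the standard one: identify cells of $\Delta_\omega$ with lower faces of the lifted polytope via supporting affine functionals, use $(1,\dots,1)\in\row(A)$ to absorb the constant term and work with a linear interpolant, and solve $A_\tau^T c=\omega_\tau$ using invertibility of $A_\tau$ (your observation that affine independence together with lying on the hyperplane $\langle u,\cdot\rangle=1$ forces linear independence is exactly the right justification for that invertibility). One imprecision is worth repairing: the criterion ``there exists affine $\phi$ with $\phi(a_i)=\omega_i$ on $\sigma$ and $\phi\le\omega$ everywhere'' characterizes $\sigma$ \emph{lying in} a cell, not \emph{being} a cell (a single index whose point sits in the relative interior of a maximal cell satisfies it but is not a face of the subdivision). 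This looseness surfaces in your equality clause, where you identify ``$\phi(a_i)=\omega_i$'' with ``$i\in\sigma$'' and then with ``$\tau+i$ lies in a cell.'' To make that step airtight, note that because $A_\tau$ is invertible the linear functional is \emph{uniquely} determined by $\tau$, namely $\phi=\langle(A_\tau^T)^{-1}\omega_\tau,\cdot\,\rangle$, and that the full equality locus $\sigma^*=\{j:\phi(a_j)=\omega_j\}$ is itself a cell of $\Delta_\omega$, being the index set of the lower face cut out by the supporting hyperplane $y=\phi(x)$. Then equality in the $i$th entry holds iff $i\in\sigma^*$ iff $\tau+i\subseteq\sigma^*$, which yields both directions of the biconditional; the converse direction also follows from uniqueness of $\phi$, since any cell containing $\tau+i$ must be supported by this same functional. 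With that small repair your proof is complete and correct.
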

We now connect the tropical intersection problem to regular subdivisions of $Q_A$. 

\ConeIntersection*
\begin{proof}
The linear subspace $\row(A)$ and the affine span of $C_\tau$ have complementary dimension. \rev{Since $A_\tau$ is invertible, these subspaces together span $\R^n$ and thus}  intersect in precisely one point ${q}=A^Tx$. Because $A_\tau$ is invertible, this point is determined by
$$
A_\tau^T x = w^{(\tau)}_\tau\iff x=(A_\tau^T)^{-1}w^{(\tau)}_\tau\iff{q} = A^T(A_\tau^T)^{-1}w^{(\tau)}_{\tau}.
$$
It remains to check whether this point lies in $C_\tau$. This is equivalent to ${q}\geq w^{(\tau)}$ and thus $A^T(A^T_{\tau})^{-1}w^{(\tau)}_{\tau}\geq w^{(\tau)}$ \rev{, where equality for the entries in $\tau$ follows by construction}. This shows that $\row(A)$ intersects $C_\tau$ if and only if $\tau$ lies in a cell of the regular subdivision $\Delta_{-w^{(\tau)}}$. For the multiplicity of ${q}$, let $\sigma=\lbrace i_1,\dots,i_{n-k}\rbrace$ and compute
$$\operatorname{mult}(q) \, = \, \big\vert \det\big(
A^T \mid e_{i_1}\mid \dots \mid e_{i_{n-k}}
\big)\big\vert \, = \,  \big\vert\det\big(A_{[n]\setminus \sigma}^T\big) \big\vert \, = \, \vol_A(\tau).$$
\end{proof}
\begin{example}[Hirzebruch surface revisited]
To illustrate Proposition \ref{proposition:subdivision condition for cone intersection} we return to the Hirzebruch surface of Example \ref{ex: Hirzebruch}. For tropical data vector $w=(6,8,7,6,4,0)$, we computed that the
cone $C_{256}$ has vertex $w^{(256)} = (6,6,4,4,4,0)$. The maximal cells of the subdivision $\Delta_{-w^{(256)}}$ are $12345$ and $256$; in particular $256$ lies in a cell of this subdivision. Therefore,
$$
\hat{q}(256) = A^T(A_{256}^T)^{-1}(6,4,0)^T = (10,6,12,8,4,0)^T = w^{(256)} + 4e_1 + 8e_3 + 4e_4
$$
is a tropical critical point with 
multiplicity 
$$
\det \begin{pmatrix}
1&1&1&1&1&1\\
0&1&0&1&2&3\\
0&0&1&1&1&1 \\
1&0&0&0&0&0 \\
0&0&1&0&0&0 \\
0&0&0&1&0&0
\end{pmatrix} = 1.
$$
In contrast, the cone $C_{126}$ has vertex
$w^{(126)} = (4,4,4,4,4,0)$ and 
the maximal cells of the subdivision $\Delta_{-w^{(126)}}$ are again $12345$ and $256$. Since $126$ does not lie in any of these cells,
$C_{126}$ and $\row(A)$ do not intersect. Indeed
$$A^T(A_{126}^T)^{-1}(4,4,0)^T = (4,4,0,0,0,0)^T = w^{(126)} - 4e_3 - 4e_4 - 4e_5.$$
\end{example}
We recall that the likelihood function has $\deg(cX_A)=\vol_A(Q_A)$ critical points on $cX_A$ and thus $\vol_A(Q_A)$ tropical critical points, counted with multiplicity. The following theorem gives a sufficient condition for $A$ and $w$ that guarantees that all intersection points can be found on the cones $C_\tau$.

\ConditionForAllIntersections*
\begin{proof}
Let $v\in\R^n$ induce the regular triangulation $\Delta = \Delta_v$ of $Q_A$ and let $\tau\in\Delta$ be a maximal cell. Since $\tau$  lies in a cell of $\Delta_{-w^{(\tau)}}$, it lies in a cell of $\Delta_{-w^{(\tau)}+\varepsilon v}$ for sufficiently small $\varepsilon$. As a result, we have
$$
A^T(A_\tau^T)^{-1}(w^{(\tau)}-\varepsilon v)_{\tau} \geq (w^{(\tau)}-\varepsilon v).
$$
By the same argument as in the proof of Proposition \ref{proposition:subdivision condition for cone intersection}, this implies that $\row(A)$ intersects $C_{\tau}-\varepsilon v$ in the point $\hat{q}_{\varepsilon}(\tau) = A^T(A_\tau^T)^{-1}(w^{(\tau)}-\varepsilon v)$ with multiplicity $\vol_A(\tau)$. This holds for every maximal cell in $\Delta$ for sufficiently small $\varepsilon$. Now, consider the stable intersection with perturbation vector $v$\rev{---since there is a full-dimensional cone of vectors which induce the same triangulation $\Delta$, see \cite[Proposition 5.2.9]{deloera_2010_triangulations}, the vector $v$ is sufficiently general as a perturbation in the stable intersection.}
$$
\row(A)\cap_{st}L_{A,u}=\lim_{\varepsilon\rightarrow 0}(\varepsilon v+\row(A))\cap L_{A,u}.
$$
The intersection on the right contains the points $\hat{q}_{\varepsilon}(\tau)$ with limit equal to $\hat{q}(\tau)$ as $\varepsilon\rightarrow 0$, where $\tau$ runs over maximal cells in $\Delta_v$. This proves that these points are among the tropical critical points. Since the total multiplicity of these points is $\sum_{\tau\in\Delta_v}\vol_A(\tau)=\vol_A(Q_A)$ these are all the critical points.
\end{proof}
\begin{example}[Hirzebruch surface revisited]
We return to Example \ref{ex: Hirzebruch}
one more time. If we take $\Delta$ to be the regular triangulation with maximal simplices $\{123, 234, 245, 256\}$  as in the figure below, we see that 
$w^{(123)} = (0,0,0,0,0,0)$, 
$w^{(234)} = w^{(245)} = (6,6,0,0,0,0)$, and
$w^{(256)} = (6,6,4,4,4,0)$. 
\begin{figure}[h!]
\centering\includegraphics[width=0.13\textwidth]{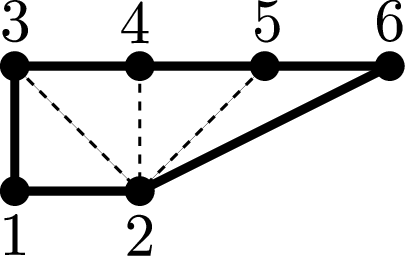}
\end{figure}

The subdivisions
$\Delta_{-w^{(123)}} = \Delta_{-w^{(234)}} = \Delta_{-w^{(245)}}$ have a single
maximal cell $123456$ and $\Delta_{-w^{(256)}}$ has maximal cells $\{12345, 256\}$. In each case, the maximal simplices of $\Delta$ lie in a cell of the corresponding subdivision. Therefore by Theorem \ref{theorem: tropical critical points from triangulation} we recover all tropical critical points:
\begin{align*}
\hat{q}(123) &= A^T(A_{123}^T)^{-1}(0,0,0)^T = (0,0,0,0,0,0)^T = w^{(123)} + 0 e_4 + 0 e_5 + 0 e_6
\\    
\hat{q}(234) &= A^T(A_{234}^T)^{-1}(6,0,0)^T = (6,6,0,0,0,0)^T = w^{(234)} + 0 e_1 + 0 e_5 + 0 e_6
\\
\hat{q}(245) &= A^T(A_{245}^T)^{-1}(6,0,0)^T = (6,6,0,0,0,0)^T = w^{(245)} + 0 e_1 + 0 e_3 + 0 e_6
\\
\hat{q}(256) &= A^T(A_{256}^T)^{-1}(6,4,0)^T = (10,6,12,8,4,0) = w^{(256)} + 4e_1 + 8 e_3 + 4 e_4.
\end{align*}
Since all maximal simplices in $\Delta$ have normalized volume one, this gives tropical critical points with multiplicities $1,2$, and $1$, respectively.
\end{example}
\begin{example}\rev{
Theorem \ref{theorem: tropical critical points from triangulation} does not apply for the matrix $A$ and tropical data vector $w$ in Example \ref{example: square with central point}, for any choice of regular triangulation $\Delta$ of $Q_A$. The unique tropical intersection point is $\hat{q}=0$ with multiplicity $\vol_A (Q_A)=8$, and this point does not lie on any of the cones $C_{\tau}$.}
\end{example}
\rev{In Section \ref{section: general position} we will look at instances under which Theorem \ref{theorem: tropical critical points from triangulation} applies and thus completely solves the tropical maximum likelihood problem. In the remainder of this section we consider two practical} applications of Theorem \ref{theorem: tropical critical points from triangulation}. For convenience, we introduce one further definition: for $\mathcal{O}\subseteq [n]$ and $\tau\in M(A)$, we say that $\tau$ \textit{has an $\mathcal{O}$-basis exchange} if for all $j\in\tau\backslash\mathcal{O}$ there exists $i\in\mathcal{O}\backslash \tau$ such that $\tau-j+i\in M(A)$. Note that if $\tau$ has an $\mathcal{O}(w)$-basis exchange, then $w^{(\tau)}=0$, where \rev{we recall that $\mathcal{O}(w) = \{i \in [n] \, : \, w_i = 0\}$}.

\OContainsBasis*
\begin{proof}
Let $\Delta_\omega$ be an arbitrary regular triangulation of $Q_A$ and $\tau\in\Delta_\omega$ a maximal cell. Since $\mathcal{O}(w)$ contains a basis of $M(A)$, $\tau$ has a $\mathcal{O}(w)$-basis exchange and thus $w^{(\tau)}=0$. The subdivision $\Delta_0$ induced by the constant weight $w^{(\tau)}=0$ is trivial, and thus $\tau$ lies in a cell of $\Delta_0$. This holds for every maximal cell of $\Delta_\omega$ and thus Theorem \ref{theorem: tropical critical points from triangulation} applies. But, since $w^{(\tau)}=0$ the tropical critical points are $\hat{q}(\tau)=0$ for all cells.
\end{proof}
We now move to the second application. \rev{Recall that a} subset $\sigma\subseteq[n]$ is called a \textit{face} of $Q_A$ if there exists a linear form $\psi$ on $\R^k$ such that $\psi(a_i)$ is maximized precisely for all $i\in\sigma$. 

\SolutionForUniformMatroids*
\begin{proof}
We start with the case when $\mathcal{O}(w)$ is not a face of $Q_A$. If $\mathcal{O}(w)$ contains a basis of $M(A)$, then we are done by
Theorem \ref{theorem: O contains basis zero solutions}. If $\mathcal{O}(w)$ does not contain
a basis of $M(A)$, then it must be contained in some basis since $M(A)$ is a uniform matroid. 
Consider the subdivision $\Delta_{e_{\mathcal{O}(w)}}$ and a regular triangulation $\Delta_{\omega}$ that refines it. We claim that $\mathcal{O}(w)$ does not lie in a maximal cell of the subdivision $\Delta_{e_{\mathcal{O}(w)}}$. Otherwise, by Theorem \ref{theorem: definition of regular subdivision} there exists a basis $\tau\supseteq \mathcal{O}(w)$ such that
$$
A^T(A_\tau^T)^{-1}(e_{\mathcal{O}(w)})_{\tau} \leq e_{\mathcal{O}(w)}.
$$
But then the linear function $\psi = \langle (A_\tau^T)^{-1}(e_{\mathcal{O}(w)})_\tau,\,\cdot\rangle$ on $\R^k$ is maximized on $(a_i)_{i\in\mathcal{O}(w)}$, in contradiction with $\mathcal{O}(w)$ not being a face. Since $\mathcal{O}(w)$ does not lie in a cell of $\Delta_{e_{\mathcal{O}(w)}}$, it does not lie in a cell of the refinement $\Delta_{\omega}$ either. As a result, no maximal cell $\tau\in\Delta_\omega$ fully contains $\mathcal{O}(w)$ and thus they all have a $\mathcal{O}(w)$-basis exchange. This results in $w^{(\tau)}=0$ for all these maximal cells, and thus $\hat{q}(\tau)=0$ as in the proof of Theorem \ref{theorem: O contains basis zero solutions}.

Now assume $\mathcal{O}(w)$ contains a face of $Q_A$. Every maximal cell $\tau\in\Delta_\omega$ that does not contain $\mathcal{O}(w)$ has a $\mathcal{O}(w)$-basis exchange and thus contributes a tropical critical point $\hat{q}(\tau)=0$ with multiplicity $\vol_A(\tau)$ as above. We now show that for all maximal cells $\tau\in\Delta_\omega$ that contain $\mathcal{O}(w)$, $\tau$ lies in a cell of $\Delta_{-w^{(\tau)}}$ when the condition on the tropical data vector $w$ is satisfied. Applying Theorem \ref{theorem: tropical critical points from triangulation} then completes the proof. We write the condition explicitly for an entry $i\in[n]\setminus\tau$:
\begin{equation}\label{eq: condition for tau}
\sum_{j\in\tau} (A^T(A_\tau^T)^{-1})_{ij} w^{(\tau)}_j \geq w^{(\tau)}_i.
\end{equation}
By definition $w_j^{(\tau)}\geq\min\lbrace w_z:w_z>0\rbrace$ for all $j\in\tau\setminus\mathcal{O}(w)$. Furthermore, since $M(A)$ is uniform we have $w_j^{(\tau)} = \min\lbrace w_j , w_z : z\in[n]\setminus\tau\rbrace\leq W$ for all $j\in\tau$, where $W$ is any value such that the tropical data vector $w$ has $k$ entries $w_z\leq W$. Using these inequalities on $w^{(\tau)}_{\tau}$, we can bound the left hand side in \eqref{eq: condition for tau} from below as
$$
\sum_{j\in\tau}(A^T(A_\tau^T)^{-1})_{ij}w_j^{(\tau)} \geq \min\{w_z>0\}\cdot\Big(\sum_{j\in\mathcal{J}_+} (A^T(A_\tau^T)^{-1})_{ij}\Big) + W\cdot\Big(\sum_{j\in\mathcal{J}_-} (A^T(A_\tau^T)^{-1})_{ij}\Big),
$$
where $\mathcal{J}_{+}=\{j\in \tau \setminus\mathcal{O}(w) : (A^T(A_\tau^T)^{-1})_{ij}>0\}$ and $\mathcal{J}_-=(\tau\setminus\mathcal{O}(w))\setminus\mathcal{J}_{+}$. We remark that if $\mathcal{J}_-$ is nonempty, then there exists a tropical data vector $w$ for which this inequality is tight. Next, since $w_i^{(\tau)}=\max\{w_j^{(\tau)}:j\in\tau\rbrace$ for all $i\in[n]\setminus\tau$, we have $w_i^{(\tau)}\leq W$ as an upper bound for the \rev{right hand side} in equation \eqref{eq: condition for tau}. Combining the bounds for both sides of the inequality, we obtain the implication
\begin{align*}
\text{\eqref{eq: condition for tau}}
&\,\,\Longleftarrow\,\, \min\{w_z>0\}\cdot\Big(\sum_{j\in\mathcal{J}_+} (A^T(A_\tau^T)^{-1})_{ij}\Big) + W\cdot \Big(\sum_{j\in\mathcal{J}_-} (A^T(A_\tau^T)^{-1})_{ij}\Big) \geq W
\\
&\iff\,\, \min\{w_z>0\} \cdot \underbrace{\frac{\sum_{j\in\mathcal{J}_+}(A^T(A^T_{\tau})^{-1})_{ij}}{1-\sum_{j\in\mathcal{J}_-}(A^T(A^T_{\tau})^{-1})_{ij}}}_{c(A,\mathcal{O},\Delta,\tau,i)}\geq W.
\end{align*}
Here $c(A,\mathcal{O},\Delta,\tau,i)$ is some number that depends on $(A,\mathcal{O},\Delta,\tau,i)$. Minimizing over all regular triangulations $\Delta$ that refine $\Delta_{e_{\mathcal{O}(w)}}$, all maximal cells $\tau\supset\mathcal{O}(w)$ in $\Delta$ and all $i\in[n]\setminus\tau$, one obtains a constant $c_{A,\mathcal{O}}$ that only depends on matrix $A$ and the face $\mathcal{O}(w)$. It remains to prove that this constant satisfies $c_{A,\mathcal{O}}\geq 1$; we show that in fact $c(A,\mathcal{O},\Delta,\tau,i)\geq 1$. Since $\mathcal{O}(w)$ is a face of $Q_A$, we know that $\mathcal{O}(w)$ lies in a cell of $\Delta_{e_{\mathcal{O}(w)}}=\Delta_{-e_{[n]\setminus\mathcal{O}(w)}}$. Thus for every maximal simplex $\tau\supset\mathcal{O}(w)$ in a regular triangulation $\Delta$ that refines $\Delta_{e_{\mathcal{O}(w)}}$, we have
\begin{align*}
&e_{[n]\setminus\mathcal{O}(w)} \leq A^T(A^T_{\tau})^{-1}(e_{[n]\setminus\mathcal{O}(w)})_{\tau} 
\\
\iff &1\leq \sum_{j\in\mathcal{J}_+} (A^T(A_\tau^T)^{-1})_{ij} + \sum_{j\in\mathcal{J}_-} (A^T(A^T_{\tau})^{-1})_{ij}  &\text{~for all $i\in[n]\setminus\tau$}
\\
\iff & 1 \leq \frac{\sum_{j\in\mathcal{J}_+} (A^T(A_\tau^T)^{-1})_{ij}}{1-\sum_{j\in\mathcal{J}_-} (A^T(A_\tau^T)^{-1})_{ij}}=c(A,\mathcal{O},\Delta,\tau,i) &\text{~for all $i\in[n]\setminus\tau$}.
\end{align*}
This implies in particular that $c_{A,\mathcal{O}}\geq 1$ and completes the proof.
\end{proof}
We remark that if $\mathcal{O}(w)$ is a codimension-one face, then the $c_{A,\mathcal{O}}$-condition on $w$ is always satisfied.
%%%

%%%
\section{Further Examples}
\label{section: further examples}
In this section we give some further examples to illustrate the results of the previous section. We start with the case where $X_A$ is a curve. 
\begin{corollary}[Monomial curves] \label{corollary: curves}
Let 
$$
A = \begin{pmatrix} 1 & 1 & \cdots & 1 \\
a_1 & a_2 & \cdots & a_n    
\end{pmatrix}
$$
where $a_1 < a_2 < \cdots < a_n$ are integers,
and let $w \geq 0$ be a tropical data vector where $w_{min} = \min_{i \in [n]}\{w_i > 0\}$.
Then the tropical critical points are determined as follows: 
\begin{itemize}
 \item[i)] if $w_1 = 0 < w_i$ for 
 all $2 \leq i \leq n$, then the tropical critical points are
 $\hat{q}(12) = w_{min} \cdot (0,1,\frac{a_3-a_1}{a_2-a_1}, \ldots, \frac{a_n-a_1}{a_2-a_1})$ with multiplicity $a_2-a_1$,
 and $\hat{q}=(0,0,\ldots, 0)$ with multiplicity $a_n-a_2$;
 \item[ii)] if $w_n = 0 < w_i$ for 
 all $1 \leq i \leq n-1$, then the tropical critical points are
 $\hat{q}((n-1)n) = w_{min} \cdot (\frac{a_n-a_1}{a_n-a_{n-1}}, \ldots, \frac{a_n-a_{n-2}}{a_n-a_{n-1}}, 1, 0)$ with multiplicity $a_n-a_{n-1}$,
 and $\hat{q}=(0,0,\ldots, 0)$ with multiplicity $a_{n-1}-a_1$;
 \item[iii)] otherwise, $\hat{q}=(0,0,\ldots,0)$ is the unique tropical critical point with multiplicity $a_n-a_1$.
\end{itemize}
\end{corollary}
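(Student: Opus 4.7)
The plan is to apply Theorems \ref{theorem: O contains basis zero solutions} and \ref{theorem: solution for uniform matroids}, using that $M(A)$ is the uniform matroid $U_{2,n}$ (any two columns of $A$ are linearly independent because $a_1<a_2<\cdots<a_n$) and that $Q_A=\conv(a_1,a_n)$ is a line segment, whose faces regarded as subsets of $[n]$ are exactly $\{1\}$, $\{n\}$, and $[n]$. This lets us dispatch on which of these $\mathcal{O}(w)$ is.

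\smallskip
\noindent\textbf{Case (iii).} Here $\mathcal{O}(w)$ is either $\{1,n\}$, contains some index $i$ with $1<i<n$, or equals $[n]$. In the first two subcases, $\mathcal{O}(w)$ is not a face of $Q_A$, so part 1 of Theorem \ref{theorem: solution for uniform matroids} immediately gives the unique tropical critical point $\hat{q}=0$ with multiplicity $\vol_A(Q_A)=a_n-a_1$. If $\mathcal{O}(w)=[n]$, then $\mathcal{O}(w)$ contains a basis of $M(A)$ and Theorem \ref{theorem: O contains basis zero solutions} yields the same conclusion.

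\smallskip
\noindent\textbf{Cases (i) and (ii).} Here $\mathcal{O}(w)$ equals the codimension-one face $\{1\}$ or $\{n\}$, so by the remark following Theorem \ref{theorem: solution for uniform matroids} the condition on $w$ in part 2 is automatic, and the tropical critical points are $\hat{q}(\tau)$ for $\tau$ ranging over maximal cells of any regular triangulation refining $\Delta_{e_{\mathcal{O}(w)}}$. For case (i) the subdivision $\Delta_{e_1}$ of the segment has maximal cells $\{1,2\}$ and $\{2,3,\dots,n\}$, and any refining regular triangulation has maximal cells $\tau_i=\{i,i+1\}$ for $i=1,\dots,n-1$. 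For $i\geq 2$ the index $1\in[n]\setminus\tau_i$ with $w_1=0$ forces $w^{(\tau_i)}=0$ via the formula in Theorem \ref{theorem: cones in LAu}, so $\hat{q}(\tau_i)=0$; summing the corresponding volumes $a_{i+1}-a_i$ over $i=2,\dots,n-1$ gives the multiplicity $a_n-a_2$. For $\tau_1=\{1,2\}$, using that $M(A)$ is uniform (so every pair is a basis) the formula in Theorem \ref{theorem: cones in LAu} yields $w^{(\tau_1)}_1=0$ and $w^{(\tau_1)}_i=w_{min}$ for $i\geq 2$; the remaining step is the explicit linear-algebra computation
$$
\hat{q}(\tau_1)=A^T(A_{\tau_1}^T)^{-1}(0,w_{min})^T,
$$
using $A_{\tau_1}=\bigl(\begin{smallmatrix}1&1\\a_1&a_2\end{smallmatrix}\bigr)$, which produces exactly the stated vector, with multiplicity $|\det A_{\tau_1}|=a_2-a_1$. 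Case (ii) is handled by the symmetric argument after reindexing $i\mapsto n+1-i$, using $\Delta_{e_n}$ and $\tau=\{n-1,n\}$.

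\smallskip
The only nontrivial computation is the explicit evaluation of $w^{(\tau)}$ and of $A^T(A_\tau^T)^{-1}w^{(\tau)}_\tau$ in cases (i) and (ii); both are short because $M(A)$ is uniform of rank $2$, which collapses the max/min in Theorem \ref{theorem: cones in LAu} to a minimum over all entries and a single maximum, and because $(A_\tau^T)^{-1}$ for a two-column Vandermonde-type matrix has a closed form. I do not anticipate a conceptual obstacle.
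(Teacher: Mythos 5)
Your proposal is correct and follows essentially the same route as the paper's proof: invoke Theorem \ref{theorem: solution for uniform matroids} (with Theorem \ref{theorem: O contains basis zero solutions} for the degenerate subcases of (iii)), refine $\Delta_{e_{\mathcal{O}(w)}}$ by the chain triangulation $\{12,23,\dots,(n-1)n\}$, and evaluate $w^{(\tau)}$ and $\hat{q}(\tau)=A^T(A_\tau^T)^{-1}w^{(\tau)}_\tau$ explicitly. The only cosmetic differences are that the paper checks the $c_{A,\mathcal{O}}$-condition directly by observing that $k=2$ entries always satisfy $w_j\leq w_{min}$ (taking $c_{A,\mathcal{O}}=1$) rather than citing the codimension-one remark, and your assertion that \emph{any} refining regular triangulation is the full chain is a harmless overstatement, since the theorem only requires exhibiting one such triangulation.
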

\begin{proof}
We start by noting that $\deg(X_A) = \vol_A(Q_A) = a_n-a_1$. Since $M(A)$ is 
uniform of rank two, Theorem \ref{theorem: solution for uniform matroids} applies. \rev{For statement i) we know that $\mathcal{O}(w)=\{1\}$ is a face of $Q_A$ and thus case 2 of Theorem \ref{theorem: solution for uniform matroids} applies.} There are two entries $w_i$ such that $w_i \leq w_{min}$ (where we take $c_{A,\mathcal{O}}=1$) and the regular
triangulation $\Delta$ with maximal cells $\{12, 23, \ldots, (n-1)n\}$ refines the subdivision 
$\Delta_{e_{\mathcal{O}(w)}} = \Delta_{e_1}$ with maximal cells $\{12, 234\cdots n\}$.
The simplices in $\Delta$ give us $\hat{q}(12) = w_{min} \cdot (0,1,\frac{a_3-a_1}{a_2-a_1}, \ldots, \frac{a_n-a_1}{a_2-a_1})$ with multiplicity $a_2-a_1$
 and $(0,0,\ldots, 0)$ with multiplicity $a_n-a_2$. \rev{Statement ii) is proven similarly. Statement iii) follows from the first case of} Theorem \ref{theorem: solution for uniform matroids} when $w_i = 0$ for some $1 < i < n$ or from Theorem \ref{theorem: O contains basis zero solutions} when $w_i=0$ for more than one element $i\in[n]$.
\end{proof}
Next we have a look at certain toric surfaces. 
\begin{corollary}[Convex polygons] 
\label{cor: polygons} 
Let 
$$
A = \begin{pmatrix} 1 & 1 & \cdots & 1 \\
a_1 & a_2 & \cdots & a_n    
\end{pmatrix} \,\,\, \rev{\text{~with~}a_i \in \Z^2}
$$
form the cyclically ordered vertices of a convex polygon, and let $w \geq 0$
be a tropical data vector where $w_{min} = \min_{i \in [n]}\{w_i > 0\}$.
Then, 
\begin{itemize}
 \item[i)] if, without loss of generality, $w_1 = 0 < w_i$ for all $2 \leq i \leq n$, then there exists a constant $c\geq 1$ such that for all 
 tropical data vectors $w$ that have $3$ entries with $w_i \leq c \cdot w_{min}$, the tropical critical points are $\hat{q}(12n)$ with multiplicity $\vol_A(12n)$ and $\hat{q}=(0,0,\ldots,0)$ with multiplicity $\vol_A(23\cdots n)$;
 \item[ii)] if, without loss of generality, $w_1=w_2=0 < w_i$ for all $3 \leq i \leq n$, then the tropical critical points are $\hat{q}(12n)$
 with multiplicity $\vol_A(12n)$, and $\hat{q}=(0,0,\ldots, 0)$
with multiplicity $\vol_A(23\cdots n)$;
\item[iii)] if $\{i \, : \, w_i = 0\}$ 
is not a vertex or edge of $Q_A$, then
$(0,0,\ldots, 0)$ is the unique tropical critical point with multiplicity $\vol_A(Q_A)$.
\end{itemize}
\end{corollary}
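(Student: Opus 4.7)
The plan is to invoke Theorem \ref{theorem: solution for uniform matroids} with $k=3$, after first noting that $M(A)$ is the uniform matroid of rank $3$: three columns $(1,a_i)^T$ are linearly independent iff the three points $a_i$ are affinely independent, which holds because no three vertices of a convex polygon are collinear.

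For case (iii), I would observe that the proper faces of the two-dimensional polytope $Q_A$ are exactly its vertices and edges. Since $w \neq 0$ forces $\mathcal{O}(w)\neq [n]$, the hypothesis ``not a vertex or edge'' becomes ``not a face of $Q_A$'', and Theorem \ref{theorem: solution for uniform matroids}(1) immediately gives $\hat{q}=0$ with multiplicity $\vol_A(Q_A)$.

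For case (i), where $\mathcal{O}(w)=\{1\}$ is a vertex of $Q_A$, I would first identify $\Delta_{e_{\{1\}}}$: in the lifted polytope only $(a_1,1)$ sits above height $0$, so the lower hull splits $Q_A$ into the corner triangle $\{1,2,n\}$ formed by $a_1$ and its two cyclic neighbors together with the remaining subpolygon $\{2,3,\dots,n\}$. Any regular triangulation $\Delta$ refining this subdivision contains $\{1,2,n\}$ as a maximal cell, and every other maximal cell $\tau\in\Delta$ satisfies $\mathcal{O}(w)\cap\tau=\emptyset$, so by uniformity of $M(A)$ it admits an $\mathcal{O}(w)$-basis exchange, giving $w^{(\tau)}=0$ and $\hat{q}(\tau)=0$. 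Theorem \ref{theorem: solution for uniform matroids}(2) with $c=c_{A,\mathcal{O}}$ then delivers the announced tropical critical points, the multiplicities adding up correctly via the decomposition $Q_A=\conv(a_1,a_2,a_n)\cup\conv(a_2,\dots,a_n)$.

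For case (ii), $\mathcal{O}(w)=\{1,2\}$ is an edge and hence a codimension-one face of $Q_A$, so the remark following Theorem \ref{theorem: solution for uniform matroids} guarantees that the $c_{A,\mathcal{O}}$-condition is automatically satisfied. The cells of $\Delta_{e_{\{1,2\}}}$ consist of the subpolygon $\{3,\dots,n\}$ together with a triangulation of the quadrilateral $\conv(a_1,a_2,a_3,a_n)$, whose diagonal is determined by the lower hull of $(a_1,1),(a_2,1),(a_3,0),(a_n,0)$. The main subtlety is that this diagonal can be either $\overline{a_1 a_3}$ or $\overline{a_2 a_n}$ depending on the polygon's geometry; after possibly reversing the cyclic orientation, which only interchanges the roles of the labels $3$ and $n$, we may assume the diagonal is $\overline{a_2 a_n}$, folding this choice into the ``without loss of generality'' clause. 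The unique triangle through $\{1,2\}$ in the resulting refining triangulation is then $\{1,2,n\}$, all other triangles have $w^{(\tau)}=0$ as in case (i), and Theorem \ref{theorem: solution for uniform matroids}(2) yields the stated result. The principal obstacle throughout the proof is exactly this geometric identification of $\Delta_{e_{\mathcal{O}(w)}}$, and in particular the handling of the diagonal of the lifted quadrilateral through cyclic relabeling.
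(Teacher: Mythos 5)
Your proposal is correct and follows the same overall strategy as the paper: verify that $M(A)$ is the uniform matroid of rank $3$, identify $\Delta_{e_{\mathcal{O}(w)}}$ explicitly, and apply Theorem \ref{theorem: solution for uniform matroids} (using the remark after it for the codimension-one case). The one place where you genuinely diverge is case (ii), and your extra care there is warranted: the paper's proof asserts that $\Delta_{e_{12}}$ has the quadrilateral $\{1,2,3,n\}$ as a single maximal cell, so that a refining triangulation containing $12n$ and $23n$ may be chosen freely. But the four lifted points $(a_1,1),(a_2,1),(a_3,0),(a_n,0)$ are coplanar only when the chord $\overline{a_3a_n}$ is parallel to the edge $\overline{a_1a_2}$ (as happens in the paper's pentagon example); generically the lower hull splits the quadrilateral along one of its two diagonals, and if that diagonal is $\overline{a_1a_3}$ then every refinement of $\Delta_{e_{12}}$ contains $123$ rather than $12n$, yielding $\hat{q}(123)$ with multiplicity $\vol_A(123)$ instead of the stated answer. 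Your observation that the residual freedom in the ``without loss of generality'' labeling --- reversing the cyclic order of the polygon, which fixes the edge $\{1,2\}$ setwise and swaps the roles of $3$ and $n$ --- always allows one to force the favorable diagonal $\overline{a_2a_n}$ is exactly the repair needed, and it is the right way to read the statement. So your proof is not merely equivalent to the paper's; in case (ii) it closes a gap that the paper's own argument leaves open.
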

\begin{proof}
Because $M(A)$ is a uniform matroid of rank three Theorem \ref{theorem: solution for uniform matroids} applies. In the first case, $\Delta_{e_{\mathcal{O}(w)}} = \Delta_{e_1} = \{12n, 23 \cdots n\}$ and the result follows. In the second case, $\Delta_{e_{\mathcal{O}(w)}} = \Delta_{e_{12}} = \{123n, 34 \cdots n\}$, and we can take a regular triangulation refining $\Delta_{e_{12}}$
which contains the simplices $12n$ and $23n$, and the result follows. Again, the third case is a direct application 
of Theorem \ref{theorem: solution for uniform matroids}.
\end{proof}
\begin{example}
We illustrate Corollary \ref{cor: polygons} with an example. 
Consider the pentagon with the tropical data vector $w$
$$
A = \begin{pmatrix}
1&1&1&1&1\\
0&1&2&1&0\\
0&0&1&2&1
\end{pmatrix}
\quad\text{~and~}\quad
w=(0, w_2, w_3, w_4, w_5)
\quad\quad\raisebox{-0.38\height}{\includegraphics[width=0.1\textwidth]{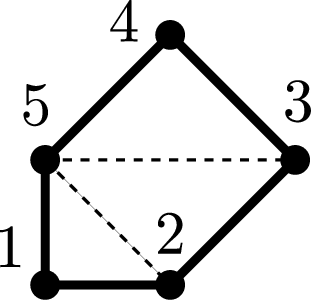}}
$$
The regular subdivision $\Delta_{e_{\mathcal{O}(w)}} = \Delta_{e_1}$ consists of the two maximal cells $125$ and $2345$. The 
triangulation in the above figure refines this subdivision. We compute 
$$
A^T(A_{125}^{-1})^T \, = \, 
\begin{pmatrix}
    1 & 0 & 0\\
    0 & 1 & 0\\
    -2 & 2 & 1 \\
    -2 & 1 & 2 \\
    0 & 0 & 1
\end{pmatrix},
$$
and observe that $\mathcal{J}_+ = \{2,5\}$
and $\mathcal{J}_- = \emptyset$ for both $i=3$ and $i=4$. The third and fourth rows of the above matrix gives $c_{A, \mathcal{O}} = \min(1+2, 2+1) = 3$. An example of a tropical data vector 
which satisfies the condition in Theorem \ref{theorem: solution for uniform matroids} is $w=(0,4,10,6,5)$.
Note that $w_1, w_2, w_5 \leq 3 \cdot 4$. Now we can compute that $w^{(125)} = (0,4,5,5,5)$ and $\hat{q}(125) = (0,4,13,14,5)$ with multiplicity one. The other tropical critical point is $(0,0,0,0,0)$ with multiplicity four. 

Next we specialize the tropical data vector to $w=(0,0,w_3,w_4,w_5)$. Now
$\mathcal{O}(w) = \{1,2\}$ corresponds 
to a face of $Q_A$ of codimension one. The subdivision 
$\Delta_{e_{12}}$ consists of the two maximal faces $1235$ and $345$. The triangulation in the above figure refines this subdivision as well. 
We see that $w_{125}^{(125)} = (0,0,w_{min})$ where $w_{min} = \min(w_3,w_4,w_5)$ and $w^{(125)} = w_{\min}\cdot (0,0,1,1,1)$. The corresponding
tropical critical points is 
$\hat{q}(125)=w_{\min}\cdot (0,0,1,2,1)$ and has multiplicity one. The remaining tropical critical point is $\hat{q}=(0,0,0,0,0)$ with multiplicity four. 
\end{example}
We finish with a three-dimensional example. This corresponds to the independence model of a binary and a ternary random variable.
\begin{example}
  Consider  
  $$
A = \begin{pmatrix}
1&1&1&1&1&1\\
0&1&0&0&1&0\\
0&0&1&0&0&1 \\
0&0&0&1&1&1
\end{pmatrix}
\quad\text{~and~}\quad
w=(w_{11}, w_{12}, w_{13}, w_{21}, w_{22}, w_{23})
\quad\quad \raisebox{-0.35\height}{\includegraphics[width=0.18\textwidth]{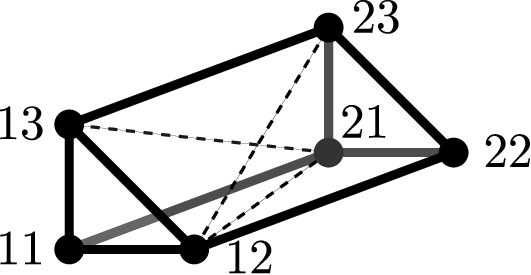}}
$$
The polytope $Q_A$ is the triangular prism shown in the figure, and has $\vol(Q_A) = 3$. We will take $w =(0,1,1,1,2,4)$ and consider the regular triangulation $\Delta$ consisting 
of maximal simplices $\tau_1=\{13, 21, 22, 23\}$, $\tau_2=\{12, 13,21, 22\}$, and $\tau_3=\{11,12,13,21\}$. We compute the corresponding vertices 
\begin{align*}
w^{(\tau_1)} = (0,1,0,0,1,0), \qquad w^{(\tau_2)} = (0,0,1,0,0,1), \qquad
w^{(\tau_3)} = (0,1,1,1,1,1). 
\end{align*}
For the first two simplices $\Delta_{-w^{(\tau)}}$ is the trivial subdivision
and for the last one the corresponding
subdivision has maximal cells $\{11,12,13,21\}$
and $\{12,13,21,22,23\}$. We see that 
each maximal simplex $\tau$ of $\Delta$ 
is lies in a cell of $\Delta_{-w^{(\tau)}}$. Using Theorem \ref{theorem: tropical critical points from triangulation} we can list all the tropical critical points:
\begin{align*}
\hat{q}(\tau_1) = (0,1,0,0,1,0), \qquad \hat{q}(\tau_2) = (0,0,1,0,0,1), \qquad \hat{q}(\tau_3) = (0,1,1,1,2,2).
\end{align*}
Since $A$ is a totally unimodular matrix, 
each of the above critical points has multiplicity one. 
\end{example}
%%%%%%

%%%%%%
\section{Tropical iterative proportional scaling}
\label{section: tIPS}
In this section, we report some observations on the tropicalization of an algorithm that computes the classical MLE for toric models. Recall that Birch's theorem (Theorem \ref{theorem: Birch}) guarantees that there is a unique positive real point among the critical points of $\ell_u$ on a log-linear model. This is the maximum likelihood estimate (MLE) and can be computed using the \textit{iterative proportional scaling algorithm} (IPS). Following the treatment in \cite{Lectures_on_Alg_Stats}, we can describe the IPS as follows: for a toric model defined by $A\in\mathbb{N}^{k\times n}$ with constant column sums $\alpha$ and with normalized data $u\in\mathbb{R}^n_{\geq 0}$, the algorithm computes a sequence of estimates $\rev{\hat{p}^0,\hat{p}^1},\dots$ based on the recursion:
$$
\hat{p}_j^{t+1} = \hat{p}_j^t \times \prod_{i=1}^k\left(\frac{r_i}{\hat{r}^t_i}\right)^{a_{ij}/\alpha}\quad\text{~where~}\quad\text{~$\hat{r}^t = A\hat{p}^t$ and $r=Au$},
$$
for all $j\in[n]$. Darroch and Ratcliff \cite{IPS72} proved that this algorithm converges to the MLE for any choice of initial distribution $\rev{\hat{p}^0}\in X_A \cap \Delta_{n-1}$. Note that a matrix $A'\in\mathbb{Z}^{k\times n}$ with $(1,\dots,1)$ in its row span can always be transformed to a nonnegative matrix $A$ with constant column sums such that $\row(A)=\row(A')$. In general, the convergence speed of the algorithm depends on this choice of $A$.

Here we investigate the \emph{tropical iterative proportional scaling algorithm} (tIPS). For this, we tropicalize IPS by replacing standard arithmetic by tropical arithmetic: multiplication becomes summation and addition becomes minimization. For a model defined by $A\in\mathbb{N}^{k\times n}$ with constant column sum $\alpha$ and with tropical data vector $w$, the tIPS computes a sequence of vectors $\hat{q}^{0},\hat{q}^{1},\ldots$ in $\R^n$ based on the recursion: 
$$
\hat{q}^{t+1} = \hat{q}^t + \alpha^{-1}A^T(r-\hat{r}^t) \quad\text{~where~}\quad\begin{cases}
\text{$\hat{r}^t_i = \min\lbrace \hat{q}^t_j : j\in[n]\text{~s.t.~}a_{ij}\neq 0\rbrace$}\\
\text{$r_i = \min\lbrace w_j : j\in[n]\text{~s.t.~}a_{ij}\neq 0\rbrace$}.
\end{cases}
$$
The algorithm terminates at step $t$ if $\hat{r}^t=r$. We note the following relation of tIPS to the tropical maximum likelihood estimation problem.
\begin{proposition}
The tropical iterative proportional scaling algorithm satisfies the following properties:
\begin{itemize}
    \item[i)] If $\hat{q}^0\in\row(A)$ then $\hat{q}^t\in\row(A)$ for all $t\geq 0$.
    \item[ii)] The algorithm terminates if it reaches a tropical critical point.
\end{itemize}
\end{proposition}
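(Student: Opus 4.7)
\medskip\noindent\emph{Proof plan.} For part (i), the plan is a direct induction on $t$. The base case holds by hypothesis. For the inductive step, the update rule gives $\hat{q}^{t+1} - \hat{q}^t = \alpha^{-1} A^T(r - \hat{r}^t)$, which lies in the image of $A^T\colon \R^k \to \R^n$. This image is exactly $\row(A)$, so if $\hat{q}^t \in \row(A)$ then $\hat{q}^{t+1} \in \row(A)$ as a sum of two elements of this linear subspace.

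For part (ii), I first reformulate the termination condition: since $A$ has full rank $k$, the map $A^T$ is injective, so the update gives $\hat{q}^{t+1}=\hat{q}^t$ precisely when $\hat{r}^t = r$. Thus it suffices to show that $\hat{r}^t = r$ whenever $\hat{q}^t$ is a tropical critical point. By definition, $\hat{q}^t = \val(\hat{p})$ for some $\hat{p} \in cX_A \cap Y_{A,u}$, and Birch's theorem (Theorem \ref{theorem: Birch}) gives $A\hat{p} = Au$. Taking coordinatewise valuations yields $\val((A\hat{p})_i) = \val((Au)_i) = r_i$, where the second equality uses the genericity of $u$ together with $\val(a_{ij}) = 0$. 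Combining this with the standard tropical bound
\[
\val\Big(\sum_j a_{ij}\hat{p}_j\Big) \;\geq\; \min_j\{\val(\hat{p}_j) \,:\, a_{ij} \neq 0\} \;=\; \hat{r}^t_i,
\]
gives the entrywise inequality $\hat{r}^t \leq r$.

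The main obstacle in completing (ii) is upgrading this to equality. The inequality can be strict whenever the leading terms of $(a_{ij}\hat{p}_j)_{j}$ cancel for some row $i$: in Example \ref{example: binary random variables - classical solution}, the critical point $\hat{p}_1=(1+\alpha,\,-\alpha,\,-\alpha,\,\alpha)+\cdots$ exhibits cancellation in row $2$ of $A$, yielding $\hat{r}_2 = 0 < 2 = r_2$, so tIPS would not terminate at the tropical critical point $\hat{q}_1=(0,0,0,0)$. A clean proof of (ii) therefore requires an additional genericity hypothesis on $\hat{p}$ (equivalently, on the combinatorial type of the stable intersection contributing $\hat{q}^t$) ruling out such cancellations; this holds in particular for the tropicalization of the positive real MLE, to which classical IPS converges. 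The plan is to identify this hypothesis explicitly and argue that under it, the valuation bound is tight for every row, giving $\hat{r}^t = r$ and hence the fixed-point condition $\hat{q}^{t+1} = \hat{q}^t$.
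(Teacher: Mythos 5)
Part (i) of your plan is exactly the paper's argument (the paper simply says it is immediate from the definition): the increment $\alpha^{-1}A^T(r-\hat{r}^t)$ lies in the column span of $A^T$, which is $\row(A)$.

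For part (ii), the paper's entire proof is the one-line argument you reconstruct: ``taking the valuation on both sides of $A\hat{p}=Au$ yields $\hat{r}^t=r$.'' Your objection to that step is correct and substantive. Valuation of a sum only satisfies $\val\bigl(\sum_j a_{ij}\hat{p}_j\bigr)\geq\min_j\{\val(\hat{p}_j):a_{ij}\neq 0\}=\hat{r}^t_i$, so the identity $A\hat{p}=Au$ together with genericity of $u$ only delivers $\hat{r}^t\leq r$, with strict inequality whenever leading terms cancel. The paper itself exhibits this failure: in the tIPS example for the independent binary model, $\hat{q}^0=(0,0,0,0)$ \emph{is} the tropical critical point $\hat{q}_1=\val(\hat{p}_1)$ with $\hat{p}_1=(1+\alpha,-\alpha,-\alpha,\alpha)+\cdots$, yet $\hat{r}^0=(0,0,0)\neq(0,2,1)=r$ because $-\alpha+\alpha$ cancels in the second row, and the algorithm moves away from $\hat{q}_1$ and converges to $\hat{q}_2$ instead. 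So the gap you flag is a gap in the paper's proof (and, read literally, in the statement of part (ii)), not a deficiency of your attempt; the statement holds only for tropical critical points $\val(\hat{p})$ for which no cancellation occurs in $A\hat{p}$, e.g.\ the positive real MLE with $A\geq 0$, which is precisely the extra hypothesis your plan proposes to isolate. The one thing your write-up leaves undone is to actually state that hypothesis and record the resulting two-line equality argument, but that is a matter of finishing, not of a wrong idea.
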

\begin{proof}
\emph{i)} This follows immediately from the definition of the tIPS algorithm.

\emph{ii)} Let $\hat{p}\in cX_{A}\cap Y_{A,u}$ be a critical point. If $\hat{q}^t=\val(\hat{p})$ is the corresponding tropical critical point, then taking the valuation on both sides of $A\hat{p}=Au$ yields $\hat{r}^t=r$ as required.
\end{proof}
The following two examples show that tIPS converges to a tropical critical point in some but not all cases, and that convergence may depend on the specific matrix $A$ used to parametrize $\row(A')$.

\begin{example} We consider the independent binary random variable model from Example \ref{example: binary random variables - classical solution} and reparametrize matrix $A$ to have nonnegative entries and constant column sums $\alpha=2$:
$$
A=\begin{pmatrix}
2&1&1&0\\0&1&0&1\\0&0&1&1  
\end{pmatrix}\quad\text{~and~}\quad w=(0,2,1,4).
$$
Since $\row(A)$ is not changed by this reparametrization, the tropical critical points are the same as in Example \ref{example: binary random variables - classical solution}, and equal to $\hat{q}_1=(0,0,0,0)$ and $\hat{q}_2=(0,2,3,1)$. We now consider tIPS with initial vector $\hat{q}^0=(0,0,0,0)$. This gives $\hat{r}^0=(0,0,0)$ and we compute $r=(0,2,1)$ from the tropical data vector $w$. We obtain the sequence
$$
\hat{q}^0=\left(\begin{smallmatrix}   0\\0\\0\\0\end{smallmatrix}\right),\quad \hat{q}^1=\tfrac{1}{2} \left(\begin{smallmatrix}   0\\2\\1\\3\end{smallmatrix}\right),\quad \hat{q}^2=\tfrac{3}{4}\left(\begin{smallmatrix}   0\\2\\1\\3\end{smallmatrix}\right),\quad \dots\quad:\quad 
\hat{q}^t=\tfrac{2^t-1}{2^t} \left(\begin{smallmatrix}   0\\2\\1\\3\end{smallmatrix}\right).
$$
The tIPS algorithm converges to the tropical critical point $\hat{q}_2=(0,2,1,3)$.
\end{example}
\begin{example} We consider two parametrizations of the Hirzebruch surface in Example \ref{ex: Hirzebruch} as nonnegative matrices with constant column sum:
$$
A_1 = \begin{pmatrix}
4&3&3&2&1&0\\0&1&0&1&2&3\\0&0&1&1&1&1
\end{pmatrix} \quad\text{~and~}\quad A_2=\begin{pmatrix}
1&1&0&0&0&0\\1&0&3&2&1&0\\1&2&0&1&2&3
\end{pmatrix}.
$$
For $w=(6,8,7,6,4,0)$ and with initial vector $\hat{q}^0=(0,0,0,0,0,0)$, the tIPS converges to $\hat{q}_1=(16,12,12,8,4,0)$ for parametrization $A_1$ and to $\hat{q}_2=(10,6,12,8,4,0)$ for parametrization $A_2$. Here, $\hat{q}_2$ is a tropical critical point while $\hat{q}_1$ is not. 
\end{example}
\begin{question}
For which choices of $A,w,\hat{q}^0$ does tIPS converge to a tropical critical point?
\end{question}
%%%%

%%%%
\section{Configurations in general position} \label{section: general position}

At the start of this article, we formulated the general tropical toric maximum likelihood problem as finding the intersection points of a tropical affine space $L_{A,u}$ and a classical linear subspace $\row(A)$. We then narrowed the focus to describing certain `nice' cones $C_{\tau}\subset L_{A,u}$ and their intersections with $\row(A)$, which led to a full solution of the problem for certain instances of $A$ and $u$ in Theorem \ref{theorem: tropical critical points from triangulation}. 

In this section, we look in more detail at the conditions under which this solution strategy applies. As a first result in this direction, Theorem \ref{theorem: theorem 6 applies for full-dimensional cone} shows that for matrices $A$ with columns in general position, i.e. uniform $M(A)$, Theorem \ref{theorem: tropical critical points from triangulation} applies for a full-dimensional set of tropical data vectors $w$. This implies that for a practically relevant class of models $A$ and a full-dimensional, positive-measure subset of tropical data vectors, Theorem \ref{theorem: tropical critical points from triangulation} provides the solution to the tropical toric maximum likelihood problem. 

\begin{theorem}\label{theorem: theorem 6 applies for full-dimensional cone}
For every $A$, the set of tropical data vectors for which Theorem \ref{theorem: tropical critical points from triangulation} applies is the support of a polyhedral fan. If the columns of $A$ are in general position, then this fan contains an $(n-1)$-dimensional cone.
\end{theorem}
\begin{proof} 
We start by showing that the set of tropical data vectors for which Theorem \ref{theorem: tropical critical points from triangulation} applies is a polyhedral fan. First, note that the set of all tropical data vectors forms an $(n-1)$-dimensional polyhedral fan $\mathcal{B}\subset\R^n_{\geq 0}$ in the nonnegative orthant, with cells $\Theta= \{w\in\R^n\,:\, 0=w_{\theta(1)}\leq w_{\theta(2)}\leq\dots\leq w_{\theta(n)}\}$ given by vectors with a fixed order $\theta\in S_n$. Now fix a matrix $A$. The tropical data vectors that satisfy Theorem \ref{theorem: tropical critical points from triangulation} for $A$ with respect to a given triangulation $\Delta$ can be written as
\begin{equation}\label{eq: cone with conditions from Theorem 6 general}
\big\{w\in\mathcal{B}\,:\,A^T(A^T_{\tau})^{-1}w^{(\tau)}_{\tau}\geq w \text{~for all $\tau\in\Delta$} \big\}\subset\R^n_{\geq 0}.
\end{equation}
On any given cell $\Theta\subset\mathcal{B}$, the $\tau$-operator is linear since it is determined by the maximum or minimum of some entries of $w$, whose order is fixed on $\Theta$. The intersection of \eqref{eq: cone with conditions from Theorem 6 general} with $\Theta$ is thus a finite intersection of halfspaces whose boundary contains zero, which is a polyhedral cone. The set of all tropical data vectors for which Theorem \ref{theorem: tropical critical points from triangulation} applies is the union of these cones, ranging over all cells $\Theta\subset \mathcal{B}$ and all triangulations $\Delta$ of $Q_A$. This is the support of a polyhedral fan in $\R^n_{\geq 0}$, where the fan structure can be obtained as the common refinement of all the cones.

The dimension of any cone contained in the fan described above can be at most $(n-1)$, which is the dimension of $\mathcal{B}$. We now show that this maximum dimension is achieved when the columns of $A$ are in general position, in other words, when $M(A)$ is uniform. Let $\theta\in S_n$ be an order such that $\{\theta(i)\}_{i\leq r}$ is not a face of $Q_A$ for every $1<r\leq n$; such an order exists except if $n=k$, but in this case $Q_A$ is a simplex and Theorem \ref{theorem: tropical critical points from triangulation} applies for any tropical data vector. Now let $0=w_{\theta(1)}=\dots=w_{\theta(r)}<w_{\theta(r+1)}\leq\dots\leq w_{\theta(n)}$ and denote $\mathcal{O}(w)=\{\theta(i)\}_{i\leq r}$. If $1<r\leq n$, then since $\mathcal{O}(w)$ is not a face, by the first case of Theorem \ref{theorem: solution for uniform matroids} we know that Theorem \ref{theorem: tropical critical points from triangulation} applies for $w$ with respect to any regular triangulation of $Q_A$. It thus remains to treat the $r=1$ case, for which $\mathcal{O}(w)= \{\theta(1)\}$ is a face of $Q_A$. Here, we will follow a similar approach to the proof of the second case of Theorem \ref{theorem: solution for uniform matroids}: we show that there exists a constant $c>1$ such that if $w_{\theta(k)}\leq c\cdot w_{\theta(2)}$ then Theorem \ref{theorem: tropical critical points from triangulation} applies for $w$.

Let $\Delta$ be any regular triangulation of $Q_A$ that refines the subdivision $\Delta_{e_{\theta(1)}}$. The latter is called a pulling triangulation and by \cite[Lemma 4.3.10]{deloera_2010_triangulations} the cells $\delta\in\Delta_{e_{\theta(1)}}$ for which $\theta(1) \in \delta$ are of the form $\delta = \{\theta(1)\}\cup F$ where $F\subseteq[n]\setminus\{\theta(1)\}$ is a face of 
$\mathrm{conv}(a_i : i \in [n]\setminus \{\theta(1)\})$ that is `visible' from $\theta(1)$.  In other words, there exists an affine function $\psi$ on $\R^k$ which is negative on $a_{\theta(1)}$, zero on $\{a_i\}_{i\in F}$, and positive on all other points. Since the points $\{a_i\}_{i\in[n]\setminus\{\theta(1)\}}$ are in general position, the facets of their convex hull must be simplices and thus all maximal cells $\delta\in\Delta_{e_{\theta(1)}}$ that contain $\theta(1)$ are simplices.

We now check the conditions on $w$ such that every $\tau\in\Delta$ lies in a cell of $\Delta_{-w^{(\tau)}}$. If $\theta(1) \not \in \tau$ then $\tau$ has a basis exchange with $\theta(1)$, by uniformity, and thus $w^{(\tau)}=0$. This means that $\tau$ lies in a cell in the trivial subdivision $\Delta_{0}$. If $\theta(1) \in \tau$ then, as explained above, we know that $\tau$ is also a maximal simplex in $\Delta_{e_{\theta(1)}}=\Delta_{-e_{[n]\setminus\{\theta(1)\}}}$. For every $i\in\sigma = [n] \setminus \tau$, we thus have the strict inequality
$$
A_i^T(A^T_{\tau})^{-1}e_{\tau\setminus\{\theta(1)\}} > 1.
$$
Introducing the index sets $\mathcal{J}_+,\mathcal{J}_-$ as in the proof of Theorem \ref{theorem: solution for uniform matroids}, this can be rewritten as
$$
\sum_{j\in\mathcal{J}_+}(A^T(A^T_{\tau})^{-1})_{ij} + \sum_{j\in\mathcal{J}_-}(A^T(A^T_{\tau})^{-1})_{ij} > 1 \Longleftrightarrow c_{\tau,i}:= \frac{\sum_{j\in\mathcal{J}_+}(A^T(A^T_{\tau})^{-1})_{ij}}{1-\sum_{j\in\mathcal{J}_-}(A^T(A^T_{\tau})^{-1})_{ij}}>1.
$$
The conditions on $w$ such that $\tau$ lies in a cell of $\Delta_{-w^{(\tau)}}$, 
namely, $A_i^T(A^T_{\tau})^{-1}w^{(\tau)}_{\tau} \geq w^{(\tau)}_i$ for all $i\in\sigma$, can similarly be rewritten as
$$
\sum_{j\in\mathcal{J}_+} (A^T(A^T_{\tau})^{-1})_{ij}w^{(\tau)}_j + \sum_{j\in\mathcal{J}_-}(A^T(A^T_{\tau})^{-1})_{ij}w^{(\tau)}_j \geq w_i^{(\tau)}. 
$$
Since $w_{\theta(2)}\leq w^{(\tau)}_j\leq w_{\theta(k)}$ for all $j\in \tau\setminus\{\theta(1)\}$ by uniformity of $M(A)$, the above inequality is implied by the following stricter inequality
$$
w_{\theta(2)}\sum_{j\in\mathcal{J}_+} (A^T(A^T_{\tau})^{-1})_{ij} + w_{\theta(k)}\sum_{j\in\mathcal{J}_-} (A^T(A^T_{\tau})^{-1})_{ij}\geq w_{\theta(k)}, 
$$
or equivalently,
$ c_{\tau,i}\cdot w_{\theta(2)}\geq w_{\theta(k)}$. It thus suffices that $w_{\theta(k)}\leq c\cdot w_{\theta(2)}$ where $1<c\leq c_{\tau,i}$, to guarantee that $\tau$ lies in a cell of $\Delta_{-w^{(\tau)}}$ and that Theorem \ref{theorem: tropical critical points from triangulation} applies for $w$ with respect to $\Delta$. As a result, we have shown that for our choice of $\theta\in S_n$, there exists a constant $c>1$ such that Theorem \ref{theorem: tropical critical points from triangulation} applies for all tropical data vectors that satisfy $0=w_{\theta(1)}<w_{\theta(2)}\leq\dots w_{\theta(n)}$ and $w_{\theta(k)}\leq c\cdot w_{\theta(2)}$. This is an open $(n-1)$-dimensional cone inside $\Theta\subset \mathcal{B}$. If the inequality $w_{\theta(1)} \leq  w_{\theta(2)}$ is non-strict, then Theorem \ref{theorem: tropical critical points from triangulation} still applies due to the derivation above for the case where $r>1$, and thus it applies in particular to the full $(n-1)$-dimensional cone obtained by intersecting $\Theta$ with the halfspace given by the inequality $w_{\theta(k)}\leq c\cdot w_{\theta(2)}$. This completes the proof.
\end{proof}
The following example shows that if the columns of $A$ are not in general position, then it is possible that there exists no full-dimensional cone of tropical data vectors for which Theorem \ref{theorem: tropical critical points from triangulation} applies.
\begin{example} We consider the matrix $A$ and point configuration $Q_A$ below, and show that there does not exist any $(n-1)$-dimensional cone of tropical data vectors for which Theorem \ref{theorem: tropical critical points from triangulation} applies.
$$
A = \begin{pmatrix}
1&1&1&1&1&1\\
0&2&0&2&1&1\\
0&0&2&2&1&1\\
0&0&0&0&2&1
\end{pmatrix}\quad\quad\quad\quad\raisebox{-0.38\height}{\includegraphics[width=0.18\textwidth]{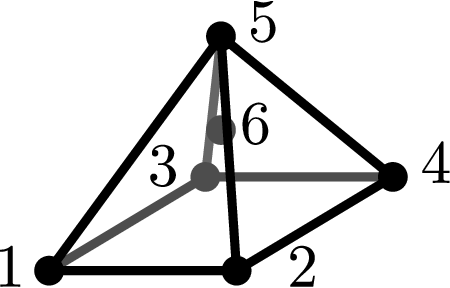}}
$$
Up to symmetry, there are two triangulations of $Q_A$ that need to be checked: $\Delta^{(1)} = \{1245,1345\}$ and $\Delta^{(2)}=\{1256,1356,2456,3456,1236,2346\}$, where the maximal simplices are listed. We confirm that there does not exist an $(n-1)$-dimensional cone by putting one entry of $w$ to zero and all other entries positive and distinct, and then showing that this leads to a failure of the condition that some $\tau\in\Delta$ lies in a cell of $\Delta_{-w^{(\tau)}}$, i.e., we show that $A_i^T(A^T_{\tau})^{-1}w^{(\tau)}_\tau< w^{(\tau)}_i$ for some $\tau\in\Delta$ and $i\in\sigma$.

For $\Delta^{(1)}$ we start with $w_6=0$. In this case we have that $w^{(\tau)}_{1456}=0$ for all $\tau$ and positive otherwise, which leads to a failure of the condition for $\tau=\{1245\}$ and $i=3$ since $A_3^T(A^T_{\tau})^{-1}w^{(\tau)}_\tau<0 <w^{(\tau)}_3$. For all other cases, where $w_6>0$, we note that $6$ lies in the interior of the simplex $145$ from which it follows that $w^{(\tau)}_6=\max(w^{(\tau)}_1,w^{(\tau)}_4,w^{(\tau)}_5)$ for all $\tau$. As a result, for $\tau=\{1245\}$ we find that $A_6^T(A^T_{\tau})^{-1}w^{(\tau)}_\tau<\max(w^{(\tau)}_1,w^{(\tau)}_4,w^{(\tau)}_5)=w^{(\tau)}_6$ whenever either of $w_1,w_2,w_5$ is equal to zero. By symmetry of the configuration, these are all the relevant cases for $\Delta^{(1)}$.

For $\Delta^{(2)}$ we start with $w_6=0$. In this case we find a failure for $\tau=\{1256\}$ and $i=3$ because $A_3^T(A^T_{\tau})^{-1}w^{(\tau)}_\tau<0<w^{(\tau)}_3$, which follows because $6$ lies in the interior of the simplex $235$, and $w^{(\tau)}_6=0$ and $w^{(\tau)}_{25}>0$. For $w_1=0$ and $\tau=\{2456\}$, we find $w^{(\tau)}_{1456}=0$ and $w^{(\tau)}_{23}>0$, which leads to a failure with respect to $i=3$ since $A_3^T(A^T_{\tau})^{-1}w^{(\tau)}_{\tau}<0<w^{(\tau)}_3$. For $w_5=0$ and $\tau=\{2346\}$, we find $w^{(\tau)}_{2356}=0$ and $w^{(\tau)}_{14}>0$, which leads to a failure with respect to $i=1$ since $A_1^T(A^T_{\tau})^{-1}w^{(\tau)}_{\tau}<0<w^{(\tau)}_1$. By symmetry of the configuration, these are all the relevant cases for $\Delta^{(2)}$. 
\end{example}
%%%%

%%%%

\bibliographystyle{abbrv}
\bibliography{bibliography.bib}
\bigskip
\bigskip
		
%\footnotesize
\noindent {\bf Authors' addresses:}
\smallskip

\noindent Emma Boniface, UC Berkeley \hfill \url{eboniface@berkeley.edu}

\noindent Karel Devriendt, MPI-MiS Leipzig \hfill \url{karel.devriendt@maths.ox.ac.uk}

\noindent Serkan Ho\c{s}ten, SFSU San Francisco \hfill 
\url{serkan@sfsu.edu}

\end{document}